\newtheorem{Teo}{Theorem}
\newtheorem{Main}{Main Theorem}
\newtheorem{Lema}[Teo]{Lemma}
\newtheorem{Prop}[Teo]{Proposition}
\newtheorem{Cor}[Teo]{Corollary}
\newtheorem{Def}[Teo]{Definition}
\newcommand{\h}{\mathrm{H}}
\newcommand{\calb}{\mathcal{B}}
\newcommand{\calc}{\mathcal{C}}
\newcommand{\cald}{{\mathcal D}}
\newcommand{\cale}{{\mathcal E}}
\newcommand{\calg}{\mathcal{G}}
\newcommand{\calf}{\mathcal{F}}
\newcommand{\calh}{\mathcal{H}}
\newcommand{\cali}{{\mathcal I}}
\newcommand{\calk}{{\mathcal K}}
\newcommand{\call}{{\mathcal L}}
\newcommand{\calo}{{\mathcal O}}
\newcommand{\calp}{{\mathcal P}}
\newcommand{\calq}{{\mathcal Q}}
\newcommand{\calr}{{\mathcal R}}
\newcommand{\calu}{{\mathcal U}}
\newcommand{\caly}{{\mathcal Y}}
\newcommand{\simto}{\stackrel{\sim}{\rightarrow}}
\newcommand{\lext}{{\mathcal E}{\it xt}}
\newcommand{\lend}{{\mathcal E}{\it nd}}
\newcommand{\lhom}{{\mathcal H}{\it om}}
\DeclareMathOperator{\Hom}{Hom}
\DeclareMathOperator{\Ext}{Ext}
\DeclareMathOperator{\Ph}{\mathrm \Pi}
\DeclareMathOperator{\coker}{coker}
\DeclareMathOperator{\rk}{{rk}}
\DeclareMathOperator{\im}{im}
\newcommand{\Ima}{\im}
\newcommand{\PP}{\mathbb{P}^3}
\newcommand{\op}[1]{{\mathcal O}_{\mathbb{P}^{#1}}}
\newcommand{\p}[1]{{\mathbb{P}^{#1}}}
\begin{document}

\begin{flushright}
\textbf{UDK 512.7}
\end{flushright}

\vspace{1cm}

\title[New moduli components of rank 2 bundles]{New moduli 
components of rank 2 bundles on projective space}

\author[C. Almeida]{Charles Almeida}
\address{ICEx - UFMG \\ 
Departamento de Matematica \\ 
Av. Antonio Carlos, 
6627, 30123-970 Belo Horizonte MG, Brazil}
\email{charlesalmeida@mat.ufmg.br}

\author[M. Jardim]{Marcos Jardim}
\address{IMECC - UNICAMP \\
Departamento de Matem\'atica \\ 
Rua S\'ergio  Buarque de Holanda, 651  \\
13083-970 Campinas-SP, Brazil}
\email{jardim@ime.unicamp.br}

\author[A. Tikhomirov]{Alexander Tikhomirov}
\address{Faculty of Mathematics\\
National Research University  
Higher School of Economics\\
6 Usacheva Street\\ 
119048 Moscow, Russia}
\email{astikhomirov@mail.ru}

\author[S. Tikhomirov]{Sergey Tikhomirov}
\address{Department of Mathematics\\
Yaroslavl State Pedagogical University named after K.D.Ushinskii\\
108 Respublikanskaya Street\\ 
150000 Yaroslavl, Russia}

\begin{abstract}
We present a new family of monads whose cohomology is a 
stable rank two vector bundle on $\PP$.  We also study the 
irreducibility and smoothness together with a geometrical 
description of some of these families. 
These facts are used to construct a new infinite series of 
rational moduli components of stable rank two vector 
bundles with trivial determinant and growing second Chern
class. We also prove that the moduli space of stable rank 
two vector bundles with trivial determinant and second Chern 
class equal to 5 has exactly three irreducible rational components. 
 
\noindent{\bf 2010 MSC:} 14D20, 14J60

\noindent{\bf Keywords:} Rank 2 bundles, Monads, Instanton bundles
\end{abstract}

\maketitle

\section{Introduction}\label{Intro}

In \cite{Maruyama} Maruyama proved that the rank $r$ stable 
vector bundles on a projective scheme $X$ with fixed Chern 
classes $c_1,...,c_r $ can be parametrized by an algebraic 
quasi-projective variety, denoted by 
$\mathcal{B}_{X}(r,c_1,...,c_r)$. Although this result has 
been known for almost 40 years, there are just a few 
concrete examples and established facts about such 
varieties, even for cases like $X = \PP$ and $r=2$. For 
instance, $\mathcal{B}_{\PP}(2,0,1)$ was studied by  Barth 
in \cite{Barth1}, $\mathcal{B}_{\PP}(2,0,2)$ was described 
by Harthorne in \cite{Hart1}, $\mathcal{B}_{\PP}(2,-1,2)$ 
studied by Harthorne and Sols in \cite{Hart2} and by 
Manolache in \cite{M}, while $\mathcal{B}_{\PP}(2,-1,4)$ was 
described by Banica and Manolache in \cite{Banica}. This 
probably happened due to the fact that the questions of 
irreducibility (solved in \cite{Tikhomirov1} and 
\cite{Tikhomirov2}), and smoothness (answered in 
\cite{jardim2014trihyperkahler}) of the so-called 
\emph{instanton component} of the moduli space 
$\mathcal{B}_{\PP}(2,0,c_2)$ for all $c_2\in\mathbb{Z}_+$ remained open until 2014.

In this paper, we continue the study of the moduli space 
$\mathcal{B}_{\PP}(2,0,n)$, which we will simply denote by 
$\calb(n)$ from now on, with the goal of providing new 
examples of families of vector bundles, and understanding 
their geometry. It is more or less clear from the table in
\cite[Section 5.3]{hartshorne1991} that $\calb(1)$ and 
$\calb(2)$ should be irreducible, while $\calb(3)$ and 
$\calb(4)$ should have exactly two irreducible components; 
see \cite{ES} and \cite{C}, respectively, for the proof of 
the statements about $\calb(3)$ and $\calb(4)$. As for 
$\calb(5)$, a description of all its irreducible components
had been a challenge since 1980ies. In the paper, we give a 
complete answer to this problem - see Main Theorem \ref{MT2}. 

For $n\ge5$, two families of irreducible components have been 
studied, namely the \emph{instanton components}, whose generic 
point corresponds to an instanton bundle, and the \emph{Ein 
components}, whose generic point corresponds to a bundle 
given as cohomology of a monad of the form
\[
0 \to \mathcal{O}_{\mathbb{P}^3}(-c) \to 
\mathcal{O}_{\mathbb{P}^3}(-b) \oplus 
\mathcal{O}_{\mathbb{P}^3}(-a) \oplus 
\mathcal{O}_{\mathbb{P}^3}(a) \oplus 
\mathcal{O}_{\mathbb{P}^3}(b) 
\to \mathcal{O}_{\mathbb{P}^3}(c) \to 0
\]
where $b\ge a\ge 0$ and $c>a+b$. All of the components of 
$\calb(n)$ for $n\le4$ are of either of these types; here we 
focus on a new family of bundles that appear as soon as 
$n\ge5$.

More precisely, we study the set of vector bundles in 
$\calb(a^2+k)$ for each $a\ge2$ and $k\ge1$ which arise as 
cohomologies of monads of the form: 
\begin{equation}\label{Monad Gak}
0 \to \op3(-a) \oplus V_k\otimes\op3(-1) \to 
V_{2k+4}\otimes \op3 \to 
V'_k\otimes\op3(1) \oplus \op3(a) \to 0
\end{equation}
which will be denoted by $\mathcal{G}(a,k)$. We provide a 
bijection between such monads and monads of the form:
$$ 0 \to \op3(-a) \xrightarrow{\sigma} \tilde{E} 
\xrightarrow{\tau} \op3(a) \to 0 $$
where $\tilde{E}$ is a symplectic rank 4 instanton bundle of 
charge $k$. When $k=1$, these facts are used to prove our 
first main result. (See Theorem \ref{Thm 20} below.)

\begin{Main}\label{MT1}
For each $a\ge2$ not equal to 3, $\mathcal{G}(a,1)$ is a 
nonsingular dense subset of a rational irreducible 
component of $\calb(a^2+1)$ of dimension
$$ 4\cdot {{a+3}\choose 3}-a-1. $$
\end{Main}

Our second main result provides a complete description of all 
the irreducible components of $\calb(5)$.

\begin{Main} \label{MT2}
The moduli space $\calb(5)$ has exactly 3 rational irreducible 
components, namely:
the \emph{instanton component}, of dimension 37, which is
nonsingular and consists of those bundles given as cohomology of 
monads of the form 
\begin{equation}\label{invariant011}
0 \to V_{5}\otimes\op3(-1) \to V_{12}\otimes\op3 \to 
V_{5}\otimes\op3(1) \to 0 , ~~{\rm and}
\end{equation}
\begin{equation}\label{invariant021}
0 \to  V_{2}\otimes\mathcal{O}_{\PP}(-2) \to 
V_{3}\otimes\op3(-1) \oplus V_{3}\otimes\op3(1) \to 
V_{2}\otimes\op3(2) \to 0;
\end{equation}
the \emph{Ein component}, nonsingular of dimension 40, which 
consists of those bundles given as cohomology of monads of 
the form
\begin{equation}\label{invariant111}
0 \to \op3(-3) \to \op3(-2) \oplus V_{2}\otimes\op3 \oplus 
\op3(2) \to \op3(3) \to 0;
\end{equation}
the closure of the set $\calg(2,1)$, of dimension 
$37$, which consists of those bundles given as cohomology of 
monads of the form 
\begin{equation} \label{invariant121}
0\to\op3(-2)\oplus\op3(-1)\to V_{6}\otimes\op3\to \op3(1)\oplus\op3(2)
\to0 
\end{equation}
and
\begin{equation}\label{invariant131}
0\to\op3(-2)\oplus V_{2}\otimes\op3(-1)\to\op3(-1)\oplus V_{6}\otimes
\op3\oplus\op3(1)\to V'_{2}\otimes\op3(1)\oplus \op3(2) \to 0. 
\end{equation}
\end{Main}

Hartshorne and Rao proved in \cite{hartshorne1991} 
that every stable rank 2 bundle\ $\cale$ on $\p3$ with Chern 
classes $c_1(\cale)=0$ and $c_2(\cale)=5$ is the cohomology of 
one of the monads listed above. Rao showed in \cite{Rao1987} 
that bundles given as cohomology of monads of the form 
(\ref{invariant021}) lie in the closure of the family of 
instanton bundles of charge 5, which was shown to be 
irreducible firstly by Coanda, Tikhomirov and Trautmann in 
\cite{CTT}; see also \cite{Tikhomirov1}. The irreducibility 
of the family of bundles which arise as cohomology of monads 
of the form  (\ref{invariant111}) was established by Ein in 
\cite{Ein}.

The fact that the closure of $\calg(2,1)$ is an irreducible 
rational component of $\calb(5)$ is the particular case $a=2$
of Main Theorem \ref{MT1}. Finally, we show that the set of 
bundles given by the monads of the form (\ref{invariant131}) 
lies in the closure of $\calg(2,1)$.
 
We now give a breaf sketch of the contents of the paper. In 
Section \ref{section 2} we recall some general properties of 
monads and of symplectic instanton bundles on $\p3$. We especially
treat the rank 4 symplectic instantons of charge 1. Any such 
bundle $E$ is described as a middle term of an exact triple with
a rank 2 trivial bundle at the left hand and a null correlation 
rank 2 sheaf at the right hand. In Section \ref{modif} we study
the set $\calg(a,k)$ of (the isomorphism classes of) the so-called 
modified instanton bundles which are rank 2 bundles that arise as 
cohomology bundles of monads of the form \eqref{Monad Gak} with 
$a\ge2$ and $k\ge1$. We show that each modified instanton appears as 
cohomology bundle of a monad of the form 
\begin{equation}\label{symplectic E}
0\to\op3(-a)\to E\to\op3(a)\to0
\end{equation}
where $E$ is a rank 4 symplectic instanton of charge $k$. In 
the case $k=1$, this relation will be essential for the further 
constructions.

In Section \ref{G(a,1)} we study the set $\calg(a,1)$. We construct 
three families of symplectic monads of the form \eqref{symplectic E}. 
The first one is the universal family, with the base scheme $S$, of 
monads with $E$ splitting as $E=\op3^{\oplus2}\oplus N$ where $N$ is a
null correlation bundle. The second is a family, with the 
base scheme $\widetilde{S}$ containing $S$ as a dense open subset, of 
monads $E$ a general symplectic rank 4 instanton of charge 1. The 
third is a family of monads with $E$ splitting as in the first one, 
but with a new base $Y$. All the three families inherit universal
cohomology sheaves, and it is shown that the images of their 
corresponding modular morphisms to $\calb(a^2+1)$ have the same 
closure $\overline{\calg(a,1)}$ - see Propositions 
\ref{descrn of G(a,1)} and \ref{Prop 15}. In Section \ref{Rationality}
the three families mentioned above are used to prove the Main Theorem
1 - see Theorem \ref{Thm 20}.

Sections \ref{F related to E} and \ref{properties of F} are devoted
to the study of the monads of the form \eqref{invariant131}. In 
Section \ref{F related to E} we show that the cohomology sheaves
$\cale$ of
those among such monads that are not reduced to the monads of the form
\eqref{invariant121} are closely related (by two subsequent elementary
transformations - see Proposition \ref{two steps}) to rank 2 reflexive
sheaves with Chern classes $(0,2,2k)$, $0\le k\le3$. A complete
classification of the moduli components of these reflexive sheaves
performed in Section \ref{properties of F} - see Propositions 
\ref{Prop F unstable} and \ref{Prop F stable} - leads to the dimension
estimate, given in Theorem \ref{Cal H}, for the subset of the bundles 
$\cale$ specified above. It follows that this subset is not a 
component of $\calb(5)$, and we use this in Section \ref{B(5)} to
prove the Main Theorem 2.

{\bf Acknowledgements.}
CA was supported by the FAPESP grant number 
2014/08306-4, 2016/14376-0 and PNPD post-doctoral grant by 
the IME-USP from the University of S\~ao Paulo. This work 
was partially done during a visit to the 
University of Barcelona, and he is grateful for its 
hospitality. MJ is partially supported by the CNPq grant 
number 302889/2018-3 and the FAPESP Thematic Project number 
2018/21391-1; this work was partially done during a visit to 
the University of Edinburgh with the support of the FAPESP 
grant number 2016/03759-6. 
AST worked on this article within the framework of the 
Academic Fund Program at HSE University in 2020-2021 
(grant number 20-01-011) and within the framework of the 
Russian Academic Excellence Project ``5-100". AST also 
acknowledges the support from the Max Planck Institute for 
Mathematics in Bonn, where part of this work was done during 
the winter of 2020.

{\bf Notation and Conventions.}
\begin{itemize}
\item In this work, $\mathbf{k}$ is an algebraically closed 
field of characteristic zero. 

\item $V_n$, respectively, $U_n$ denotes a $\mathbf{k}$-vector 
space of dimension $n$. 

\item $\langle v\rangle$ the 1-dimensional subspace of $V_n$
spanned by a nonzero vector $v\in V_n$.

\item $\mathbf{P}(F):=\mathrm{Proj}(\mathrm{Sym}^{\bullet}
_{\calo_X}F)$ the projective spectrum of $F$, for a coherent 
$\calo_X$-sheaf $F$ on a given scheme $X$.  

\item $\calo_{\mathbf{P}(F)}(1)$ the Grothendieck sheaf on
$\mathbf{P}(F)$.

\item $\mathbf{V}(F):=\mathrm{Spec}(\mathrm{Sym}^{\bullet}
_{\calo_X}F)$, for $X$ and $F$ as above.  

\item $\p3:=P(U_4)$ the projective 3-space.

\item $\mathbf{Isom}(V_n\otimes\calo_X,F)\to X$ the 
principal $GL(n,\mathbf{k})$-bundle of frames of a rank $n$ 
locally free $\calo_X$-sheaf $F$.

\item $\mathbf{X}:=\p3\times X$, for a given scheme $X$.

\item $p_X:\mathbf{X}\to X$ the projection onto the second 
factor, for $\mathbf{X}$ and $X$ as above.

\item $\mathbf{f}:\mathbf{X}\to\mathbf{Y}$ the morphism 
induced by the morphism of schemes $f:X\to Y$.

\item $F_X:=f^*F$, $\varphi_X:=f^*\varphi:F_X\to G_X$, 
$\mathbf{E}_{\mathbf{X}}:={\mathbf{f}}^*\mathbf{E}$, for a 
given $\calo_Y$-sheaf $F$, a given morphism $\varphi:F\to G$ 
of $\calo_Y$-sheaves, a given $\calo_{\mathbf{Y}}$-sheaf (or, 
a complex of sheaves) $\mathbf{E}$ , and $f:X\to Y$ and 
$\mathbf{f}:\mathbf{X}\to\mathbf{Y}$ as above.

\item $\mathbf{E}(a,0):=\mathbf{E}\otimes\op3(a)\boxtimes
\calo_X$, for $X$ and $\mathbf{E}$ as above, and $a\in
\mathbb{Z}$.

\item $X\xleftarrow{g_X}X\times_ZY\xrightarrow{f_Y}Y$
the projections of the fibre product $X\times_ZY$ induced by
the morphisms $X\xrightarrow{f}Z\xleftarrow{g}Y$.

\item $H^i(F)$ the $i$-th cohomology group of the sheaf $F$ 
on $\p3$.

\item $Gr(n,V_k)$ the grassmannian variety of 
$n$-dimensional subspaces of $V_k$.

\item Variety means an integral (i. e., reduced and irreducible)
scheme.

\item Since we are working with rank 2 vector bundles on 
$\PP$, and Gieseker stability is equivalent to 
$\mu-$stability, we will not make any distinction between 
these two concepts.

\item We will not make any distinction between vector 
bundles and locally free sheaves.

\item $[E]$ the isomorphism class of a given sheaf on $\p3$; 
in case $\cale$ is a rank 2 stable sheaf on $\p3$, $[\cale]$ 
is also considered as a point in the moduli space $M$ of 
stable rank 2 sheaves on $\p3$.

\item $\Phi_X:X\to M,\ x\mapsto[\mathbf{E}|_{\p3\times\{x\}
}]$ the morphism defined by the $\calo_{\mathbf{X}}$-sheaf 
$\mathbf{E}$ which is a family of stable rank 2 vector bundles 
on $\p3$ with base $X$, for $M$ as above. We call $\Phi_X$ 
the \textit{modular morphism defined by the family} 
$\mathbf{E}$.

\item $\calr(e,n,m)$ the set of isomorphism classes of rank 2
reflexive sheaves on $\p3$ with Chern classes $(c_1,c_2,c_3)=
(e,n,m)$.

\item $\ell(Y):=h^0(\calo_Y)$ the length of a 0-dimensional 
scheme $Y$.

\item $\h^{1}_{*}(E)=\bigoplus_{i\in\mathbb{Z}}\h^1(E(i))$ the 
graded cohomology module over the graded ring $\Gamma_*(\op3):=
\bigoplus_{j\ge0}\h^0(\op3(j))$.

\item $(s)_0:=\{x\in X|s(x)=0\}$ the scheme of zeroes of a  
section $s$ of a given vector bundle on a scheme $X$.

\item $\mathrm{Sp}(\cale)$ the spectrum of a vector bundle 
$[\cale]\in\calb(5)$, i. e., the nondecreasing sequence of 
integers $(a_1,a_2,a_3,a_4,a_5)$ uniquely defined by $\cale$ - 
see \cite{BE}, \cite[Section 7]{Hartreflexive}.

\item All the commutative diagrams of sheaves below which do 
not contain monads are assumed to have exact rows and columns. 
In these diagrams, the arrows $F\rightarrowtail G$, resp.,
$F\twoheadrightarrow G$ are shortenings for $0\to F\to G$, 
resp., $F\to G\to0$.
\end{itemize}

\section{Monads and symplectic instanton bundles}\label{section 2}

Recall that a monad is a complex of vector bundles of the 
form:
\begin{equation}\label{example0}
\xymatrix{
0\ar[r] & A\ar[r]^{\alpha} & B \ar[r]^{\beta} & C\ar[r] & 0} \end{equation}
such that $\alpha$ is injective, and $\beta$ is surjective. 
We call the sheaf $E := \ker \beta / \Ima \alpha $ the 
cohomology of the monad (\ref{example0}). When $\alpha$ is 
locally left invertible (i. e., it is a subbundle morphism), then $E$ is a vector bundle.

The notion of monad is important in the study of vector 
bundles on $\PP$ because Horrocks proved in \cite{Ho} that 
every vector bundle on $\PP$ is cohomology of a monad of the 
form (\ref{example0}) with $A$, $B$ and $C$ being sums of 
line bundles.

For completeness, we include in this section some useful 
results about monads that will be required in this work. The 
following lemma gives a relation between isomorphism classes 
of monads and its cohomology vector bundles; a proof can be 
found in \cite[Lemma 4.1.3]{okonek}.

\begin{Lema}\label{okonek}
Let $E$ and $E^{\prime}$ be, respectively, cohomology of the 
following monads:
\begin{equation}\label{vector1}
\xymatrix{
M:\ \ \  0\ar[r] & A \ar[r]^{a} & B \ar[r]^{b} & C \ar[r] & 0},
\end{equation}
\begin{equation}\label{vector2}
\xymatrix{
M':\ \ \ 0\ar[r] & A'\ar[r]^{a'} & B'\ar[r]^{b'} & C'\ar[r] &0}.
\end{equation}
If one has that 
$\mathrm{Hom}(B,A^{\prime}) = \mathrm{Hom}(C,B^{\prime}) = 
\Ext^1(C, A^{\prime})=\Ext^{1}(B,A^{\prime}) = 
\Ext^{1}(C, B^{\prime})= \Ext^{2}(C,A^{\prime})=0,$
then there exists a bijection between the set of all 
morphisms from $E$ to $E^{\prime}$ and the set of all 
morphisms of monads from (\ref{vector1}) to (\ref{vector2}). 
\end{Lema}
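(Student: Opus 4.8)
The plan is to prove the bijection by constructing both directions explicitly from the \emph{display} of a monad, keeping careful track of which of the six vanishing hypotheses controls which step. Recall that to a monad $M: A \xrightarrow{a} B \xrightarrow{b} C$ with cohomology $E$ one attaches the commutative diagram with exact rows and columns built from $K := \Ker b$ and $Q := \coker a$, namely $0 \to A \to K \xrightarrow{\pi} E \to 0$, $0 \to K \xrightarrow{j} B \xrightarrow{b} C \to 0$, $0 \to A \xrightarrow{a} B \xrightarrow{p} Q \to 0$ and $0 \to E \xrightarrow{\iota} Q \xrightarrow{\bar b} C \to 0$, with $a$ factoring through $K \hookrightarrow B$ and $b$ factoring through $B \twoheadrightarrow Q$; write $K', Q', \pi', j', p', \iota', \bar b'$ for the analogous data of $M'$. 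Since $\h^i(X, G^\vee \otimes F) = \Ext^i_X(G,F)$ when $G$ is locally free, the hypotheses read $\Hom(B,A') = \Hom(C,B') = \Ext^1(C,A') = \Ext^1(B,A') = \Ext^1(C,B') = \Ext^2(C,A') = 0$. The forward map is the obvious one: a morphism of monads $(f_A,f_B,f_C)$ satisfies $b'f_B = f_C b$, so $f_B(K) \subseteq K'$, and $a'f_A = f_B a$, so this restriction descends to a morphism $\bar f: E \to E'$; functoriality is clear, and the content of the lemma is that $(f_A,f_B,f_C) \mapsto \bar f$ is a bijection. One also records the identity $\iota'\bar f\pi = p' f_B j$, obtained from the display relations $p'|_{K'} = \iota'\pi'$ and $\pi' (f_B|_K) = \bar f\pi$.

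For \textbf{injectivity}, suppose $\bar f = 0$. The identity above gives $p'\circ f_B\circ j = \iota'\bar f\pi = 0$, so $f_B\circ j$ factors through $\Ker p' \cong A'$, say $f_B j = a'\circ h$ with $h: K \to A'$. Applying $\Hom(-,A')$ to $0 \to K \xrightarrow{j} B \to C \to 0$ and using $\Hom(B,A') = \Ext^1(C,A') = 0$ forces $\Hom(K,A') = 0$, hence $h = 0$ and $f_B j = 0$; thus $f_B = t\circ b$ for some $t: C \to B'$. Then $a'f_A = f_B a = t b a = 0$ and $a'$ injective give $f_A = 0$, while $f_C b = b' f_B = b' t b$ and $b$ surjective give $f_C = b' t$. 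Finally $\Hom(C,B') = 0$ gives $t = 0$, so $f_C = 0$ and $f_B = 0$. (Equivalently: the two $\Hom$-vanishings say a null-homotopic morphism of monads is zero, and the above shows $\bar f = 0$ forces $(f_A,f_B,f_C)$ null-homotopic.)

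For \textbf{surjectivity}, given $\phi: E \to E'$ set $u := \iota'\circ\phi\circ\pi: K \to Q'$ and lift it in two stages. First, from $0 \to A' \to B' \xrightarrow{p'} Q' \to 0$ and $\Ext^1(K,A') = 0$ — which follows from $\Ext^1(B,A') = \Ext^2(C,A') = 0$ applied to $0 \to K \to B \to C \to 0$ — one obtains $w: K \to B'$ with $p'w = u$. Second, from $0 \to K \xrightarrow{j} B \to C \to 0$ and $\Ext^1(C,B') = 0$ one extends $w$ to $f_B: B \to B'$ with $f_B j = w$. Now $p'(f_B a) = p' w (A\to K) = u(A\to K) = \iota'\phi\,\pi(A\to K) = 0$, so $f_B a$ factors through $\Ker p'\cong A'$, giving a unique $f_A: A \to A'$ with $a'f_A = f_B a$; and $b' f_B j = \bar b'p'w = \bar b' u = \bar b'\iota'\phi\pi = 0$, so $b'f_B$ kills $\Ker b = K$ and factors as $f_C b$ for a unique $f_C: C \to C'$. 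Thus $(f_A,f_B,f_C)$ is a morphism of monads, and $\iota'\bar f\pi = p'f_B j = p'w = u = \iota'\phi\pi$ yields $\bar f = \phi$.

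\textbf{Main obstacle.} There is no geometric difficulty — everything is a diagram chase in the display — so the real task is the bookkeeping of which $\Ext$ or $\Hom$ group governs each of the six elementary steps. The point requiring attention is that the auxiliary sheaves $K = \Ker b$ and $Q' = \coker a'$ appear, and the relevant vanishings $\Ext^1(K,A') = 0$ (for the first lift in surjectivity) and $\Hom(K,A') = 0$ (in injectivity) are not hypotheses but are \emph{forced} by the stated ones through the horizontal sequences of the displays; getting this dictionary exactly right is the only place one can slip.
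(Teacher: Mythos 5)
Your proof is correct and complete: both directions of the bijection are established by a display-based diagram chase, and each of the six vanishing hypotheses is invoked exactly where it is needed (including the derived vanishings $\Hom(K,A')=0$ and $\Ext^1(K,A')=0$, which you correctly extract from the horizontal display sequence). The paper itself offers no proof, citing Okonek--Schneider--Spindler, Lemma 4.1.3; your argument is essentially the one given there, so nothing further is required.
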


The following important corollary will be used several times 
in what follows, and a proof can also be found in 
\cite[Lemma 4.1.3, Corollary 2]{okonek}.

\begin{Cor}\label{Okoneksimpletico}
Consider the monad $M$ and its dual monad $M^{\vee}$, where: 
$$
\xymatrix{M:\ \ \ 0\ar[r]& A \ar[r]^{a} & B \ar[r]^{b} & C \ar[r] & 
0},\ \ \ \ \ \ \ \ \ \ \ \ \ \ \ \ \ \ \ \ \xymatrix{
M^{\vee}:\ \ \ 0\ar[r] & C^{\vee} \ar[r]^{b^{\vee}} & B^{\vee} 
\ar[r]^{a^{\vee}} & A^{\vee} \ar[r] & 0.}
$$
If these monads satisfy the hypothesis of Lemma 
\ref{okonek}, and there exists an isomorphism $f : E \to 
E^{\vee}$ between its cohomology bundles such that $f^{\vee} 
=-f$, then there are isomorphisms $h: C \to A^{\vee}$, and 
$q: B\to B^{\vee}$, such that $q^{\vee} = -q$, and $h\circ b 
=a^{\vee}\circ q.$
\end{Cor}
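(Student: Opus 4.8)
The plan is to lift the symplectic isomorphism $f\colon E\to E^\vee$ to an isomorphism of monads $M\to M^\vee$ and then read off $h$ and $q$ from its components, using the uniqueness part of Lemma \ref{okonek} (``two monad morphisms inducing the same map on cohomology coincide'') to force the symmetry $q^\vee=-q$. Since the monads $M$ and $M^\vee$ satisfy the stated vanishing hypotheses, Lemma \ref{okonek} produces a unique morphism of monads
$$\psi=(\phi_A,\phi_B,\phi_C)\colon M\longrightarrow M^\vee, \qquad \phi_A\colon A\to C^\vee,\ \ \phi_B\colon B\to B^\vee,\ \ \phi_C\colon C\to A^\vee,$$
with $\phi_B\circ a=b^\vee\circ\phi_A$ and $\phi_C\circ b=a^\vee\circ\phi_B$, inducing $f$ on cohomology. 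Setting $h:=\phi_C$ and $q:=\phi_B$, the right-hand commuting square is exactly the required relation $h\circ b=a^\vee\circ q$.

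Next I would show that $\psi$ is an isomorphism of monads. Since $f$ is invertible and the vanishing hypotheses of Lemma \ref{okonek} also apply to the ordered pair $(M^\vee,M)$, the inverse $f^{-1}\colon E^\vee\to E$ lifts to a morphism of monads $\chi\colon M^\vee\to M$. The compositions $\chi\circ\psi$ and $\psi\circ\chi$ induce $f^{-1}\circ f=\mathrm{id}_E$ and $f\circ f^{-1}=\mathrm{id}_{E^\vee}$ on cohomology; because the bijection of Lemma \ref{okonek} is functorial (compatible with composition and sending identity morphisms of monads to identity morphisms of cohomology), uniqueness forces $\chi\circ\psi=\mathrm{id}_M$ and $\psi\circ\chi=\mathrm{id}_{M^\vee}$. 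Hence $\phi_A$, $\phi_B$, $\phi_C$ are all isomorphisms, so in particular $h\colon C\to A^\vee$ and $q\colon B\to B^\vee$ are isomorphisms.

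Finally, I would obtain $q^\vee=-q$ by dualizing $\psi$. Dualization of a monad of vector bundles is compatible with passage to cohomology, in the sense that $(M^\vee)^\vee=M$ and there is a natural identification of the cohomology of $M^\vee$ with $E^\vee$; therefore $\psi^\vee\colon (M^\vee)^\vee=M\to M^\vee$ is again a morphism of monads, and it induces $f^\vee\colon (E^\vee)^\vee=E\to E^\vee$ on cohomology. By hypothesis $f^\vee=-f$, which is also the map induced on cohomology by $-\psi$; uniqueness in Lemma \ref{okonek} then yields $\psi^\vee=-\psi$. Comparing middle components gives $\phi_B^\vee=-\phi_B$, i.e. $q^\vee=-q$ (and, consistently, $\phi_C^\vee=-\phi_A$).

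The main obstacle is the bookkeeping surrounding Lemma \ref{okonek}: one must verify that its bijection is genuinely functorial, so that the uniqueness arguments in the last two steps are legitimate, and that the vanishing conditions hold in all the configurations actually used — for $(M,M^\vee)$ and $(M^\vee,M)$ to lift $f$ and $f^{-1}$, and for $(M,M)$ and $(M^\vee,M^\vee)$ to identify the compositions with the identities. One must also make precise the natural isomorphism between the cohomology of the dual monad and the dual of the cohomology, and its compatibility with morphisms of monads; granting this, the three steps above assemble into the claim.
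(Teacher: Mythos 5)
Your argument is correct and is essentially the standard proof of this statement, which the paper does not reprove but simply cites from Okonek--Schneider--Spindler (Lemma 4.1.3, Corollary 2): lift $f$ to a monad morphism via Lemma \ref{okonek}, lift $f^{-1}$ to obtain invertibility, and dualize together with the uniqueness in Lemma \ref{okonek} to get $\psi^\vee=-\psi$, hence $q^\vee=-q$. The bookkeeping you flag (that the vanishing hypotheses must hold for the ordered pairs $(M,M^\vee)$, $(M^\vee,M)$, $(M,M)$, $(M^\vee,M^\vee)$) is exactly the right caveat and is satisfied in all the situations where the paper invokes the corollary.
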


Recall that every locally free sheaf $E$ on $\p3$ is the 
cohomology of a monad of the form \cite{Ho}:

\begin{equation}\label{monad-horrocks}
0 \to \oplus_{i = 1}^{r} \mathcal{O}_{\mathbb{P}^3}(a_i) \to 
\oplus_{j = 1}^{s} \mathcal{O}_{\mathbb{P}^3}(b_j) \to 
\oplus_{k = 1}^{t} \mathcal{O}_{\mathbb{P}^3}(c_k) \to 0.
\end{equation}

In this work we will be interested in rank 2 locally free 
sheaves with vanishing first Chern class. Under these 
conditions, we have $E^{\vee} \simeq E$, and this implies that $t = 
r$, $s=2r+2$, and $\{a_i\} = \{-c_k\}$. In addition, the 
middle entry of the monad is also self dual, so that 
(\ref{monad-horrocks}) reduces to
$$
0 \to \oplus_{i = 1}^{r} \mathcal{O}_{\mathbb{P}^3}(a_i) \to 
\oplus_{j = 1}^{r+1} \left( 
\mathcal{O}_{\mathbb{P}^3}(b_j)\oplus 
\mathcal{O}_{\mathbb{P}^3}(-b_j) \right) \to 
\oplus_{i = 1}^{r} \mathcal{O}_{\mathbb{P}^3}(-a_i) \to 0.
$$
Finally, recall also that $r$ coincides with the number of 
generators of 
$\h^{1}_{*}(E) = \bigoplus_{p \in \mathbb{Z}} \h^{1}(E(p))$ 
as a graded module over the ring of homogeneous polynomials 
in four variables, while $a_i$ are the degrees of these 
generators, cf. \cite[Thm. 2.3]{JardimSteiner}.


Instanton bundles are a particularly important class of stable 
rank 2 vector bundles due to their many remarkable properties and 
applications in mathematical physics. Besides this, instanton 
bundles form the only known irreducible component of the moduli 
space $\mathcal{B}(c)$ for every $c\in\mathbb{N}$.

In the remaining part of this section we will present the main 
results concerning instanton sheaves that will be used below. We 
start by recalling the definition of instanton sheaves on $\p3$; 
see \cite[Introduction]{Jardim2006} for further information on 
these objects.

\begin{Def}\label{Def instanton}
An \emph{instanton sheaf} on $\mathbb{P}^3$ is a torsion 
free coherent sheaf $E$ with $c_1(E)=0$ satisfying the 
following cohomological conditions:
\begin{equation}\label{inst vanish}
h^0(E(-1))=h^1(E(-2))=h^{2}(E(-2))=h^3(E(-3))=0. 
\end{equation}
The integer $n:=c_2(E)$ is called the \emph{charge} of $E$. 
When $E$ is locally free, we say that $E$ is an 
\emph{instanton bundle}.
\end{Def}

We remark that instanton bundles of rank $r>2$ and non 
locally free instanton sheaves of rank $r\ge2$ on $\p3$ are 
not $\mu$-semistable in general, and also the vanishing of 
$h^1(E(-2))$ does not imply the vanishing of $h^2(E(-2))$. 
The definition above is the right generalization of the 
usual definition of an instanton vector bundle in the sense 
that, applying the Beilinson spectral sequence \cite[Ch. II, 
Thm. 3.1.4]{okonek}
\begin{equation}\label{Beilinson}
\mathrm{E}_1^{pq}=H^q(E(-p-1)\otimes\Omega_{\p3}^{-p})\otimes
\op3(p+1)\Rightarrow\mathrm{E}^{p+q}_{\infty}=
\left\{
\begin{array}{cc}
E, & \ p+q=0, \\
0, & \ p+q\ne0,
\end{array}\right.
\end{equation}
to an arbitrary rank $r$ instanton sheaf $E$ of charge $k$, 
the vanishing \eqref{inst vanish} yields that $E$ is the 
cohomology of a monad of the form
\begin{equation}\label{general monad}
0 \to V_k\otimes\op3(-1) \to V_{r+2k}\otimes\op3 \to 
V'_k\otimes\op3(1) \to 0. 
\end{equation}
Note that, conversely, the cohomology of a monad as above is an 
instanton sheaf as defined in Definition \ref{Def 
instanton}, see \cite[Thm. 3]{Jardim2006}.

The cokernel $N$ of any monomorphism of sheaves 
$\mathcal{O}_{\PP}(-1) \to \Omega^1_{\PP}(1)$  is called a 
\emph{null correlation sheaf}:
\begin{equation}\label{nul corr}
0\to\mathcal{O}_{\p3}(-1)\xrightarrow{s}\Omega_{\p3}^1(1)\to N
\to0.
\end{equation}
Such sheaves are precisely the rank 2 instanton sheaves of 
charge $1$, and are parametrized by the projective space 
$\mathbb{P}H^0(\Omega^1_{\p3}(2))\simeq \p5$. If $N$ is 
locally free, we say that $N$ is a \emph{null correlation 
bundle}. The set of non locally free null correlation 
sheaves are parametrized by the Grassmanian of lines in 
$\p3$: given a line $l\subset\p3$ the corresponding null 
correlation sheaf $N_l$ is defined up to an 
isomorphism by the exact sequence 
\begin{equation}\label{triple for N}
0 \to N_l\to V_{2}\otimes\op3\xrightarrow{\varepsilon}
\mathcal{O}_l(1)\to 0.
\end{equation}

For the purposes of this paper, it is important to study 
rank 4 instanton bundles of charge 1. Some of the following 
facts might be well known, but for lack of a reference we 
include proofs here.

\begin{Lema}\label{directsum}
Every rank 4 instanton bundle $E$ of charge 1 over $\PP$ 
fits into an exact sequence:
\begin{equation}\label{sum}
0\to V_2\otimes\op3\to E\to N\to0,
\end{equation}
where $N$ is a null correlation sheaf. If $N$ is a null 
correlation bundle, then sequence \eqref{sum} splits. In 
addition,
\begin{equation}\label{h0E=2}
h^0(E)=2,\ \ \ \ \ h^i(E)=0,\ \ \ i\ge1.
\end{equation}
\end{Lema}

\begin{proof}
As observed in the paragraph right below Definition \ref{Def 
instanton}, $E$ can be obtained as cohomology of a monad 
\eqref{general monad} for $r=4$ and $k=1$:
\begin{equation}\label{ME}
M_E:\ \ \ \xymatrix{
0 \ar[r] & \mathcal{O}_{\mathbb{P}^3}(-1) \ar[r]^{\alpha} & 
V_6 \otimes \mathcal{O}_{\mathbb{P}^3} \ar[r]^{\beta} & 
\mathcal{O}_{\mathbb{P}^3}(1)  \ar[r]& 0.}
\end{equation}
\noindent Without loss of generality, we can choose 
homogeneous coordinates $[x:y:z:w]$ in $\p3$ and a basis in 
$V_6$, such that the map $\beta$ can be written as 
\begin{equation}\label{beta}
\beta := \left( 
\begin{array}{cccccc} x ~ & ~y~ & ~z~ & ~w~ & ~0~ & ~0 
\end{array} \right).
\end{equation}
Hence using the display of the above 
monad, we have that $E$ fits into the following short exact 
sequence
\begin{equation}\label{quotE}
0 \to  \mathcal{O}_{\mathbb{P}^3}(-1) \to  V_2 \otimes 
\mathcal{O}_{\mathbb{P}^3} \oplus \Omega(1)\to  E  \to  0.
\end{equation}
From the above short exact sequence we can build up the 
following commutative diagramm
$$ 
\xymatrix{
&V_2\otimes\op3\ar@{>->}[d]\ar@{=}[r]&V_2\otimes\op3\ar@{>>}[d]\\
\op3(-1)\ar@{>->}[r] \ar@{=}[d]& V_2 \otimes 
\op3\oplus\Omega^1(1)\ar@{>>}[r]\ar@{>>}[d] & E\ar@{>>}[d] \\
\op3(-1) \ar@{>->}[r] & \Omega^1(1)\ar@{>>}[r] & \ N.} 
$$
The rightmost column is the desired sequence.

If $N$ is locally free, then $ \mathrm{Ext}^1(N,\op3) \simeq 
H^1(N)=0$, so the sequence in display \eqref{sum} splits.
The equality \eqref{h0E=2} follows from \eqref{sum}.
\end{proof}

\textbf{Remark.} Assume that a bundle $E$ is the cohomology 
bundle of the monad \eqref{ME}. Then an easy cohomological 
computation shows that $E$ is a rank 4 instanton bundle of 
charge 1.

\vspace{5mm}

Note that, substituting $N$ instead of $E$ into the Beilinson
spectral sequence \eqref{Beilinson} yields the monad for $N$:
\begin{equation}\label{MN}
M_N:\ \ \ \ 0\to \op3(-1)\xrightarrow{\overline{\alpha}}
V_4\otimes\op3\xrightarrow{\overline{\beta}}\op3(1)\to0,\ \ 
\ \ \ \ N=\ker\overline{\beta}/\mathrm{im}~\overline{\alpha},
\end{equation}
fitting together with the monad \eqref{ME} in the 
commutative diagram 
\begin{equation}\label{two monads}
\xymatrix{
 & V_2 \otimes \op3 \ar@{>->}[d] & \\
\op3(-1)\ar@{>->}[r]^-{\alpha}\ar@{=}[d]& V_6\otimes 
\op3\ar@{>>}[r]^-{\beta} \ar@{>>}[d] & \op3(1)\ar@{=}[d]\\
\op3(-1)\ar@{>->}[r]^-{\overline{\alpha}} & 
V_4\otimes\op3\ar@{>>}[r]^-{\overline{\beta}} & \op3(1).} 
\end{equation} 
In this diagram the exact middle column is obtained from the exact 
triple $0\to V_2\to V_6\to V_4\to0$ arising as the cohomology 
sequence of the exact triple $0\to V_2\otimes\Omega_{\p3}\to
E\otimes\Omega_{\p3}\to N\otimes\Omega_{\p3}\to0$ induced by
the triple \eqref{sum}. In addition, from \eqref{two monads}
and \eqref{beta} we obtain
\begin{equation}\label{bar beta}
\overline{\beta}= \left( 
\begin{array}{cccc} x ~ & ~y~ & ~z~ & ~w
\end{array} \right).
\end{equation}

\begin{Prop}\label{hi S2E}
Let $E$ be a rank 4 instanton bundle $E$ of charge 1 over 
$\PP$, then $h^0(S^2E)=3,\ h^1(S^2E)=5,\ h^2(S^2E)=0.$ 
\end{Prop}

\begin{proof}
Taking the symmetric power of the sequence in display 
\eqref{quotE}, we obtain that $S^2 E$ fits into the 
following short exact sequence:
\begin{equation*}
    \xymatrix{
0 \ar[r] & V_2 \otimes \mathcal{O}_{\mathbb{P}^3}(-1) \oplus 
\Omega \ar[r] & (S^2 V_2 \otimes \mathcal{O}_{\mathbb{P}^3}) 
\oplus (V_2 \otimes \Omega(1)) \oplus S^2 \Omega (2)  \ar[r] 
& S^2 E  \ar[r]& 0.}
\end{equation*}

\noindent From the long exact sequence of cohomology we have

$$0 \to S^2 V_2 \to H^0(S^2 E) \to \mathbf{k} \to \Lambda^2 
W^{\vee} \to H^1(S^2 E) \to 0,$$
\noindent where $W$ is the 4-dimensional $\mathbf{k}-$vector 
space such that $\mathbb{P}^3 = \mathbb{P}(W)$, and 
$$0 \to H^2(S^2 E) \to 0.$$
\noindent  From which we conclude that $H^2(S^2 E) = 0$. The 
map $\mathbf{k} \to \Lambda^2 W^{\vee}$ is given by the 
skew-form corresponding to the morphism 
$\mathcal{O}_{\PP}(-1) \to \Omega(1)$, in the definition of 
$E$, and in particular is non-zero, which implies that  
$\mathbf{k} \to \Lambda^2 W^{\vee}$ is injective, and 
therefore 
$$H^0(S^2 E) \simeq S^2 V_2 ~{\textrm{and}~ H^1(S^2 E) 
\simeq \Lambda^2 W^{\vee}/\mathbf{k} }$$
\noindent from which our result follows. 
\end{proof}

In the remaining part of this section we will discuss the 
existence of a symplectic structure on an arbitrary rank 4 
instanton bundle of charge 1. Recall that a locally free 
sheaf $E$ is said to be \emph{symplectic} if it admits a 
symplectic structure, that is, there exists an isomorphism 
$\varphi:E\rightarrow E^{\vee}$, such that $\varphi^{\vee} = 
-\varphi$. A \emph{symplectic instanton bundle} is a pair 
$(E,\varphi)$ consisting of an instanton bundle $E$ together 
with a symplectic structure $\varphi$ on it; two symplectic 
instanton bundles $(E,\varphi)$ and $(E',\varphi')$ are 
isomorphic if there exists a bundle isomorphism $g:E\simto 
E'$ such that $\varphi=g^\vee\circ\varphi'\circ g$.

\begin{Prop}\label{sympl str charge 1}
Any rank 4 instanton bundle $E$ of charge 1 admits a 
symplectic structure. In particular, if $E$ splits as
$E=V_2\otimes\op3\oplus N$ where $N$ is a null 
correlation bundle, then any symplectic structure $\varphi$
on $E$ splits as $\varphi=\varphi_1\oplus\varphi_2$ where
$\varphi_1$ and $\varphi_2$ are symplectic structures on
$V_2\otimes\op3$ and $N$, respectively.
\end{Prop}
\begin{proof}
Let $E$ be an instanton rank 4 bundle. If $E$ splits as
$E=V_2\otimes\op3\oplus N$, where $N$ is a null 
correlation bundle, then $\det(V_2\otimes\op3)=\det N=\op3$, 
hence both rank 2 bundles $V_2\otimes\op3$ and $N$ admit 
symplectic structures, say, 
\begin{equation}\label{two sympl str}
\varphi_1:V_2\otimes\op3\xrightarrow{\simeq}V_2^{\vee}\otimes
\op3,\ \ \ \ \ \ \varphi_2:N\xrightarrow{\simeq}N^{\vee}. 
\end{equation}
Then
\begin{equation}\label{dir sum}
\varphi=\varphi_1\oplus\varphi_2:\ 
E\xrightarrow{\simeq}E^{\vee}
\end{equation} 
is a symplectic structure on $E$.
Since
\begin{equation}\label{Homs=0}
\Hom(V_2\otimes\op3,N)=\Hom(N,V_2\otimes\op3)=0,
\end{equation} 
it follows immediately that any symplectic stucture on $E$
splits as in \eqref{dir sum}. 

Note also that, in view of \eqref{nul corr} 
\begin{equation}\label{Exts=0}
\Ext^i(V_2\otimes\op3,N)=\Ext^i(N,V_2\otimes\op3)=0,\ \ \ i\ge1.
\end{equation}

Now let $E$ be a non-splitting instanton, i. e. 
$E/V_2\otimes\op3$ is a null correlation sheaf $N_l$ which 
is not locally free at the points of the line $l$ given by 
the equations, say, $\{x=y=0\}$. This means that the 
morphism $\overline{\alpha}$ in the monad \eqref{MN} for 
$N=N_l$ is vanishes at $l$, so that
\begin{equation}\label{bar alpha}
\overline{\alpha}=A\binom{x}{y},\ \ \ \ \ A=(\alpha_{ij}),
\ \ \ 1\le i\le4,\ 1\le j\le2,
\end{equation}
where $A$ is a $(4\times2)$-matrix of rank 2. The condition
that $\overline{\beta}\circ\overline{\alpha}$ in \eqref{MN} 
is the zero morphism together with \eqref{bar alpha} and
\eqref{bar beta} implies that all the coefficients 
$\alpha_{ij}$ of the matrix $A$, except $\alpha_{12}$ and 
$\alpha_{21}$, vanish and $\alpha_{12}+\alpha_{21}=0$. Thus, 
taking without loss of generality $\alpha_{12}=1$, we obtain
\begin{equation}\label{bar alpha2}
\overline{\alpha}=\left( 
\begin{array}{c} y \\-x\\ 0\\0
\end{array} \right).
\end{equation}
Since the cohomology sheaf of the middle monad in 
\eqref{two monads} is locally free, the morphism $\alpha$ in 
that diagram is a subbundle morphism. This together with
\eqref{bar alpha2} implies, again without loss of generality,
that there exists a $(2\times2)$-matrix $C=(c_{ij})$ such 
that  
\begin{equation}\label{alpha}
\alpha=\left( 
\begin{array}{c} y \\-x\\ 0\\ 0\\ c_{11}x+c_{12}y+z\\
c_{21}x+c_{22}y+w
\end{array} \right)_.
\end{equation}
It now follows from \eqref{alpha} and \eqref{beta} that the 
skew-symmetric $(6\times6)$-matrix $J$ of the following 
$(2\times2)$-block form 
$$
J=\left( 
\begin{array}{ccc} Q & \mathbb{O} & -C^t \\
\mathbb{O} & \mathbb{O} & -\mathbbm{1}\\ C & \mathbbm{1} & 
\mathbb{O} \end{array} \right), \ \ \ \ \ \ \ \textrm{where} 
\ \ \ \ Q=\left(\begin{array}{cc} 0 & -1 \\
1 & 0 \end{array} \right)
$$
satisfies the condition $\alpha=J\beta^t$. This means that,
taking $-J$ for the matrix of the symplectic form $q:V_6\to
V_6^{\vee}$ with respect to the above choice of the basis in
$V_6$, we obtain that $\alpha$ and $\beta$ as morphisms 
satisfy the condition $\beta=\alpha^{\vee}\circ q$.
In other words, the monad \eqref{ME} is symplectic. Then by
Corollary \ref{Okoneksimpletico} its cohomology bundle $E$
also admits a symplectic structure.        
\end{proof}

\section{Modified instanton monads}\label{modif}

We will now study monads of the form \eqref{Monad Gak}, with $a\ge2$ and $k\ge1$:
\begin{equation}\label{Monad2}
0\to\op3(-a)\oplus V_{k}\otimes\op3(-1)\xrightarrow{\alpha}V_{2k+4}
\otimes\op3\xrightarrow{\beta}\op3(a)\oplus V'_k\otimes\op3(1)\to0,
\end{equation}
which we call \emph{modified instanton monads}. The set of 
isomorphism classes of bundles arising as cohomology of such monads
will be denoted by $\mathcal{G}(a,k)$. Note that, so far, $\mathcal{G}(a,k)$ could possibly be empty.
\begin{Prop}\label{Prop 7}
For each $a\ge2$ and $k\ge1$, the family $\mathcal{G}(a,k)$ 
is non-empty and contains stable bundles, while every 
$[\mathcal{E}]\in \mathcal{G}(a,k)$ is $\mu$-semistable. In 
addition, every $[\mathcal{E}]\in\mathcal{G}(a,1)$ is stable.
\end{Prop}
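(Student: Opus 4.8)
The plan is to handle $\mu$-semistability directly from the monad (\ref{Monad2}), and to obtain non-emptiness, the existence of stable members, and stability for $k=1$ by realising the bundles of $\calg(a,k)$ as cohomologies of \emph{sandwich monads} $0\to\op3(-a)\xrightarrow{\sigma}\tilde E\xrightarrow{\tau}\op3(a)\to0$ with $\tilde E$ a rank $4$ instanton bundle of charge $k$. Write (\ref{Monad2}) as $\mathcal{A}\xrightarrow{\alpha}\mathcal{B}\xrightarrow{\beta}\mathcal{C}$, with $\mathcal{A}=\op3(-a)\oplus k\op3(-1)$, $\mathcal{B}=(4+2k)\op3$, $\mathcal{C}=\op3(a)\oplus k\op3(1)=\mathcal{A}^{\vee}$, and set $K=\ker\beta$, so $0\to\mathcal{A}\to K\to E\to0$ and $0\to K\to\mathcal{B}\to\mathcal{C}\to0$. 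A Chern class computation gives $\rk E=2$, $c_1(E)=0$, $c_2(E)=a^2+k$, so I may use the standard criterion that a rank $2$ bundle on $\PP$ with trivial determinant is $\mu$-semistable iff $h^0(E(-1))=0$ and $\mu$-stable iff $h^0(E)=0$. Twisting the two sequences above by $\op3(-1)$: since every summand of $\mathcal{A}(-1)$ and $\mathcal{B}(-1)$ has a negative twist, $H^1(\mathcal{A}(-1))=0$ gives $H^0(E(-1))\simeq H^0(K(-1))$ and $H^0(\mathcal{B}(-1))=0$ gives $H^0(K(-1))=0$. Hence $h^0(E(-1))=0$ and every $E\in\calg(a,k)$ is $\mu$-semistable.

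For non-emptiness and a stable member, pick a rank $4$ instanton bundle $\tilde E$ of charge $k$: for $k\ge2$ take $\tilde E=N\oplus F$ with $N$ a null correlation bundle and $F$ a stable rank $2$ instanton bundle of charge $k-1$, and for $k=1$ take $\tilde E=N\oplus2\op3$. Let $\sigma$ be a general section of $\tilde E(a)$; since $\tilde E(a)$ is globally generated for $a\ge2$ and has rank $4>\dim\PP$, a general section is nowhere vanishing, so $\sigma\colon\op3(-a)\hookrightarrow\tilde E$ is a subbundle inclusion. If $\varphi\colon\tilde E\xrightarrow{\sim}\tilde E^{\vee}$ is a symplectic structure (rank $4$ instantons are symplectic) and $\tau:=\sigma^{\vee}\circ\varphi$, then $\tau$ is surjective (dual of a subbundle inclusion) and the composite $\op3(-a)\xrightarrow{\sigma}\tilde E\xrightarrow{\varphi}\tilde E^{\vee}\xrightarrow{\sigma^{\vee}}\op3(a)$ represents the alternating form $\varphi$ restricted to the rank $1$ bundle $\op3(-a)$, hence is zero; so $0\to\op3(-a)\xrightarrow{\sigma}\tilde E\xrightarrow{\tau}\op3(a)\to0$ is a monad, whose cohomology $E$ is a rank $2$ bundle with the Chern classes above. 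Inserting the Horrocks monad $0\to k\op3(-1)\to(4+2k)\op3\to k\op3(1)\to0$ of $\tilde E$ into this sandwich presents $E$ as the cohomology of a monad (\ref{Monad2}) --- the bijection announced in the Introduction --- so $E\in\calg(a,k)$ and $\calg(a,k)\ne\emptyset$. Finally $H^0(E)\simeq\ker(\tau_{*}\colon H^0(\tilde E)\to H^0(\op3(a)))$ since $H^0(\op3(-a))=0$; for $k\ge2$, $H^0(\tilde E)=H^0(N)\oplus H^0(F)=0$, so $H^0(E)=0$ and $E$ is stable.

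It remains to prove that every $E\in\calg(a,1)$ is stable. By the bijection, $E$ is the cohomology of a sandwich monad with $\tilde E$ a rank $4$ instanton bundle of charge $1$, and $E$ being locally free forces $\sigma$ to be a subbundle inclusion. By Lemma \ref{directsum}(i) there is $0\to2\op3\xrightarrow{\mu}\tilde E\xrightarrow{\nu}N\to0$ with $N$ a null correlation sheaf and $h^0(N)=0$, so $H^0(\mu)\colon H^0(2\op3)\xrightarrow{\sim}H^0(\tilde E)$ and $H^0(E)\simeq\ker\tau_{*}$ as before. First, $\nu\sigma\ne0$: otherwise $\sigma$ factors through $\mu$, making $\op3(-a)$ a subbundle of $2\op3$, impossible for $a\ge1$ (the quotient would be a line bundle $\op3(a)$, splitting $2\op3$). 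Using (\ref{split h0}) write $\sigma\leftrightarrow(\sigma_N,\sigma_1,\sigma_2)$ with $\sigma_N=\nu\sigma\ne0$ and $\sigma_1,\sigma_2\in H^0(\op3(a))$; the monad (\ref{Monad2}) is self-dual, the hypotheses of Lemma \ref{okonek} hold for it (all relevant twists being negative), and Corollary \ref{Okoneksimpletico} then gives, up to a nonzero scalar, $\tau=\sigma^{\vee}\circ\varphi$ with $\varphi$ the symplectic form on $\tilde E$; from this one checks that $\tau_{*}$ is injective exactly when $\sigma_1,\sigma_2$ are linearly independent. Were they dependent, a base change in $2\op3$ would allow $\sigma_2=0$, so the vanishing locus of $\sigma$ would be contained in $Z(\sigma_N)\cap Z(\sigma_1)$; but $Z(\sigma_N)$, the zero locus of a nonzero section of $N(a)$, is non-empty of dimension $\ge1$ (non-empty because $c_2(N)=1$ forbids $N(a)$ from being a sum of line bundles, so $N(a)$ has no nowhere-vanishing section), while $Z(\sigma_1)$ is a hypersurface, and the projective dimension theorem forces their intersection to be non-empty --- contradicting that $\sigma$ is nowhere vanishing. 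Hence $\sigma_1,\sigma_2$ are independent, $\tau_{*}$ is injective, $h^0(E)=0$, and $E$ is stable.

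The main obstacle is this last step, and within it the \emph{non-split} case: when $\tilde E$ is the unique locally free extension of a non-locally-free null correlation sheaf $N_l$ by $2\op3$ (i.e.\ $l\in G\subset\p5\simeq\cali(4,1)$), $N=N_l$ is singular along $l$, the splitting (\ref{split h0}) is no longer canonical, and one must control $Z(\sigma)$ near $l$ more carefully --- e.g.\ using the explicit monad of $\tilde E$ from Lemma \ref{unique lf} --- or obtain the statement by a semicontinuity argument specialising these bundles to the ones built from split $\tilde E$. A second, more routine point to make precise is the bijection between monads (\ref{Monad2}) and sandwich monads: that the maps $\sigma$ and $\tau$ induced by $\alpha$ and $\beta$ are, respectively, a subbundle inclusion and a surjection whenever $E$ is locally free; here is where Lemma \ref{okonek} and Corollary \ref{Okoneksimpletico} are used.
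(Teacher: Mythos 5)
Your semistability argument is correct and is exactly the paper's (the paper dismisses the computation $h^0(E(-1))=0$ as ``straightforward''; you spell it out). The other two assertions each contain a genuine gap.

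For non-emptiness and the existence of stable members you produce a nowhere-vanishing section of $\tilde E(a)$, $\tilde E=N\oplus F$, by asserting that $\tilde E(a)$ is globally generated for $a\ge2$. That is unjustified and false in general: for $F$ a stable rank 2 instanton of charge $k-1$ one has $\chi(F(a))=2\binom{a+3}{3}-(k-1)(a+2)$, which is negative for $a=2$ and $k$ large, so $F(a)$ need not be globally generated and may have no nonzero sections at all. In the latter case every section of $\tilde E(a)$ lies in $N(a)$ and vanishes along a curve of degree $a^2+1$, so your construction produces no subbundle inclusion $\op3(-a)\hookrightarrow\tilde E$ for that $\tilde E$. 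The step can be repaired (take $F$ a 't~Hooft bundle, choose $\sigma_F\in H^0(F(a))$ with purely one-dimensional zero locus $X$, and use global generation of $N(a)$ restricted to $X$ to choose $\sigma_N$ with $Z(\sigma_N)\cap X=\emptyset$), but this care is needed; the paper sidesteps it entirely by invoking Hartshorne--Rao's Lemma~4.8 applied to a section of $F(2a)$ for a rank 2 't~Hooft instanton $F$ of charge $k$ and a disjoint complete intersection $Y$ of two degree-$a$ surfaces, which also makes stability of the resulting $E$ immediate ($X\cup Y$ lies on no degree-$a$ surface).

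The more serious gap is the one you flag yourself: your proof that \emph{every} $E\in\calg(a,1)$ is stable only treats the case $\tilde E\simeq N\oplus2\cdot\op3$ with $N$ a null correlation \emph{bundle}. When $\tilde E$ is the non-split extension of a singular $N_l$ (Lemma \ref{unique lf}), the map $2\cdot\op3\to\tilde E$ drops rank along $l$, the symplectic form is not block-diagonal with respect to that subsheaf, and the identification of $Z(\sigma)$ with $Z(\sigma_N)\cap Z(\sigma_1)\cap Z(\sigma_2)$ fails along $l$; moreover the semicontinuity fallback you propose cannot close this, since $h^0(E)=0$ is an open condition and does not pass to special members of a family. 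The paper's argument is entirely different and covers all cases in a few lines: if $h^0(E)\ne0$ then $h^0(K)\ne0$ for $K=\ker\beta$, so $K$ contains a trivial subbundle and $K':=K/\op3$ sits in $0\to K'\to5\cdot\op3\to\op3(1)\oplus\op3(a)\to0$; by Bohnhorst--Spindler such a kernel is $\mu$-stable of slope $-(a+1)/3$, while $\alpha$ induces a nonzero, hence injective, map $\op3(-1)\to K'$, forcing $-1<-(a+1)/3$, i.e.\ $a<2$, a contradiction. You should either adopt that argument or complete the non-split case explicitly using the monad and the symplectic matrix $J$ given in Section \ref{SIB}.
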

\begin{proof}
Let $F$ be an rank $2$ instanton bundle of charge $k$. Let 
$a \geq 2$ and take $\sigma \in \h^0(F(2a))$ such that its 
zero locus $X:=(\sigma)_0$ is a curve; such 
$\sigma$ always exists if $F$ is a 't Hooft instanton 
bundle, for instance. Let $Y$ be a complete intersection 
curve given by the intersection of two surfaces of degree 
$a$ such that $X \cap Y = \emptyset$. According to 
\cite[Lemma 4.8]{hartshorne1991}, there exists a bundle $E$ 
and a section $\tau \in \h^0(E(a))$ such that $(\tau)_{0} = 
Y \cup X$ which is given as cohomology of a monad of the 
form (\ref{Monad2}). In addition, since $F$ is stable, $X$ 
is not contained in any surface of degree $a$, hence neither 
is $Y \cup X$, and $\mathcal{E}$ is also stable.

It is straightforward to check that every $[\mathcal{E}]\in 
\mathcal{G}(a,k)$ satisfies $h^0(\mathcal{E}(-1))=0$, thus 
$\mathcal{E}$ is $\mu$-semistable.

Now fix $k=1$, and assume that there is $[\mathcal{E}] \in 
\mathcal{G}(a,1)$ satisfying $h^0(\mathcal{E})\ne0$. Setting 
$K:=\ker\beta$, it follows that $h^0(K)\ne0$, hence the 
quotient $K':=K/\op3$ fits into the following exact sequence
$$ 
0 \to K' \to V_{5}\otimes\op3 \xrightarrow{\beta'} \op3(1)
\oplus\op3(a) \to 0. 
$$
By \cite[Thm. 2.7]{bohnhorst1992stability} $K'$ is 
$\mu$-stable. However, the monomorphism $\alpha:\op3(-a)
\oplus\op3(-1)\to K$ induces a monomorphism $\op3(-1)\to 
K'$; by the $\mu$-stability of $K'$, we should have
$$ 
-1 < \mu(K') = -\frac{a+1}{3} ~~ \Longrightarrow ~~ a<2, 
$$
providing the desired contradiction.
\end{proof}

\noindent
\textbf{Remark.} Note that the 
space $X$ of monads \eqref{Monad2} is a locally closed subscheme of 
the affine space $A=\Hom(\op3(-a)\oplus V_{k}\otimes\op3(-1),V_{2k+4}
\otimes\op3)\times\Hom(V_{2k+4}\otimes\op3,\op3(a)\oplus V'_k\otimes
\op3(1))$ defined as $X=\{(\alpha,\beta)\in A\ |\ \alpha\ \text{is a 
subbundle morphism},\ \beta\ \text{is an epimorphism and}\ 
\beta\circ\alpha=0\}$, and there is the 
universal cohomology bundle $\boldsymbol{\cale}$ on $\mathbf{X}$. 
In case $k=1$, it follows from Proposition \ref{Prop 7} that
$\mathcal{G}(a,1)$ is the image of $X$ under the modular morphism 
$\Phi_X:\ X\to\calb(a^2+1),\ x\mapsto[\mathbf{E}|_{\p3\times
\{x\}}]$. Thus, $\mathcal{G}(a,1)$ is a constructible set, i. e., a 
disjoint union of locally closed subsets of $\calb(a^2+1)$. 

\vspace{2mm}
Next, we provide a cohomological characterization for 
modified instanton bundles.

\begin{Prop} A vector bundle $\mathcal{E}$ on $\p3$ is the 
cohomology of a monad of the form (\ref{Monad2}) if and only 
if $\h^1_{*}(\mathcal{E})$ has one generator in degree $-a$ 
and $k$ generators in degree $-1$, and its Chern classes are 
$c_1(\mathcal{E}) = 0$, and $c_2(\mathcal{E}) = a^2 + k$.
\end{Prop}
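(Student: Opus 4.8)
The plan is to prove the two implications separately, in both cases combining Horrocks' description of a locally free sheaf as the cohomology of a monad of line bundles with the Chern class bookkeeping of such a monad.

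\textbf{The "only if" direction.} Suppose $E$ is the cohomology of a monad of the form (\ref{Monad2}), with terms $A=\op3(-a)\oplus k\cdot\op3(-1)$, $B=(4+2k)\cdot\op3$ and $C=\op3(a)\oplus k\cdot\op3(1)$. First I would read off the Chern classes from $c(E)=c(B)c(A)^{-1}c(C)^{-1}$: since $C\simeq A^\vee$ one has $c(A)c(C)=(1-a^2h^2)(1-h^2)^k$ for $h$ the hyperplane class and $c(B)=1$, so $c(E)=1+(a^2+k)h^2+\cdots$, giving $c_1(E)=0$ and $c_2(E)=a^2+k$. Next I would identify $\h^1_*(E)$ as a graded module over $S=\mathbb{K}[x,y,z,w]$. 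Putting $K=\ker\beta$ and using $H^1_*(\op3(d))=H^2_*(\op3(d))=0$ on $\p3$, the two short exact sequences $0\to A\to K\to E\to0$ and $0\to K\to B\to C\to0$ coming from the monad give graded isomorphisms $\h^1_*(E)\simeq\h^1_*(K)\simeq\coker\bigl(H^0_*(B)\xrightarrow{H^0_*(\beta)}H^0_*(C)\bigr)$. Now $H^0_*(C)$ is a free graded $S$-module with one generator in degree $-a$ and $k$ generators in degree $-1$, and since every entry of $\beta$ is a form of positive degree ($a$, resp. $1$), the image of $H^0_*(\beta)$ lies in $\mathfrak{m}\cdot H^0_*(C)$; by graded Nakayama the minimal generators of $\h^1_*(E)$ are therefore exactly the images of those of $H^0_*(C)$, i.e. one in degree $-a$ and $k$ in degree $-1$. (Equivalently, one observes that (\ref{Monad2}) is already a minimal Horrocks monad, because all twists in $A$ are negative and all twists in $C$ positive, and then quotes \cite[Theorem 2.3]{JardimSteiner}.)

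\textbf{The "if" direction.} Assume $E$ is locally free with $c_1(E)=0$, $c_2(E)=a^2+k$, and $\h^1_*(E)$ minimally generated by one element in degree $-a$ and $k$ elements in degree $-1$. Since $c_1(E)=0$ forces $E\simeq E^\vee$, Horrocks' theorem together with the reduction of (\ref{monad-horrocks}) recalled above presents $E$ as the cohomology of a minimal self-dual monad
$$ 0 \to \bigoplus_{i=1}^{r}\op3(a_i) \to \bigoplus_{j=1}^{r+1}\bigl(\op3(b_j)\oplus\op3(-b_j)\bigr) \to \bigoplus_{i=1}^{r}\op3(-a_i) \to 0, $$
where $r$ is the number of minimal generators of $\h^1_*(E)$ and the $a_i$ are their degrees, cf. \cite[Theorem 2.3]{JardimSteiner}. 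By hypothesis $r=k+1$ and $\{a_i\}=\{-a,-1,\dots,-1\}$, so the outer terms are precisely $\op3(-a)\oplus k\cdot\op3(-1)$ and its dual, while the middle term has rank $2k+4$. It remains to force all $b_j$ to vanish: computing $c_2$ from this monad exactly as above gives $c_2(E)=a^2+k-\sum_j b_j^2$, so $c_2(E)=a^2+k$ yields $\sum_j b_j^2=0$, hence $b_j=0$ for every $j$ and the middle term is $(4+2k)\cdot\op3$. This is exactly a monad of the form (\ref{Monad2}).

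\textbf{Main obstacle.} The only genuinely delicate point is the module-theoretic step in the "only if" direction: one must make sure that the identification $\h^1_*(E)\simeq\coker H^0_*(\beta)$ together with the Nakayama argument (or, dually, the minimality of the Horrocks monad and \cite[Theorem 2.3]{JardimSteiner}) really pins down the \emph{minimal} generators of $\h^1_*(E)$ and their degrees, and not merely the graded dimensions $h^1(E(p))$; everything else is routine Chern class arithmetic and diagram chasing with line bundle cohomology on $\p3$.
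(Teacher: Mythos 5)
Your proof is correct and follows essentially the same route as the paper's: the "if" direction invokes \cite[Theorem 2.3]{JardimSteiner} together with self-duality to pin down the outer terms and then kills the twists in the middle term by the Chern class computation $c_2(E)=a^2+k-\sum_j b_j^2$, exactly as in the paper. The only difference is that you spell out the "only if" direction (via $\h^1_*(E)\simeq\coker H^0_*(\beta)$ and graded Nakayama), which the paper dismisses as straightforward; your argument there is sound.
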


\begin{proof}
The ``only if'' part is straightforward. If $\mathcal{E}$ is 
a self dual vector bundle on $\PP$ with one generator in 
degree $-a$ and $k$ generators in degree $-1$, then by 
\cite[Thm. 2.3]{JardimSteiner}, $\mathcal{E}$ is 
cohomology of a monad of the type: 
$$
0 \to \mathcal{O}_{\mathbb{P}^3}(-a) \oplus V_{k}\otimes 
\mathcal{O}_{\mathbb{P}^3}(-1) \xrightarrow{\alpha} 
\oplus_{i=1}^{2k+4} \mathcal{O}_{\mathbb{P}^3}(k_i) 
\xrightarrow{\beta} \mathcal{O}_{\mathbb{P}^3}(a)\oplus 
V_{k}\otimes \mathcal{O}_{\mathbb{P}^3}(1) \to 0.
$$

Computing the Chern class give us $c_2(\mathcal{E}) = a^2 + 
k -\sum_{i=1}^{6}k_{i}^{2}$, since $c_2(\mathcal{E}) = a^2 + 
k$, we have $k_i = 0 $ for all $i$.
\end{proof}

The modified instanton bundles are also related to usual 
instanton bundles of higher rank in a very important way. 
The precise relationship is outlined in the next couple of 
lemmas, and then summarized in Proposition \ref{bijection} 
below.

\begin{Lema}\label{equivalence1}
(i) Given a vector bundle 
$[\mathcal{E}]\in\mathcal{G}(a,k)$, there exists a rank 4 
instanton bundle $E$ of charge $k$, and 
sections $\sigma\in H^0(E(a)),\ \tau\in 
H^0(E^{\vee}(a))$ such that the complex:
\begin{equation}\label{Monad3}
0 \to \mathcal{O}_{\mathbb{P}^3}(-a) \xrightarrow{\sigma} 
E \xrightarrow{\tau} \mathcal{O}_{\mathbb{P}^3}(a) \to0
\end{equation}
is a monad whose cohomology coincides with $\mathcal{E}$.\\
(ii) The construction of the monad \eqref{Monad3} is 
functorial in the sense that, if $\cale\xrightarrow{\sim}\cale'
$, then the induced isomorphism $E\xrightarrow{\sim}E'$ extends 
to an isomorphism of monads
\begin{equation}\label{fgh}
\xymatrix{
\op3(-a) \ar@{>->}[r]^-{\sigma} \ar[d]^{f}_{\simeq} & E 
\ar@{>>}[r]^-{\tau} \ar[d]^{g}_{\simeq} & \op3(a) 
\ar[d]^{h}_{\simeq}\\
\op3(-a) \ar@{>->}[r]^-{\sigma'} & E'\ar@{>>}[r]^-{\tau'} & \op3(a).}
\end{equation}
\end{Lema}
\begin{proof}
(i) Since $a\ge2$, there is the canonical subbundle morphism 
$i:V_{k}\otimes \mathcal{O}_{\p3}(-1)\to\mathcal{O}_{\p3}(-a) 
\oplus V_{k}\otimes \mathcal{O}_{\p3}(-1)$ which, together with 
the morphisms $\alpha$ and $\beta$ from the monad 
\eqref{Monad2}, yields a subbundle morphism $\alpha_1:=\alpha
\circ i:V_{k}\otimes \mathcal{O}_{\p3}(-1)\to V_{2k+4}\otimes 
\mathcal{O}_{\p3}$ and an epimorphism $\beta_1:=i^{\vee}\circ
\beta:V_{2k+4}\otimes\mathcal{O}_{\p3}\to V'_{k}\otimes\mathcal{
O}_{\p3}(1)$. We thus obtain a new monad of type 
\eqref{general monad} with $r=4$:
\begin{equation}\label{Monad 2A}
0 \to V_{k}\otimes\op3(-1) \xrightarrow{\alpha_1}V_{2k+4}\otimes
\op3\xrightarrow{\beta_1}V'_k\otimes\op3(1)\to 0
\end{equation}
the cohomology bundle 
\begin{equation}\label{E is 4-instanton}
E=\frac{\ker(\beta_1)}{\im(\alpha_1)}
\end{equation}
of which is a rank-4 instanton, according to a remark after
\eqref{general monad}. The monads \eqref{Monad2} and 
\eqref{Monad 2A} fit in a commutative diagram with exact columns
\begin{equation}\label{i}
\xymatrix{
V_k\otimes\op3(-1)\ar@{>->}[d]^-i\ar@{>->}[dr]^-{\alpha_1} & & \op3(a)
\ar@{>->}[d] \\ 
V_k\otimes\op3(-1)\oplus\op3(-a)\ar@{>>}[d]\ar@{>->}[r]^-{\alpha} & 
V_{2k+4}\otimes\op3\ar@{>>}[r]^-{\beta}\ar@{>>}[dr]^-{\beta_1} & 
V'_k\otimes\op3(1)\oplus\op3(a)\ar@{>>}[d]^-{i^\vee} \\
\op3(-a) & & V'_k\otimes\op3(1).} 
\end{equation}
Now a standard diagram chasing with diagram \eqref{i} using 
\eqref{E is 4-instanton} and the relation $\cale=\frac{\ker(\beta
)}{\im(\alpha)}$ yields a subbundle morphism $\op3(-a)
\xrightarrow{\sigma} E$ and an epimorphism $E\xrightarrow{\tau} 
\op3(a)$ fitting in the monad \eqref{Monad3} with the cohomology 
bundle $\cale$.

(ii) Again, since $a\ge2$, it follows immediately from 
\eqref{Monad 2A} and \eqref{E is 4-instanton} that $\Hom(\op3(a),E')=
\Hom(E,\op3(-a))=\Ext^1(\op3(a),\op3(-a))=\Ext^{1}(E,\op3(-a))=\Ext^{1}
(\op3(a),E')=\Ext^{2}(\op3(a),\op3(-a))=0$ for the rank-4 instanton 
bundles $E$ and $E'$ of charge $k$. The statement (ii) now follows 
from \cite[Lemma 4.1.3]{okonek}.
\end{proof}

\begin{Lema}\label{equivalence2}
Given a monad \eqref{Monad3} with $E$ being a rank 4 instanton 
bundle of charge $k$, there is a monad of the form 
\eqref{Monad2} whose cohomology coincides with the cohomology 
of the above monad.
\end{Lema}
\begin{proof}
This is a diagram chasing. Namely, by \eqref{general monad}, $E$ 
is the cohomology of a monad of the form
\begin{equation} \label{kinstanton}
0\to V_k\otimes\op3(-1)\xrightarrow{\alpha_1}V_{2k+4}\otimes\op3
\xrightarrow{\beta_1}V'_k\otimes\op3(1)\to 0.
\end{equation} 
This monad can be splitted to the exact triples of bundles
\begin{equation}\label{tri1}
0\to E\to\mathrm{coker}(\alpha_1)\to V'_k\otimes\op3(1)\to 0,
\end{equation}
\begin{equation}\label{tri2}
0\to V_k\otimes\op3(-1)\xrightarrow{\alpha_1}V_{2k+4}\otimes 
\op3\xrightarrow{\varepsilon}\mathrm{coker}(\alpha_1)\to0.
\end{equation}
Respectively, the monad \eqref{Monad3} splits into the exact
triples
\begin{equation}\label{split1}
0\to\ker(\tau)\to E\xrightarrow{\tau}\op3(a)\to0,\ \ \ \ \ \ 
0\to\op3(-a)\to\ker(\tau)\xrightarrow{\delta}\cale\to0,
\end{equation}
where $\cale$ is the cohomology bundle of the monad 
\eqref{Monad3}. The triple \eqref{tri1} and the first triple 
\eqref{split1}, together with the vanishing of 
$\Ext^1(V'_k\otimes\op3(1),\op3(a))$, yields by push-out the 
exact triple $0\to\ker(\tau)\to\mathrm{coker}(\alpha_1)
\xrightarrow{\gamma}V'_k\otimes\op3(1)\oplus\op3(a)\to0$ 
which, together with \eqref{tri2}, yields a commutative 
diagram in which we set $K:=\ker(\gamma\circ\varepsilon)$: 
$$ 
\xymatrix{
V_k\otimes\op3(-1)\ar@{>->}[r]\ar@{=}[d]& K\ar@{>>}[r]\ar@{>->}[d] & \ker(\tau)\ar@{>->}[d] \\
V_k\otimes\op3(-1) \ar@{>->}[r] & V_{2k+4}\otimes\op3\ar@{>>}[r]
^-{\varepsilon}\ar@{>>}[d]^-{\gamma\circ\varepsilon} & \mathrm{coker}(\alpha_1)\ar@{>>}[d]^-{\gamma}\\
& V'_k\otimes\op3(1)\oplus\op3(a)\ar@{=}[r] & 
V'_k\otimes\op3(1)\oplus\op3(a).} 
$$
Similarly, the upper horizontal triple of this diagram, together
with the second triple \eqref{split1}, yield the exact triple
$0\to V_k\otimes\op3(-1)\oplus\op3(-a)\to K\to\cale\to0$ which,
being combined with the middle vertical triple in this diagram, 
yields the monad \eqref{Monad2} with the cohomology bundle $\cale$.
\end{proof}

Next, we argue that the instanton bundle $E$ obtained 
in Lemma \ref{equivalence1} comes with a natural symplectic 
structure.

\begin{Lema}\label{sympl}
If $E$ is a rank 4 instanton bundle of charge $k$ that 
fits in a monad of the form (\ref{Monad3}), such that its 
cohomology sheaf $\cale$ is a vector bundle, then $E$ admits 
a symplectic structure, and $\tau$ is determined by $\sigma$.
\end{Lema}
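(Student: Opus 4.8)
The plan is to obtain the symplectic structure by applying Corollary \ref{Okoneksimpletico} to the monad (\ref{Monad3}) and its dual. Note first that the cohomology $E$ of (\ref{Monad3}) is a rank $2$ bundle with $c_1(E)=c_1(\tilde E)=0$, so $\wedge^2E\simeq\op3$ and the pairing $v\mapsto(w\mapsto v\wedge w)$ defines a canonical isomorphism $f\colon E\simto E^\vee$ with $f^\vee=-f$. Dualizing (\ref{Monad3}) produces the monad $0\to\op3(-a)\xrightarrow{\tau^\vee}\tilde E^\vee\xrightarrow{\sigma^\vee}\op3(a)\to 0$, whose cohomology is $E^\vee$. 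Granting that (\ref{Monad3}) together with its dual satisfies the hypotheses of Lemma \ref{okonek}, Corollary \ref{Okoneksimpletico} then supplies isomorphisms $h\colon\op3(a)\simto\op3(a)$ and $\varphi\colon\tilde E\simto\tilde E^\vee$ with $\varphi^\vee=-\varphi$ and $h\circ\tau=\sigma^\vee\circ\varphi$. The first relation exhibits $\varphi$ as a symplectic structure on $\tilde E$; the second, after absorbing the invertible scalar $h\in\Hom(\op3(a),\op3(a))\simeq\mathbb{K}$ into $\tau$ (which merely replaces (\ref{Monad3}) by an isomorphic monad), reads $\tau=\sigma^\vee\circ\varphi$, so $\tau$ is determined by $\sigma$ and the symplectic form.

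It therefore remains to verify the six vanishings of Lemma \ref{okonek} for (\ref{Monad3}) and its dual, that is, with $A=\op3(-a)$, $B=\tilde E$, $C=\op3(a)$ and $A'=\op3(-a)$, $B'=\tilde E^\vee$, $C'=\op3(a)$. Two of them, $\h^1(\op3(-2a))=\h^2(\op3(-2a))=0$, hold since a line bundle on $\PP$ has cohomology only in degrees $0$ and $3$. The remaining four reduce to $\h^0(\tilde E^\vee(-a))=0$ (coming from $\Hom(B,A')$ and $\Hom(C,B')$) and $\h^1(\tilde E^\vee(-a))=0$ (coming from $\h^1(B^\vee\otimes A')$ and $\h^1(C^\vee\otimes B')$). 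For these I use that $\tilde E^\vee$ is again a rank $4$ instanton bundle of charge $k$: Serre duality on $\PP$, with $\omega_{\PP}=\op3(-4)$, turns the four defining vanishings of Definition \ref{Def instanton} for $\tilde E$ into those for $\tilde E^\vee$. In particular $\h^0(\tilde E^\vee(-1))=0$, hence $\h^0(\tilde E^\vee(-a))=0$ for $a\ge2$ via the inclusion $\tilde E^\vee(-a)\hookrightarrow\tilde E^\vee(-1)$. For $\h^1(\tilde E^\vee(-a))$ I twist the Beilinson monad $0\to k\cdot\op3(-1)\to(4+2k)\cdot\op3\to k\cdot\op3(1)\to 0$ of $\tilde E^\vee$ by $\op3(-a)$, break it into the two short exact sequences $0\to K\to(4+2k)\cdot\op3(-a)\to k\cdot\op3(1-a)\to 0$ and $0\to k\cdot\op3(-1-a)\to K\to\tilde E^\vee(-a)\to 0$, and chase cohomology using $\h^0(\op3(1-a))=\h^1(\op3(-a))=\h^2(\op3(-1-a))=0$ for $a\ge2$; this gives $\h^1(K)=0$ and then $\h^1(\tilde E^\vee(-a))=0$.

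With these vanishings in hand, Corollary \ref{Okoneksimpletico} applies verbatim and proves both claims simultaneously; in particular it is part of that corollary's statement that the resulting $\varphi$ is an isomorphism, not merely a nonzero skew form, so no extra argument is needed there. I expect the only real work to be the cohomological bookkeeping of the second step, namely keeping track of the twists so that all of $\h^0(\tilde E^\vee(-a))$ and $\h^1(\tilde E^\vee(-a))$ genuinely vanish; this is exactly where the hypotheses that $\tilde E$ is an instanton bundle and that $a\ge2$ are used, everything else being formal.
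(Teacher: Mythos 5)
Your proposal is correct and follows essentially the same route as the paper: both obtain the symplectic form $\varphi$ on $\tilde E$ by feeding the canonical skew isomorphism $E\simeq E^\vee$ of the rank~2 cohomology into Corollary~\ref{Okoneksimpletico} applied to the monad (\ref{Monad3}) and its dual, yielding $\varphi^\vee=-\varphi$ and $\tau=\sigma^\vee\circ\varphi$ up to the scalar $h$. The only difference is that you explicitly verify the six vanishing hypotheses of Lemma~\ref{okonek} (correctly, via $h^0(\tilde E^\vee(-a))=h^1(\tilde E^\vee(-a))=0$ for $a\ge2$), a check the paper leaves implicit.
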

\begin{proof}
Since $\cale$ is a rank $2$ vector bundle with $c_1(\cale)=0
$, there is a (unique up to scaling) symplectic isomorphism 
$\varphi:\cale\xrightarrow{\simeq}\cale^{\vee}$. Now, 
repeating the proof of Lemma \ref{equivalence1}(ii) for 
$\cale'=\cale^{\vee}$, we obtain an isomorphism of monads:
$$
\xymatrix{
\op3(-a) \ar@{>->}[r]^-{\sigma}\ar[d]^{g}_-{\simeq}&  E \ar@{>>}[r]
^-{\tau}\ar[d]^{\varphi}_-{\simeq}& \mathcal{O}_{\mathbb{P}^3}(a) 
\ar[d]^-{h}_-{\simeq} \\
\op3(-a) \ar@{>->}[r]^-{\tau^{\vee}}&  E^{\vee} 
\ar@{>>}[r]^-{\sigma^{\vee}} & \mathcal{O}_{\mathbb{P}^3}(a)}
$$
such that $\varphi^{\vee} = - \varphi$, so 
$(E,\varphi)$ is a symplectic instanton bundle, and 
$\tau=\sigma^\vee\circ\varphi$. 
\end{proof}

Putting Lemmas \ref{equivalence1}, \ref{equivalence2} and 
\ref{sympl} together, we obtain the following statement.

\begin{Prop}\label{bijection} 
A rank 2 bundle $\mathcal{E}$ belongs to $\mathcal{G}(a,k)$, 
i. e., $\mathcal{E}$ is the cohomology of a monad of the form
\eqref{Monad2} if and only if it is also the cohomology 
$\mathcal{E}=\mathcal{H}^0(A_{E,\varphi,\sigma})$ of a monad 
of the form:
\begin{equation}\label{mon M}
A_{E,\varphi,\sigma}:\ \ \ \ \ \ 
0 \to \mathcal{O}_{\mathbb{P}^3}(-a) \xrightarrow{\sigma} 
E \xrightarrow{\sigma^\vee\circ\varphi} 
\mathcal{O}_{\mathbb{P}^3}(a) \to 0,
\end{equation}
where $(E,\varphi)$ is a rank 4 symplectic instanton bundle 
of charge $k$.
\end{Prop}

\section{Set $\mathcal{G}(a,1)$ and related families of 
sheaves}\label{G(a,1)}

We introduce a piece of notation which we will use below.
Denote by $\cali(k)$ the set of isomorphism classes of 
symplectic rank 4 instanton bundles with $c_2=k$. As before, 
let $V_k$ and $V_{2k+4}$ be the fixed vector spaces of 
dimensions $k$ and $2k+4$, respectively, and let $(\wedge^2V
_{2k+4}^{\vee})^0$ be an open subset of the vector space 
$\wedge^2V_{2k+4}^{\vee}$ consisting of nondegenerate 
symplectic forms on $V_{2k+4}$. Next, for a given morphism
$\tilde{\alpha}: V_k\otimes\op3(-1)\to V_{2k+4}\otimes \op3$
we denote by $a$ the homomorphism $V_k\otimes U_4\to 
V_{2k+4}$ corresponding to the morphism $\tilde{\alpha}$ 
under the isomorphism $\Hom(V_k\otimes\op3(-1),V_{2k+4}
\otimes\op3)\cong W:=\Hom(V_k\otimes U_4,V_{2k+4})$, where 
$U_4:=H^0(\op3(1))^{\vee}$. We will call $\tilde{\alpha}$ 
the \textit{morphism associated to} $a\in W$.

Recall the description of symplectic rank 4 instantons 
$(E, \varphi)$ in terms of symplectic monads 
\eqref{sympl monad} below. Namely, for a given point
$$
m=(a,q)\in W\times(\wedge^2V_{2k+4}^{\vee})^0
$$
consider the monad \eqref{kinstanton} in which $\tilde{\alpha
}$ the morphism associated to the homomorphism $a$, and the
morphism $\tilde{\beta}$ is such that $\tilde{\beta}=\tilde{
\alpha}^t(q)$, where $\tilde{\alpha}^t(q)$ is the 
composition 
$V_{2k+4}\otimes \op3\xrightarrow{q\otimes\mathrm{id}_{\op3}}
V_{2k+4}^{\vee}\otimes\op3\xrightarrow{\tilde{\alpha}^{\vee}}
V_k^{\vee}\otimes\op3(1)$:
\begin{equation}\label{sympl monad}
A_{m}:\ \ \ 0\to V_k\otimes \op3(-1)
\xrightarrow{\tilde{\alpha}} V_{2k+4}\otimes\op3\xrightarrow{
\tilde{\alpha}^t(q)} V_k^{\vee}\otimes\op3(1)\to0.
\end{equation} 
We call $A_m$ a \textit{symplectic monad}. We also will 
denote by $\mathcal{H}^0(A_m)$ the cohomology bundle of the 
monad $A_m$.

Consider the set 
$\mathcal{M}(k)$ of symplectic monads 
\eqref{sympl monad}:
\begin{equation}\label{space M(k)}
\mathcal{M}(k)=\{(a,q)\in W\times(\wedge^2V_{2k+4}^{\vee})^0
\ |\ (a,q)\ \textrm{satisfies the conditions (i)-(ii)} \}
\end{equation}
where:\\
(i) the morphism $\tilde{\alpha}$ associated to $a$ 
is a subbundle morphism,\\
(ii) the composition $\tilde{\alpha}^t(q)\circ\tilde{\alpha}
$ is the zero morphism.\\
Since $W$ is a vector space, and the condition (i), resp., 
(ii) is an open, resp., closed condition on the point $a
\in W$, it follows that $\mathcal{M}(k)$ has a natural 
structure of a locally closed subscheme of the affine space 
$W\times\wedge^2V_{2k+4}^{\vee}$. 

\vspace{5mm}
From now on we will restrict to the case $k=1$. Set
$\widetilde{M}:=\mathcal{M}(1)$. Note that the 
condition (i) of the definition of $\mathcal{M}(k)$ is empty 
in the case $k=1$, since in this case the the vanishig of 
$\wedge^2(V_1^{\vee}\otimes\op3(1))$ clearly implies 
$\alpha^t(q)\circ\alpha=0$. Hence, $\widetilde{M}$ 
is a nonempty open (hence dense) subset of the affine space 
$W\times\wedge^2V_6^{\vee}$, where $W=\Hom(V_1\otimes 
U_4,V_6)\simeq\mathbf{k}^{24}$. In particular, 
$\widetilde{M}$ is irreducible and
\begin{equation}\label{dim M}
\dim\widetilde{M}=\dim W+\dim\wedge^2V_6^{\vee}=45.
\end{equation}

\begin{Prop}\label{boundedness}
Any rank 4 instanton of charge 1 appears as a cohomology 
bundle of a symplectic monad 
\begin{equation}\label{sympl monad k=1}
A_{m}:\ \ \ 0\to \op3(-1)\xrightarrow{\tilde{\alpha}} V_6
\otimes\op3\xrightarrow{\tilde{\alpha}^t(q)}\op3(1)\to0.
\end{equation} 
for some $m\in\widetilde{M}$.
\end{Prop}
\begin{proof}
Let $E$ be a rank 4 instanton of charge 1. According to 
Proposition \eqref{sympl str charge 1}, $E$ admits a 
symplectic structure $\varphi:E\xrightarrow{\sim}E^{\vee}$.
It then known from \cite[Section 3]{Bruzzo2012} that, under 
the condition $h^0(E)=h^1(-2)=0$ on a symplectic bundle $E$, 
this bundle is a cohomology of a symplectic monad from 
$\widetilde{M}$. However, the proof given therein, 
works without changes under the slightly weaker conditions 
\eqref{inst vanish} used in the Definition \ref{Def 
instanton}.
\end{proof}

On $\widetilde{\mathbf{M}}=\p3\times\widetilde{M}$ there is 
the universal symplectic monad
$\boldsymbol{\mathcal{A}}_{\widetilde{\mathbf{M}}}:\ \ 
0\to\calo_{\widetilde{\mathbf{M}}}(-1,0)
\xrightarrow{\boldsymbol{\alpha}}V_6\otimes
\calo_{\widetilde{\mathbf{M}}}\xrightarrow{\boldsymbol{\alpha
}^t}\calo_{\widetilde{\mathbf{M}}}(1,0)\to0
$
with the cohomology sheaf 
$\widetilde{\mathbf{E}}=\ker\boldsymbol{\alpha}^t/
\mathrm{im}\boldsymbol{\alpha}$.
Here $\boldsymbol{\alpha}^t=\boldsymbol{\alpha}^{\vee}\circ
\mathbf{q}_{\widetilde{\mathbf{M}}}$ and
$\mathbf{q}_{\widetilde{\mathbf{M}}}:\ V_6\otimes\calo_{
\widetilde{\mathbf{M}}}\xrightarrow{\sim}V_6^{\vee}\otimes
\calo_{\widetilde{\mathbf{M}}}$
is the tautological symplectic structure on
$V_6\otimes\calo_{\widetilde{\mathbf{M}}}$.
From now on we fix an isomorphism of the monad
$\boldsymbol{\mathcal{A}}_{\widetilde{\mathbf{M}}}$ with its 
dual monad $\boldsymbol{\mathcal{A}}_{\widetilde{
\mathbf{M}}}^{\vee}$ by the following diagram:
$$
\xymatrix{
\boldsymbol{\mathcal{A}}_{\widetilde{\mathbf{M}}}:\ \ 
\calo_{\widetilde{\mathbf{M}}}(-1,0)\ar@{>->}[r]^-{\boldsymbol{
\alpha}}\ar[d]_{\mathrm{-id}}^-{\simeq} &  
V_6\otimes\calo_{\widetilde{\mathbf{M}}}
\ar@{>>}[r]^-{\boldsymbol{\alpha}^t} 
\ar[d]_{\mathbf{q}_{\widetilde{\mathbf{M}}}}^-{\simeq}& 
\calo_{\widetilde{\mathbf{M}}}(1,0)
\ar[d]_-{\mathrm{id}}^-{\simeq} \\
\boldsymbol{\mathcal{A}}_{\widetilde{\mathbf{M}}}^{\vee}:\ \  
\calo_{\widetilde{\mathbf{M}}}(-1,0)
\ar@{>->}[r]^-{(\boldsymbol{\alpha}^t)^{\vee}} & V_6^{\vee}\otimes
\calo_{\widetilde{\mathbf{M}}}\ar@{>>}[r]^-{\boldsymbol{\alpha}
^{\vee}} & \calo_{\widetilde{\mathbf{M}}}(1,0).}
$$
This isomorphism induces the symplectic structure 
\begin{equation}\label{bf E}
\boldsymbol{\varphi}_{\widetilde{\mathbf{M}}}:
\widetilde{\mathbf{E}}\xrightarrow{\simeq}
\widetilde{\mathbf{E}}^{\vee},\ \ \ \ \text{and}\ \ \ \ 
E_{m}=\widetilde{\mathbf{E}}|_{\p3\times\{m\}},\ \ \ \ 
\varphi_{m}=\boldsymbol{\varphi}_{\widetilde{\mathbf{M}}}
|_{\p3\times\{m\}}:E_{m}\xrightarrow{\sim}E_{m}^{\vee},\ \ \ \ 
m\in\widetilde{M},
\end{equation}
i. e. $(E_m,\varphi_m)$ is a symplectic rank 4 instanton of charge 1. 
Note that, by the universality of the space $\widetilde{M}$, for any 
symplectic rank 4 instanton $(E,\varphi)$, there exists a unique 
point $m\in\widetilde{M}$ such that $(E,\varphi)=(E_m,\varphi_m)
$, where $E_m$ and $\varphi_m$ are given by \eqref{bf E}.
It follows from \eqref{h0E=2} and the Base Change that the 
$\calo_{\widetilde{M}}$-sheaf $\widetilde{\mathbf{U}}:=p_{
\widetilde{M}*}\widetilde{\mathbf{E}}$ is a rank 2 locally free 
sheaf and there is an exact triple on $\widetilde{\mathbf{M}}$, 
where $\mathbf{ev}$ is the canonical morphism:
\begin{equation}\label{global triple}
0\to \widetilde{\mathbf{U}}_{\widetilde{\mathbf{M}}}
\xrightarrow{\mathbf{ev}}\widetilde{\mathbf{E}}\to
\widetilde{\mathbf{N}}\to0,\ \ \ \ \ \ 
\widetilde{\mathbf{N}}:=\mathrm{coker}(\mathbf{ev}),
\end{equation} 
and, for any $m\in\widetilde{M}$, the restriction of this triple 
onto $\p3\times\{m\}$ coincides with the triple \eqref{sum} for 
$E=E_{m}$. We thus have a map $\Psi:\widetilde{M}\to\mathbb{P}^5
=P(\wedge^2V_4^{\vee})$,\ $m\mapsto[\widetilde{\mathbf{N
}}|_{\p3\times\{m\}}]$. The map $\Psi$ has the following explicit 
description. Given a point $m=(a,q)\in\widetilde{M}$, consider a
homomorphism $f(a,q):V_4\xrightarrow{a}V_6\xrightarrow{q}V_6
^{\vee}\xrightarrow{a^{\vee}}V_4^{\vee}$. It is clearly skew-
symmetric: $f(a,q)\in\wedge^2V_4^{\vee}$. An easy diagram
chasing with the display of the monad $\boldsymbol{\mathcal{
A}}_{\widetilde{\mathbf{M}}}|_{\p3\times\{m\}}$ (i. e., 
equivalently, of the monad \eqref{sympl monad k=1}) using 
\eqref{global triple} shows that
\begin{equation}\label{Psi=kf}
\Psi(m)=\langle f(a,q)\rangle\in P(\wedge^2V_4
^{\vee}),
\end{equation}
so that $\Psi$ is a well-defined morphism. By the 
universality of the monad $\boldsymbol{\mathcal{A}}
_{\widetilde{\mathbf{M}}}$, $\Psi$ is surjective.

We next consider the set
\begin{equation*}\label{cal M}
M:=\{m\in\widetilde{M}\ |\ \widetilde{\mathbf{N}}|
_{\p3\times\{m\}}\ \textrm{is locally 
free}\}.
\end{equation*}\label{def cal M}
From the definition of $M$ it follows that it is a nonempty open 
subset of $\widetilde{M}$, hence it is irreducible since 
$\widetilde{M}$ is irreducible. Denote
\begin{equation}\label{bf cal E}
\mathbf{E}:=\widetilde{\mathbf{E}}
_{\mathbf{M}},\ \ \ \ \ \ 
\boldsymbol{\varphi}_{\mathbf{M}}:=(\boldsymbol{\varphi}
_{\widetilde{\mathbf{M}}})_{\mathbf{M}}:
\mathbf{E}\xrightarrow{\simeq}\mathbf{E}^{\vee},\ \ \ \ \ 
\mathbf{U}:=\widetilde{\mathbf{U}}_{M},\ \ \ \ \ \ 
\mathbf{N}:=\widetilde{\mathbf{N}}_{\mathbf{M}},
\end{equation}
where $\boldsymbol{\varphi}_{\widetilde{\mathbf{M}}}$ is the
symplectic structure \eqref{bf E}.
Note that, by Lemma \ref{directsum}, for any $m\in M$ the 
triple \eqref{global triple} restricted onto $\p3\times\{m\}$ 
splits:
\begin{equation}\label{loc split}
E_{m}\simeq\op3^{\oplus2}\oplus N_{m},\ \ \ \ \ \  m\in M,
\end{equation} 
where $N_{m}$ is a null correlation bundle. We now show that 
these splittings globalize to the splitting of the triple 
$0\to\mathbf{U}\to\mathbf{E}\to\mathbf{N}\to0$ obtained from  
\eqref{global triple} by restriction onto $\mathbf{M}$:
\begin{equation}\label{glob split}
\mathbf{E}=\mathbf{U}\oplus\mathbf{N}.
\end{equation} 
Indeed, the last triple considered as an extension is given by
the element in $\Ext^1(\mathbf{N},\mathbf{U})$. By
\eqref{Homs=0}, \eqref{Exts=0} and the Base Change \cite[Thm. 
1.4]{L}, the sheaves $\lext^i_{p_M}(\mathbf{N},\mathbf{U}),\ 
i=0,1$, vanish, and the exact sequence relating global and 
relative Ext \cite[(1)]{L} yields $\Ext^1(\mathbf{N},\mathbf{U}
)=0$.

Now, for $a\ge2$ and any $m\in M$, the triple 
\eqref{sum} twisted by $\op3(a)$, in which we set $E=
E_{m}$, yields: 
\begin{equation}\label{h0(E(a))}
h^0(E_{m}(a))=4\binom{a+3}{3}-a-2,\ \ \ \ \ 
\ h^i(E_{m}(a))=0,\ \ i>0.
\end{equation}
Formulas \eqref{bf E}, \eqref{h0(E(a))} and the 
Base Change show  that the sheaf
\begin{equation}\label{F}
F=p_{M*}(\mathbf{E}(a,0))
\end{equation}
is a locally free $\calo_{M}$-sheaf of rank $r=h^0(E_{m}(a))$. 
Consider the scheme $T=\mathbf{P}(F^{\vee})$. By the above, $T$ 
is set-theoretically described as
\begin{equation}\label{M(a,k)}
T=\{(m,\langle\sigma\rangle)\ |\ 
m\in M,\ 0\ne\sigma\in H^0(E_{m}(a))\},
\end{equation}
and the natural projection $\rho:T\to M,\ (m,\langle\sigma
\rangle)\mapsto m$ is a locally trivial $\mathbb{P}^{r-1}
$-bundle. Note that, since $M$ is an open subset of the 
affine space $W$, it follows that $T$ is a variety, and 
from \eqref{dim M} and \eqref{h0(E(a))} we have
\begin{equation}\label{dims}
\dim T=h^0(E_{m}(a))-1+\dim
M=4\binom{a+3}{3}-a+42. 
\end{equation}
On $T$ and $\mathbf{M}$ we have canonical morphisms $F_T^{\vee}
\stackrel{\mathrm{ev}}{\twoheadrightarrow}L$ and $F_{\mathbf{M}}
\xrightarrow{\mathrm{can}}\mathbf{E}(a,0)$, respectively, where 
$L=\calo_{\mathbf{P}(F^{\vee})}(1)$ is the Grothendieck sheaf. 
Consider the composition of morphisms
\begin{equation}\label{bold sigma}
\boldsymbol{\sigma}:\op3\boxtimes 
L^{\vee}\xrightarrow{\mathrm{ev}_{\mathbf{T}}^{\vee}}
F_{\mathbf{T}}\xrightarrow{\mathrm{can}_{\mathbf{T}}}
\mathbf{E}_{\mathbf{T}}(a,0).
\end{equation}
By definition, for any point $(m,\mathbf{k}\sigma)\in T$ the 
restriction $\boldsymbol{\sigma}|_{\p3\times\{(m,\mathbf{k}\sigma)
\}}$ coincides, up to a twist by $\op3(-a)$, with the morphism 
$\sigma:\op3(-a)\to E_{m}$. In view of \eqref{loc split} we may 
represent $\sigma$ as $\sigma=(\sigma_1,\sigma_2),\ \  
\sigma_1\in H^0(\op3^{\oplus2}(a)),\ \ \sigma_2\in H^0(N_{m}(a))$.
For the pair $\sigma=(\sigma_1,\sigma_2)\ne(0,0)$ we will adopt 
in the sequel, together with the notation $\langle\sigma\rangle$, 
the following equivalent notation:
\begin{equation}\label{new notation}
[\sigma_1:\sigma_2]:=\langle\sigma\rangle=\{(\lambda\sigma_1,
\lambda\sigma_2)|\lambda\in\mathbf{k}^{\times}\},
\end{equation}
and also understand $[\sigma_1:\sigma_2]$ as a point of the
projective space $P(H^0(\op3^{\oplus2}(a))\oplus H^0(N_m(a)))$. 
Under this notation, define an open subset $S$ of $T$ as
\begin{equation}\label{def M(a)}
\begin{split}
& S:=\{(m,[\sigma_1:\sigma_2])\in T\ |\ (i)\ 
\sigma=(\sigma_1,\sigma_2):\op3(-a)
\to E_{m}\simeq\op3^{\oplus2}\oplus N_{m}\ \\
& \textrm{\ \ \ \ \ \ \ \ \ \ is a subbundle morphism and}\ 
(ii)\ \sigma_1,\sigma_2\ne0\}.
\end{split}
\end{equation}
The subset $S$ is clearly open in $T$. Moreover, it is 
nonempty. Indeed, for any point $m\in M$, $E_m$ decomposes as 
in \eqref{loc split}. Take any $a\ge2$. Since the direct 
summand $N_{m}$ is a null correlation bundle, it follows 
quickly from the triple \eqref{nul corr} for $N=N_{m}$, 
twisted by $\op3(a)$, that $N_{m}(a)$ is generated by global 
sections. From this it follows easily (cf. 
\cite[Proof of Prop. 1.4] 
{Hart1})  that a general section $\sigma_2\in H^0(N_{m}(a))
$ has 1-dimensional zero-locus $(\sigma_2)_0$. Next, since a 
general section $\sigma_1\in H^0(\op3^{\oplus2}(a))$ has for 
its zero locus  a complete intersection curve 
$(\sigma_1)_0=D_1
\cap D_2$ for two surfaces $D_1$, $D_2$ of degree $a$, it 
follows that for general $D_1$ and $D_2$ we have $(\sigma_1)_0
\cap(\sigma_2)_0=\emptyset$. Hence, the section $\sigma=(
\sigma_1,\sigma_2)\in H^0(E_m(a))$ has no zeroes and 
therefore defines a subbundle morphism $\sigma:\op3(-a)\to
E_m$.

It follows that $S$ is irreducible and dense in $T$ since 
$T$ is irreducible. 
The morphism $\boldsymbol{\sigma}_{\mathbf{S}}$ is included 
in the monad $\boldsymbol{\mathcal{A}}:=(\boldsymbol{
\mathcal{A}}_{\widetilde{\mathbf{M}}})_{\mathbf{S}}$ on 
$\mathbf{S}$:
\begin{equation}\label{bf A}
\boldsymbol{\mathcal{A}}:\ 0\to\op3(-a)\boxtimes 
L^{\vee}\xrightarrow{\boldsymbol{\sigma}_{\mathbf{S}}}
\mathbf{E}_{\mathbf{S}}\xrightarrow{\boldsymbol{
\sigma}_{\mathbf{S}}^t}\op3(a)\boxtimes L\to0
\end{equation}
where $\boldsymbol{\sigma}_{\mathbf{S}}^t$ is the composition
$\mathbf{E}_{\mathbf{S}}\xrightarrow{
\boldsymbol{\varphi}_{\mathbf{S}}}\mathbf{E}
_{\mathbf{S}}^{\vee}\xrightarrow{\boldsymbol{\sigma}_{
\mathbf{S}}^{\vee}}\op3(a)\boxtimes L$.
By construction, for any point $(m,\langle\sigma\rangle)\in
S$, the restriction of the monad $\boldsymbol{\mathcal{A}}$ 
onto $\p3\times\{(m,\langle\sigma\rangle)\}$ is 
isomorphic to the monad $A_{E_m,\varphi_m,\sigma}$ in 
\eqref{mon M}. Hence,
\begin{equation}\label{H0=H0}
\mathcal{H}^0(\boldsymbol{\mathcal{A}})|_{\p3\times\{(m,\langle
\sigma\rangle)\}}=\mathcal{H}^0(A_{E_{m},\varphi_{m},\sigma}),\ \ 
\ \ \ \ \ (m,\langle\sigma\rangle)\in S.
\end{equation}
In \eqref{tilde F,T}-\eqref{H0=H0 new} below we will extend 
the constructions \eqref{F}-\eqref{M(a,k)}, 
\eqref{def M(a)}-\eqref{H0=H0} of the data $F$, $T$, $S$, 
$\boldsymbol{\mathcal{A}}$ and $\mathcal{H}^0
(\boldsymbol{\mathcal{A}})$ over $M$ to the
constructions of the corresponding data $\widetilde{F}$, 
$\widetilde{T}$, $\widetilde{S}$, 
$\widetilde{\boldsymbol{\mathcal{A}}}$,
$\mathcal{H}^0(\widetilde{\boldsymbol{\mathcal{A}}})$ 
over $\widetilde{M}$. As a consequence, it will follow:
\begin{equation}\label{F,bold Ma}
F=\widetilde{F}_{\mathbf{M}},\ \ \ \ \ \ 
T=M\times_{\widetilde{M}}\widetilde{T},\ \ \ \ \ \ 
\xymatrix{S\ \ar@{^{(}->}[rr]^{\textrm{open\ dense}} & &\ 
\widetilde{S},}\ \ \ \ \ \ 
\boldsymbol{\mathcal{A}}=
(\widetilde{\boldsymbol{\mathcal{A}}})_{\mathbf{S}},
\ \ \ \ \ \ \mathcal{H}^0(\boldsymbol{\mathcal{A}})=
(\mathcal{H}^0(\widetilde{\boldsymbol{\mathcal{A}}}))
_{\mathbf{S}}.
\end{equation}

For this, we first set
\begin{equation}\label{tilde F,T}
\widetilde{F}:=p_{\widetilde{M}*}(\widetilde{\mathbf{E}}(a,0)),\ 
\ \ \ \ \ \widetilde{T}:=\mathbf{P}(\widetilde{F}^{\vee}),
\end{equation}
and remark that formulas \eqref{h0(E(a))} are still true for any 
$m\in\widetilde{M}$, so that the sheaf $\widetilde{F}$
is a locally free $\calo_{\widetilde{M}}$-sheaf of rank $r=h^0(
E_m(a))$ given by \eqref{h0(E(a))}, and the scheme $\widetilde{T}:
=\mathbf{P}(\widetilde{F}^{\vee})$ is set-theoretically described 
as $\widetilde{T}=\{(m,\langle\sigma\rangle)\ |\ m\in\widetilde{
\mathcal{M}},\ 0\ne\sigma\in H^0(E_{m}(a))\}$. The natural 
projection $\widetilde{\rho}:\widetilde{T}\to\widetilde{M},\ 
(m,\langle\sigma\rangle)\mapsto m$ is a locally trivial 
$\mathbb{P}^{r-1}$-bundle, so that, since $\widetilde{M}$ is an 
open subset of the affine space $W$, it follows that 
$\widetilde{T}$ is an irreducible variety of dimension
\begin{equation}
\dim\widetilde{T}=h^0(E_{m}(a))-1+\dim\widetilde{M}=4\binom{a
+3}{3}-a+42. 
\end{equation}
Here, in accordance with \eqref{dims}, $\widetilde{T}$ and 
$T$ have the same dimension. Next, we have an open subset 
$\widetilde{S}$ of $\widetilde{T}$ defined as $\widetilde{S}:=
\{(m,\langle\sigma\rangle)\in\widetilde{T}\ |\ \sigma:\op3(-a)\to
E_{m}\ \textrm{is a subbundle morphism.}\}$ Since the condition 
(ii) in \eqref{def M(a)} is open, comparing the definition of 
$\widetilde{S}$ with \eqref{def M(a)} we obtain that $S$ is an 
open subset of $T\cap \widetilde{S}$, where the intersection is 
taken in $\widetilde{T}$. Since $S$ is nonempty and $\widetilde{T}
$ is irreducible, the inclusion $\xymatrix{S\ \ar@{^{(}->}[rr]^{
\textrm{open\ dense}} & &\ \widetilde{S}}$ in \eqref{F,bold Ma}
follows  and, moreover, $\widetilde{\rho}_S:S\to M$ coincides 
with the projection $\rho$.

Next, we have the extension of the universal monad 
\eqref{bf A} from $\mathbf{S}$ to $\widetilde{\mathbf{S}}$:
$\widetilde{\boldsymbol{\mathcal{A}}}:\ 0\to\op3(-a)\boxtimes 
L^{\vee}\xrightarrow{\boldsymbol{\sigma}}\widetilde{\mathbf{E}}
_{\widetilde{\mathbf{S}}}\xrightarrow{\boldsymbol{\sigma}^t}\op3
(a)\boxtimes L\to0$, satisfying the relation similar to \eqref{H0=H0}:\ \ 
\begin{equation}\label{H0=H0 new}
\mathcal{H}^0(\widetilde{\boldsymbol{\mathcal{A}}})
|_{\p3\times\{(m,\langle\sigma\rangle)\}}=\mathcal{H}^0(A_{E_{m},
\varphi_{m},\sigma}),\ \ \ \ \ \ 
(m,\langle\sigma\rangle)\in\widetilde{S}.
\end{equation}
%
%
Whence, the relations \eqref{F,bold Ma} follow from 
\eqref{bf cal E}, \eqref{tilde F,T} and the Base Change.  

Consider the modular morphisms
\begin{equation}\label{Phi S,Phi tilde S}
\Phi_{S}:S\to\mathcal{B}(a^2+1),\ \ \ 
\ \Phi_{\widetilde{S}}:\widetilde{S}
\to\mathcal{B}(a^2+1),  
\end{equation}
defined by the families of sheaves $\mathcal{H}^0(\boldsymbol{
\mathcal{A}})$ and $\mathcal{H}^0(\widetilde{\boldsymbol{
\mathcal{A}}})$, respectively. The relations \eqref{F,bold Ma}, 
\eqref{H0=H0 new}, and Proposition \ref{bijection} together with 
the irreducibility of $\widetilde{S}$ yield
\begin{Prop}\label{descrn of G(a,1)}
(i) For $a\ge2$, the set $\mathcal{G}(a,1)$ of isomorphism 
classes of cohomology sheaves of monads \eqref{Monad2} for 
$k=1$ is the image of the modular morphism 
$$
\Phi_{\widetilde{S}}:\ \widetilde{S}
\to\mathcal{B}(a^2+1),\ \ \ \ \ \ (m,\langle\sigma\rangle)
\mapsto[\mathcal{H}^0(\widetilde{\boldsymbol{\mathcal{A}}})
|_{\p3\times\{(m,\langle\sigma\rangle)\}}],
$$
defined by the family $\mathcal{H}^0(\widetilde{\boldsymbol{
\mathcal{A}}})$ of sheaves over $\widetilde{S}$. Its closure 
$\overline{\mathcal{G}(a,1)}$ in $\mathcal{B}(a^2+1)$ is an 
irreducible scheme. \\
(ii) The set $\mathcal{G}(a,1)_0:=\Phi_{S}(S)$ is dense in 
$\overline{\mathcal{G}(a,1)}$.
\end{Prop}

In the remaining part of this section we will construct a new
family of monads $\mathbf{A}_{\mathbf{Y}}$ on $\p3$,  with 
base $Y$ and cohomology sheaves belonging to 
$\mathcal{G}(a,1)$, for which the related modular morphism
\begin{equation*}
\Phi_Y:\ Y\to\mathcal{B}(a^2+1),\ \ \ \ \ \ 
y\mapsto[\mathcal{H}^0(\mathbf{A}_{\mathbf{Y}})|_{\p3\times\{y\}}]\end{equation*}
has $\mathcal{G}(a,1)_0$ as its image (see Proposition 
\ref{Prop 15} below). This family will be used in the next 
Section to prove one of the main results of the paper - the 
rationality of $\overline{\mathcal{G}(a,1)}$. 

To construct the variety $Y$, consider the moduli space of 
$B:=\mathcal{B}(1)$ of locally free null correlation bundles 
on $\p3$. This is well known to be isomorphic to $\mathbb{P}
^5\smallsetminus G(2,4)$, where $G(2,4)$ is the Pl\"ucker 
hyperquadric (see, e.g., \cite[Thm. 4.3.4]{okonek}). 
Moreover, on $\mathbf{B}=\p3\times B$ there is the universal 
family $\boldsymbol{\mathcal{N}}$ of null correlation 
bundles. Consider the vector bundle $\boldsymbol{\mathcal{E}}=
V_2\otimes\calo_{\mathbf{B}}\oplus\boldsymbol{\mathcal{N}}$
and denote $E_b=\boldsymbol{\mathcal{E}}|_{\p3\times\{b\}}$, 
$N_b=\boldsymbol{\mathcal{N}}|_{\p3\times\{b\}},\ b\in B$,
so that
\begin{equation}\label{loc split new}
E_b=V_2\otimes\calo_{\p3}\oplus N_b,\ \ \ \ \ \ b\in B.
\end{equation} 
By linear algebra, there are canonical isomorphisms
$\boldsymbol{\varphi}_{(1)}:V_2\otimes\calo_{\mathbf{B}}
\xrightarrow{\simeq}V_2^{\vee}\otimes\wedge^2V_2\otimes\calo_{
\mathbf{B}}$ and $\boldsymbol{\varphi}_{(2)}:\boldsymbol{
\mathcal{N}}\xrightarrow{\simeq}\boldsymbol{\mathcal{N}}^{\vee}\otimes
\wedge^2\boldsymbol{\mathcal{N}}$. The sheaf $\boldsymbol{
\mathcal{N}}$ fits in the exact triple $0\to\op3\boxtimes\calo
_B(-1)\to\Omega_{\p3}^1(1)\boxtimes\calo_B\to\boldsymbol{
\mathcal{N}}\to0$ globalizing \eqref{nul corr}, so that $\wedge
^2\boldsymbol{\mathcal{N}}\simeq\op3\boxtimes\calo_B(1)$. 
(Here we set $\calo_B(\pm1):=\calo_{G(2,4)}(\pm1)|_B$.)
Consider the varieties $B_1:=\mathbf{V}(\wedge^2V_2^{\vee}
\otimes\calo_B)\smallsetminus\{0-\text{section}\}\xrightarrow{
\pi_1}B$ and $B_2:=\mathbf{V}(\calo_B(-1))\smallsetminus\{0-
\text{section}\}\xrightarrow{\pi_2}B$. Note that the pullback 
of a line bundle onto its total space with the 0-section 
removed trivializes this bundle, we obtain $\pi_2^*\calo_B(1)
\simeq\calo_{B_2}$, hence $(\wedge^2\boldsymbol{\mathcal{N}})
_{\mathbf{B}_2}\simeq\calo_{\mathbf{B}_2}$. Similarly, 
$(\wedge^2V_2\otimes\calo_{\mathbf{B}})_{\mathbf{B}_1}\simeq
\calo_{\mathbf{B}_1}$. Thus,  we obtain the symplectic
structures
\begin{equation*}\label{new sympl str}
\boldsymbol{\varphi}_{\mathbf{B}_1}:=(\boldsymbol{\varphi}_{(1)
})_{\mathbf{B}_1}:V_2\otimes\calo_{\mathbf{B}_1}\xrightarrow{
\simeq}V_2^{\vee}\otimes\calo_{\mathbf{B}_1},\ \ \ \ \ \ \varphi_{
\mathbf{B}_2}:=(\boldsymbol{\varphi}_{(2)})_{\mathbf{B}_2}:
\boldsymbol{\mathcal{N}}_{\mathbf{B}_2}\xrightarrow{\simeq}
\boldsymbol{\mathcal{N}}_{\mathbf{B}_2}^{\vee}.
\end{equation*}
Consider the variety 
$\widetilde{B}:=B_1\times_BB_2$. 
On $\widetilde{\mathbf{B}}$ we obtain from $\boldsymbol{
\mathcal{E}}$ a vector bundle $\boldsymbol{\mathcal{E}}_{
\widetilde{\mathbf{B}}}$ with the symplectic structure $\boldsymbol{\varphi}_{\widetilde{
\mathbf{B}}}$, where
\begin{equation}\label{E cal B}
\boldsymbol{\mathcal{E}}_{\widetilde{\mathbf{B}}}=
V_2\otimes\calo_{\widetilde{\mathbf{B}}}
\oplus\boldsymbol{\mathcal{N}}_{\widetilde{\mathbf{B}}},
\ \ \ \ \ 
\boldsymbol{\varphi}_{\widetilde{\mathbf{B}}}=\boldsymbol{
\varphi}_1\oplus\boldsymbol{\varphi}_2:\ \boldsymbol{
\mathcal{E}}_{\widetilde{\mathbf{B}}}\to\boldsymbol{
\mathcal{E}}^{\vee}_{\widetilde{\mathbf{B}}},
\end{equation}
and
$\boldsymbol{\varphi}_1:=(\varphi_{\mathbf{B}_1})
_{\widetilde{\mathbf{B}}}: 
V_2\otimes\calo_{\widetilde{\mathbf{B}}}\xrightarrow{
\simeq}V_2^{\vee}\otimes\calo_{\widetilde{\mathbf{B}}},
\ 
\boldsymbol{\varphi}_2:=(\varphi_{\mathbf{B}_2})
_{\widetilde{\mathbf{B}}}:
\boldsymbol{\mathcal{N}}_{\widetilde{\mathbf{B}}}\xrightarrow
{\simeq}\boldsymbol{\mathcal{N}}_{\widetilde{\mathbf{B}}}
^{\vee}$. 
By the above, we have the following description of the 
varieties $B_1$, $B_2$ and $\widetilde{B}$:
\begin{equation}\label{descrn cal B1,B2,B}
\begin{split}
& B_1=\{(b,\varphi_1)~|~b\in B,~\varphi_1:V_2\otimes\op3
\xrightarrow{\simeq}V_2^{\vee}\otimes\op3\ \text{is a symplectic structure}\},\\
& B_2=\{(b,\varphi_2)~|~b\in B,~\varphi_2:N_b\xrightarrow
{\simeq}N_b^{\vee}\ \textrm{is a symplectic structure}\},\\
& \widetilde{B}=\{(b,\varphi_1,\varphi_2)~|~(b,\varphi_i)\in
B_i,\ i=1,2\}.
\end{split}
\end{equation}
The following constructions (see \eqref{def Ya}-\eqref{H0=H0 for 
Y}) are parallel to the constructions 
\eqref{def M(a)}-\eqref{H0=H0}. Twisting the equality \eqref{loc 
split new} by $\op3(a)$, we obtain as in \eqref{h0(E(a))}: 
$h^0(E_b(a))=4\binom{a+3}{3}-a-2,\ h^i(E_b(a))=0,\ i>0$. Thus, as 
in \eqref{F}, the sheaf $F_B=p_{B*}(\boldsymbol{\mathcal{E}}(a,0)
)$ is a locally free $\calo_B$-sheaf of rank $r=h^0(E_b(a))$. 
Consider the variety $\mathcal{T}:=\mathbf{P}(F_B^{\vee})$. 
Similarly to \eqref{M(a,k)} we have
\begin{equation}\label{cal bf Ya}
\mathcal{T}=\{(b,\langle\sigma\rangle)\ |\ 
b\in B,\ 0\ne\sigma\in H^0(E_b(a))\}.
\end{equation}
For any point $(b,\langle\sigma\rangle)\in\mathcal{T}$ in view 
of \eqref{loc split new} we may represent 
$\sigma$ as a pair $\sigma=(\sigma_1,\sigma_2),\ \sigma_1\in 
H^0(V_2\otimes\op3(a)),\ \sigma_2\in H^0(N_b(a))$. 
Thus, using the notation \eqref{new notation} we can rewrite
\eqref{cal bf Ya} as $\mathcal{T}=\{(b,[\sigma_1:\sigma_2])\ |\ 
b\in B,\ [\sigma_1:\sigma_2]\in P(H^0(E_b(a)))\}$. On the other 
hand, representing $\sigma$ as a morphism $\sigma:\op3(-a)\to E_b
$, we see that, when $(b,\langle\sigma\rangle)$ runs through 
$\mathcal{T}$, the morphisms $\sigma$, as in \eqref{bold sigma}, 
globalize to a morphism $\boldsymbol{\sigma}_{\boldsymbol{
\mathcal{T}}}:\ \op3(-a)\boxtimes L_{\mathcal{T}}^{\vee}\to
\boldsymbol{\mathcal{E}}_{\boldsymbol{\mathcal{T}}}$ on 
$\boldsymbol{\mathcal{T}}$, where $L_{\mathcal{T}}$ is the 
Grothendieck sheaf $\calo_{\mathcal{T}/B}(1)$. Next, similar to 
\eqref{def M(a)}, we define an open subset $\mathcal{S}$ of 
$\mathcal{T}$ as
\begin{equation}\label{def Ya}
\mathcal{S}:=\{(b,[\sigma_1:\sigma_2])\in\mathcal{T}\ |\ 
(i)\ (\sigma_1,\sigma_2):\op3(-a)\to E_m\ \textrm{is a subbundle morphism and}\ (ii)\ \sigma_1,\sigma_2\ne0\}.
\end{equation}
Note that $\mathcal{S}$ is a nonempty set. (The proof mimics that 
of nonemptiness of the subset $M$ of $T$ given in paragraph after 
\eqref{def M(a)}.) By the Base Change, the sheaf $F_{\widetilde{B}
}=p_{\widetilde{B}*}(\boldsymbol{\mathcal{E}}_{\widetilde{\mathbf{
B}}}(a,0))$ is isomorphic to the sheaf $(F_B)_{\widetilde{B}}$. 
Therefore, from the definition of $\mathcal{T}$ it follows that 
the variety $\widetilde{Y}:=\mathbf{P}(F_{\widetilde{B}}^{\vee})$ 
is isomorphic to $\widetilde{B}\times_B\mathcal{T}$:
\begin{equation}\label{def bold Ba}
\widetilde{Y}\simeq\widetilde{B}\times_B\mathcal{T}.
\end{equation}
Thus by \eqref{descrn cal B1,B2,B} and \eqref{cal bf Ya} we have
$\widetilde{Y}=\{(b,\varphi_1,\varphi_2,[\sigma_1:\sigma_2])\ |\ 
(b,\varphi_1,\varphi_2)\in \widetilde{B},\ [\sigma_1:\sigma_2]\in 
P(H^0(E_b(a)))\}$, and the natural projection $\widetilde{Y}\to 
\widetilde{B},\ (\beta,\langle\sigma\rangle)\mapsto\beta$ is a 
locally trivial $\mathbb{P}^{r-1}$-bundle. We now use \eqref{def 
bold Ba} and the open subset $\mathcal{S}$ of $\mathcal{T}$ to 
define an open subset $Y$ of $\widetilde{Y}$ as
\begin{equation}\label{def Y}
Y:=\widetilde{B}\times_B\mathcal{S}.
\end{equation}
Here, $Y$ is a nonempty open in $\widetilde{Y}$ since $\mathcal{S}
$ is nonempty. It follows that $Y$ is irreducible and dense in 
$\widetilde{Y}$ since $\widetilde{Y}$ is irreducible. In addition,
using \eqref{def Ya} and the above description of $\widetilde{Y}$ 
we obtain:
\begin{equation}\label{descrn Y}
Y=\{(b,\varphi_1,\varphi_2,[\sigma_1:\sigma_2])\in\widetilde{Y}\ 
|(i)\ (\sigma_1,\sigma_2):\op3(-a)\to E_m\ \textrm{is a subbundle morphism and}\ (ii)\ \sigma_1,\sigma_2\ne0\}.
\end{equation}
The morphism $\boldsymbol{\sigma}_{\mathbf{Y}}:=(\boldsymbol{
\sigma}_{\boldsymbol{\mathcal{T}}})_{\mathbf{Y}}$ is included in 
the universal monad on $\mathbf{Y}$:
\begin{equation}\label{AY}
\mathbf{A}_{\mathbf{Y}}:\ 0\to\op3(-a)\boxtimes L_Y^{\vee}
\xrightarrow{\boldsymbol{\sigma}_{\mathbf{Y}}}\boldsymbol{
\mathcal{E}}_{\mathbf{Y}}\xrightarrow{\boldsymbol{\sigma}
_{\mathbf{Y}}^t}\op3(a)\boxtimes L_Y\to0,
\end{equation}
where $L_Y=(L_{\mathcal{T}})_Y$ and $\boldsymbol{
\sigma}_Y^t$ is the composition 
$\boldsymbol{\mathcal{E}}_{\mathbf{Y}}
\xrightarrow{\boldsymbol{\varphi}_{\mathbf{Y}}}
\boldsymbol{\mathcal{E}}_{\mathbf{Y}}^{\vee}
\xrightarrow{\boldsymbol{\sigma}_{\mathbf{Y}}^{\vee}}\op3(a)
\boxtimes L_{Y}$. By construction, for any 
point $(\beta,\langle\sigma\rangle)\in Y$, $\beta=(b,\varphi_1,
\varphi_2)$, the restriction of the monad $\mathbf{A}
_{\mathbf{Y}}$ onto $\p3\times\{(\beta,\langle\sigma\rangle)\}$ 
is isomorphic to the monad $A_{E_b,\varphi_1\oplus
\varphi_2,\sigma}$ in \eqref{mon M}. Hence,
\begin{equation}\label{H0=H0 for Y}
\mathcal{H}^0(\mathbf{A}_{\mathbf{Y}})|_{\p3\times\{(\beta,
\langle\sigma\rangle)\}}=\mathcal{H}^0(A_{E_b,\varphi_1\oplus
\varphi_2,\sigma}),\ \ \ \ \ \ (\beta,\langle\sigma\rangle)\in Y,
\ \ \ \ \ \beta=(b,\varphi_1,\varphi_2).
\end{equation}

Now consider the rank 2 the vector bundle $\mathbf{U}$ on $M$ 
defined in \eqref{bf cal E} and its associated principal frame 
bundle 
\begin{equation*}
I:=\mathbf{Isom}(V_2\otimes\calo_{M},\mathbf{U})
\xrightarrow{\xi}M
\end{equation*}
together with the tautological isomorphism $V_2\otimes\calo_{I}
\xrightarrow{\sim}\mathbf{U}_I$.
Using this isomorphism and applying to \eqref{glob split} the 
functor $\boldsymbol{\xi}^*$ we obtain an isomorphism
\begin{equation}\label{EX=dir sum}
\mathbf{E}_{\mathbf{I}}\cong V_2\otimes\calo
_{\mathbf{I}}\oplus\mathbf{N}_{\mathbf{I}}.
\end{equation}
Besides, by \eqref{bf cal E}, we have a symplectic structure
$\boldsymbol{\varphi}_{\mathbf{I}}:=(\boldsymbol{\varphi}
_{\mathbf{M}})_{\mathbf{I}}:\mathbf{E}_{\mathbf{I}}\xrightarrow{
\simeq}\mathbf{E}_{\mathbf{I}}^{\vee}$ on $\mathbf{E}_{\mathbf{
I}}$. This symplectic structure in view of \eqref{EX=dir sum} 
splits into a direct sum of two symplectic structures
\begin{equation}\label{phi X=dir sum}
\boldsymbol{\varphi}_{\mathbf{I}}=\boldsymbol{\varphi}
_{\mathbf{I},1}\oplus\boldsymbol{\varphi}_{\mathbf{I},2},
\ \ \ \ \ \ \ 
\boldsymbol{\varphi}_{\mathbf{I},1}:V_2\otimes\calo
_{\mathbf{I}}\xrightarrow{\simeq}V_2^{\vee}\otimes\calo
_{\mathbf{I}}, \ \ \ \ \ \ 
\boldsymbol{\varphi}_{\mathbf{I},2}:\mathbf{N}_{\mathbf{I}}
\xrightarrow{\simeq}\mathbf{N}_{\mathbf{I}}^{\vee}.
\end{equation}
Remark that, by the defscription of the morphism $\Psi$ given 
in \eqref{Psi=kf}, we have $\Psi(M)=B$. Now, comparing \eqref{E 
cal B}-\eqref{descrn cal B1,B2,B} with 
\eqref{EX=dir sum}-\eqref{phi X=dir sum}, we obtain a morphism
\begin{equation}\label{Gamma}
\Gamma:I\to\widetilde{B},\ \ \ \ x\mapsto(b,\varphi_1,
\varphi_2),\ \ \ \ \ b=\Psi(\xi(x)),\ \ \ \ \ \varphi_i=
\boldsymbol{\varphi}_{\mathbf{I},i}|_{\p3\times\{x\}},\ i=1,
2,
\end{equation}
such that
\begin{equation}\label{E cal X}
\mathbf{E}_{\mathbf{I}}\cong(\boldsymbol{\mathcal{E}}
_{\widetilde{\mathbf{B}}})_{\mathbf{I}},\ \ \ \ \ \ 
\boldsymbol{\varphi}_{\mathbf{I}}\cong(\boldsymbol{\varphi}_{
\widetilde{\mathbf{B}}})_{\mathbf{I}},
\end{equation}
and these isomorphisms are compatible with the direct sum
decompositions \eqref{EX=dir sum}, \eqref{phi X=dir sum} and
\eqref{E cal B}. From \eqref{Gamma} and 
the surjectivity of $\Psi$ it follows that $\Gamma$ is also 
surjective. Set
\begin{equation}\label{def X etc}
X:=I\times_M S,\ \ \ \ \ \ \ \ \ \ \
Y\xleftarrow{\Gamma_Y}X\xrightarrow{\xi_S}S,\ \ \ \ \ \ \ \ 
\ \ \  F_I:=p_{I*}(\mathbf{E}_{\mathbf{I}}(a,0)).
\end{equation}
From \eqref{F}, \eqref{E cal X}, the isomorphism $F_{\widetilde{B}
}\simeq(F_B)_{\widetilde{B}}$ and the Base Change we obtain
$F_I\simeq (F_{\widetilde{B}})_I$, so that, in view of 
\eqref{def bold Ba} and the equality $T=\mathbf{P}(F^{\vee})$, 
the variety $\widetilde{X}:=\mathbf{P}(F_X^{\vee})$ satisfies the 
isomorphisms
\begin{equation}\label{two fibered prod}
I\times_{M}T\simeq\widetilde{X}\simeq I\times_{\widetilde{B}}
\widetilde{Y}.
\end{equation}
The definition of $X$ (see \eqref{def X etc}) and the left 
isomorphism \eqref{two fibered prod} imply that there exists an 
open embedding $X\hookrightarrow\widetilde{X}$ such that $X=
\widetilde{X}\times_{T}S$. Therefore, comparing the descriptions 
\eqref{descrn Y} and \eqref{def M(a)} of $Y$ and $S$ and using 
the right isomorphism \eqref{two fibered prod}, we obtain:
\begin{equation}\label{X is fiber product}
X\simeq I\times_{\widetilde{B}}Y.
\end{equation} 
This together with \eqref{E cal X} implies that $\mathbf{E}_{
\mathbf{X}}\cong(\boldsymbol{\mathcal{E}}_{\mathbf{Y}})_{\mathbf{
X}}$. Moreover, since $X=I\times_MS$, we have
\begin{equation}\label{AX AY}
\boldsymbol{\mathcal{A}}_{\mathbf{X}}\cong(\mathbf{A}_{\mathbf{Y}
})_{\mathbf{X}},
\end{equation}
where the monads $\boldsymbol{\mathcal{A}}$ and 
$\mathbf{A}_{\mathbf{Y}}$ were defined in \eqref{bf A} and 
\eqref{AY}, respectively. Consider the modular morphisms
\begin{equation}\label{PhiX,PhiY} 
\Phi_X:X\to\mathcal{B}(a^2+1),\ \ \ \ \ \ \ \ 
\Phi_Y:Y\to\mathcal{B}(a^2+1),
\end{equation}
defined by the (families of) sheaves $\mathcal{H}^0(
\boldsymbol{\mathcal{A}}_{\mathbf{X}})$, $\mathcal{H}^0(
\mathbf{A}_{\mathbf{Y}})$, respectively. From \eqref{AX AY}, 
\eqref{X is fiber product} and \eqref{def X etc} it follows that 
$\Phi_X$ factors through $\Gamma_Y$ and through $\xi_S$ as:
$\Phi_X=\Phi_Y\circ\Gamma_Y=\Phi_{S}\circ\xi_S.$
Here, $\Phi_{S}:S\to\calb(a^2+1)$ is the modular morphism
\eqref{Phi S,Phi tilde S}, $\xi_S$ in \eqref{def X etc} is 
surjective by the surjectivity of $\xi$, and $\Gamma_Y$ is 
surjective as $\Gamma$ is surjective. Hence,
\begin{equation}
\mathcal{G}(a,1)_0=\Phi_{S}(S)=
\Phi_Y(Y).
\end{equation} 
On the other hand, by Proposition \ref{descrn of G(a,1)}, 
$\mathcal{G}(a,1)_0$ is dense in $\overline{\mathcal{G}(a,1)}$.
We thus obtain
\begin{Prop}\label{Prop 15}
Let $\Phi_Y:Y\to\mathcal{B}(a^2+1)$ be the modular morphism
defined by the family of sheaves $\mathcal{H}^0(\mathbf{A}
_{\mathbf{Y}})$, where $\mathbf{A}_{\mathbf{Y}}$ is the 
monad \eqref{AY}. Then $\Phi_Y(Y)$ is dense in $\overline{
\mathcal{G}(a,1)}$.
\end{Prop}

\vspace{2mm}

\section{Series of rational irreducible components of the
moduli spaces $\calb(a^2+1)$}\label{Rationality}

\vspace{2mm}

Consider the variety $Y$ defined in \eqref{def Y}. We first will 
relate to $Y$ a new variety $\mathcal{P}_a$, together with a 
natural projection $\pi:\ Y\to \mathcal{P}_a$. In this section we 
will relate the morphism $\pi$ to the modular morphism $\Phi_Y:Y
\to\mathcal{B}(a^2+1)$ (for the precise formulation see Theorem 
\ref{Thm 18}). For this, take any point $y\in Y$. By 
\eqref{descrn Y}, $y$ is a collection of data
$$
y=(b,\varphi_1,\varphi_2,[\sigma_1:\sigma_2]),
$$
where (i) $b\in B$, (ii) $\varphi_1:V_2\otimes\op3\xrightarrow{
\simeq}V_2^{\vee}\otimes\op3$ and $\varphi_2:N_b\xrightarrow{
\simeq}N_b^{\vee}$ are symplectic isomorphisms:
\begin{equation}\label{phi1}
\varphi_1\in H^0(\wedge^2(V_2\otimes\op3)^{\vee})
\smallsetminus\{0\}=\wedge^2V_2^{\vee}\smallsetminus\{0\}
\cong\mathbf{k}^{\times},\ \ \ \ \ \ 
\varphi_2\in H^0(\wedge^2N_b^{\vee})\smallsetminus\{0\}=
H^0(\op3)\smallsetminus\{0\}\cong\mathbf{k}^{\times},
\end{equation}
(iii) $\sigma_1$ and $\sigma_2$ are:
\begin{equation}\label{sigma1}
0\ne\sigma_1\in H^0(V_2\otimes\op3(a))=\Hom(V_2^{\vee},W_a),
\ \ \ \ \ \ W_a:=H^0(\op3(a)),\ \ \ \ \ \ \ \ 
0\ne\sigma_2\in H^0(N_b(a)),
\end{equation}
(iv) $\sigma=(\sigma_1,\sigma_2):\op3(-a)
\to V_2\otimes\op3\oplus N_b$ is a
subbundle morphism.
In $\Hom(V_2^{\vee},W_a)$ consider an open subset
$\Hom^{\mathrm{in}}(V_2^{\vee},W_a):=\{\sigma_1\in \Hom(V_2^{
\vee},W_a)\ |\ \sigma_1:V_2^{\vee}\to W_a\ \textrm{is a 
monomorphism}\}$. One can easily see (use the argument in 
paragraph after \eqref{def M(a)}) that 
\begin{equation*}
\Hom^{\mathrm{in}}(V_2^{\vee},W_a)=\{\sigma_1\in \Hom(V_2^{
\vee},W_a)\ |\ \dim (\sigma_1)_0=1\}.
\end{equation*}
Besides, note that the group $GL(V_2)$ naturally acts on 
$\Hom^{\mathrm{in}}(V_2^{\vee},W_a)$ via its action on 
$V_2^{\vee}$, and we have an isomorphism
\begin{equation}\label{Gr}
\Hom^{\mathrm{in}}(V_2^{\vee},W_a)/GL(V_2)\xrightarrow{\simeq
}Gr(2,W_a)
\end{equation}
and the factorization morphism 
\begin{equation}\label{tau 1}
\tau_1:\Hom^{\mathrm{in}}(V_2^{\vee},W_a)\to Gr(2,W_a),\ \ \ 
\sigma_1\mapsto\mathrm{im}(\sigma_1:V_2^{\vee}\hookrightarrow
W_a).
\end{equation}
Next, as it was mentioned in Section \ref{G(a,1)} (see paragraph 
after \eqref{def M(a)}), the set $H^0(N_b(a))^*:=\{\sigma_2\in 
H^0(N_b(a))\ |\ \dim (\sigma_2)_0$
$=1\}$ is open dense in $H^0(N_b(a)
)$. Besides, it is clearly invariant under the action of the 
group $\mathrm{Aut}(N_b(a))=\mathbf{k}^{\times}$. (Recall that 
the null correlation bundle $N_b$ is stable and therefore simple, 
i. e., $\mathrm{End}(N_b(a))=\mathbf{k}
\cdot\mathrm{id}$.) Hence,
\begin{equation}\label{PH0Nb(a)}
P(H^0(N_b(a)))^*=H^0(N_b(a))^*/\mathrm{Aut}(N_b(a))\overset{
\mathrm{open}}{\hookrightarrow}P(H^0(N_b(a)))\simeq\mathbb{P}^r,
\end{equation}
where $r=2\binom{a+3}{3}-a-3$, and we have the factorization 
morphism
\begin{equation}\label{tau 2}
\tau_2:H^0(N_b(a))^*\to P(H^0(N_b(a)))^*,\ \ \ \ \ 
\sigma_2\mapsto\langle\sigma_2\rangle.
\end{equation}
Now the above condition (iv) imposed on $(\sigma_1,\sigma_2)$
can be rewritten in the form:
\begin{equation}\label{def Hba}
(\sigma_1,\sigma_2)\in H_{b,a}:=\{(\sigma_1,\sigma_2)\in
\Hom^{\mathrm{in}}(V_2^{\vee},W_a)\times H^0(N_b(a))^*\ |\  
(\sigma_1)_0\cap(\sigma_2)_0=\emptyset\}.
\end{equation}
Clearly, $H_{b,a}$ is a dense open subset of $\Hom^{\mathrm{
in}}(V_2^{\vee},W_a)\times H^0(N_b(a))^*$. This subset is
invariant under the action of the group $\mathbf{k}^{\times}$
by homotheties. Therefore, denoting 
$P(H_{b,a}):=H_{b,a}/\mathbf{k}^{\times}$
and using \eqref{tau 1} and \eqref{tau 2}, we obtain the 
factorization morphism 
\begin{equation}\label{tau}
\tau:\ P(H_{b,a})\to Gr(2,W_a)\times\mathbb{P}(H^0(N
_b(a)))^*,\ \ \ \ 
[\sigma_1:\sigma_2]\mapsto(\tau_1(\sigma_1),
\tau_2(\sigma_2)).
\end{equation}
To globalize the above pointwise (w.r.t. $b\in B$) constructions 
over $B$, set $\mathcal{K}:=p_{B*}(\boldsymbol{\mathcal{N}}(a,0))
$. The variety $\mathbf{P}(\mathcal{K}^{\vee})$ has the 
description $\mathbf{P}(\mathcal{K}^{\vee})=\{(b,\langle\sigma_2
\rangle)\ |\ b\in B,\ \langle\sigma_2\rangle\in P(H^0(N_b(a)))\}$.
Consider its dense open subset
$$
\Pi_a:=\{(b,\langle\sigma_2\rangle)\in\mathbf{P}(\mathcal{K}
^{\vee})~|~\langle\sigma_2\rangle\in\mathbb{P}(H^0(N_b(a)))^*\}
$$
and set
\begin{equation}\label{Ga}
\mathcal{G}_a:=Gr(2,W_a)\times\Pi_a,\ \ \ \ \ \ 
\mathcal{G}_a=\{(b,V,\langle\sigma_2\rangle)~|~V\in Gr(2,W_a),
(b,\langle\sigma_2\rangle)\in\Pi_a\}.
\end{equation}
By construction, $\mathcal{G}_a$ is a rational variety. Next, 
remark that, comparing the definitions \eqref{def Ya} and 
\eqref{def Hba} of $\mathcal{S}$ and $H_{b,a}$, we obtain
$$
\mathcal{S}=\{(b,[\sigma_1:\sigma_2])~|~b\in B,~[\sigma_1:
\sigma_2]\in\mathbb{P}(H_{b,a})\}.
$$
Thus, by \eqref{tau}, we have a well-defined morphism
\begin{equation}\label{tau global}
\tau:\ \mathcal{S}\to\mathcal{G}_a,\ (b,[\sigma_1:\sigma_2])
\mapsto(b,\tau_1(\sigma_1),\tau_2(\sigma_2)).
\end{equation}
Consider the group 
$\tilde{G}=GL(V_2)\times\mathbf{k}^{\times}
$, its normal subgroup $G'=\{(\rho\cdot\mathrm{id}_{V_2},\rho)
~|~\rho\in\mathbf{k}^{\times}\}$, and let 
\begin{equation}\label{group G}
G=\tilde{G}/G'
\end{equation}
be the factor group. We will use the following notation for 
elements of $G$:\ \ $[g_1:\lambda]:=(g_1,\lambda)G'=\{(\rho 
g_1,\rho\lambda)~|~\rho\in\mathbf{k}^{\times}\},\ \ (g_1,\lambda)
\in\tilde{G}$. The group $G$ naturally acts on 
$\mathcal{S}$ as:
\begin{equation}\label{G-action on Ya}
a_{\mathcal{S}}:\ \mathcal{S}\times G\to\mathcal{S},\ \ \ 
\ ((b,[\sigma_1:\sigma_2]),[g_1:\lambda])\mapsto
(b,[g_1\circ\sigma_1:\lambda\sigma_2]),
\end{equation}
and formulas \eqref{Gr}-\eqref{tau global} show that
$\mathcal{G}_a=\mathcal{S}/G$
and the morphism $\tau:\ \mathcal{S}\to\mathcal{G}_a$ in 
\eqref{tau global} is the quotient morphism for this action
and it is a principal $G$-bundle. Therefore in view of 
\eqref{h0(E(a))} we have:
\begin{equation}\label{dim Ga}
\dim\mathcal{G}_a=\dim P(H_{b,a})+\dim B-\dim G=
4\binom{a+3}{3}-a-2.
\end{equation}
The principal $G$-bundle $\mathcal{S}\xrightarrow{\tau}\calg_a$ 
by construction is locally trivial, hence there exists an open 
dense subset $U$ of $\calg_a$ and a section $U\overset{s}
{\hookrightarrow}\mathcal{S}$ of the projection $\tau|_{\tau^{-1}
(U)}:\tau^{-1}(U)\to U$:
\begin{equation}\label{cartesian 2}
\xymatrix{
& \mathcal{S}\ar[d]^{\tau}\\
U \ar@{^{(}->}[ur]^-{s}\ar@{^{(}->}[r]^{\textrm{open}} & 
\mathcal{G}_a.}
\end{equation} 
Here $U$ is rational since $\mathcal{G}_a$ is rational as it
was mentioned above.

Now consider the variety
$\mathbf{P}(\wedge^2(V_2\otimes\calo_{\mathbf{B}})\oplus
\wedge^2\boldsymbol{\mathcal{N}})$ together with the
embeddings
$$
\mathbf{P}(\wedge^2(V_2\otimes\calo_{\mathbf{B}}))
\hookrightarrow\mathbf{P}(\wedge^2(V_2\otimes\calo_{\mathbf{B
}})\oplus\wedge^2\boldsymbol{\mathcal{N}})
\hookleftarrow\mathbf{P}(\wedge^2\boldsymbol{\mathcal{N}})
$$
and denote $P\widetilde{B}:=\mathbf{P}(\wedge^2(V_2\otimes\calo
_{\mathbf{B}})\oplus\wedge^2\boldsymbol{\mathcal{N}})
\smallsetminus\{\mathbf{P}(\wedge^2(V_2\otimes\calo_{\mathbf{
B}}))\sqcup\mathbf{P}(\wedge^2\boldsymbol{\mathcal{N}})\}$.
By construction, the natural projection $P\widetilde{B}\to B$ is 
a locally trivial fibration with fiber
\begin{equation}\label{fiber T}
\mathbf{F}\simeq\mathbb{P}^1\smallsetminus\{\textrm{2 
points}\}.
\end{equation}
Using the description \eqref{descrn cal B1,B2,B} of the
varieties $B_1$, $B_2$ and the notation \eqref{new notation} 
in which we put $\varphi_1,\varphi_2$ in place of $\sigma_1,
\sigma_2$, we obtain $\mathbb{P}\widetilde{B}=\{(b,[\varphi_1:
\varphi_2])~|~(b,\varphi_i)\in B_i,~i=1,2\}$. Remark that the 
group $\mathbf{k}^{\times}$ naturally acts on
$\widetilde{B}$ as
\begin{equation}\label{k*-action}
\widetilde{B}\times\mathbf{k}^{\times}\to\widetilde{B},\ \ \ 
\ \ ((b,\varphi_1,\varphi_2),\lambda)\mapsto(b,\lambda
\varphi_1,\lambda\varphi_2),
\end{equation}
(here we use the description \eqref{descrn cal B1,B2,B} of 
$\widetilde{B}$), so that 
\begin{equation}\label{quot by k*}
P\widetilde{B}=\widetilde{B}/\mathbf{k}^{\times},
\end{equation}
and we have the factorization morphism
\begin{equation}\label{piB}
\pi_{\widetilde{B}}:\ 
\widetilde{B}\to P\widetilde{B},\ \ \ 
\  (b,\varphi_1,\varphi_2)\mapsto(b,[\varphi_1:\varphi_2]).
\end{equation}
Consider the varieties $PY:=P\widetilde{B}\times_B\mathcal{S}=\{
(b,[\varphi_1:\varphi_2],[\sigma_1:\sigma_2])~|~(b,[\varphi_1:
\varphi_2])\in P\widetilde{B},~(b,[\sigma_1:\sigma_2])\in\mathcal{
S}\}$ and $\calp_a:=P\widetilde{B}\times_B\mathcal{G}_a=\{(b,
[\varphi_1:\varphi_2],V,\langle\sigma_2\rangle)~|~(b,[\varphi_1:
\varphi_2])\in P\widetilde{B},~(b,V,\langle\sigma_2\rangle)\in
\mathcal{G}_a\}$, where $\mathcal{G}_a$ was defined in 
\eqref{Ga}. From \eqref{dim Ga} and \eqref{fiber T} we have
\begin{equation}\label{dim Pa}
\dim\calp_a=\dim\mathcal{G}_a+\dim 
\mathbf{F}=4\binom{a+3}{3}-a-1.
\end{equation}
Note that the local triviality of the fibration $P\mathcal{B}\to B$ implies that the natural projection
\begin{equation}\label{prY}
pr_{\mathcal{Y}}:\ PY\to\mathcal{S}
\end{equation}
is a locally trivial fibration with fiber $\mathbf{F}$ given 
in \eqref{fiber T}.

The morphism $\pi_{\widetilde{B}}$ in \eqref{piB} induces 
the morphism
\begin{equation}\label{piY}
\pi_Y:\ Y\to PY,\ (b,\varphi_1,\varphi_2,[\sigma_1:
\sigma_2])\mapsto(b,[\varphi_1:\varphi_2],[\sigma_1:\sigma_2]
),
\end{equation}
and from \eqref{k*-action}-\eqref{piB} it follows that 
$\pi_Y$ is a factorization morphism of the following 
$\mathbf{k}^{\times}$-action on $Y$:
\begin{equation}\label{k*-action on Y}
a_Y:\ Y\times\mathbf{k}^{\times}\to Y,\ \ \ \ \ \ 
((b,\varphi_1,\varphi_2,[\sigma_1:\sigma_2]),\lambda)\mapsto
(b,\lambda\varphi_1,\lambda\varphi_2,[\sigma_1:\sigma_2]).
\end{equation}
Respectively, the morphism $\tau:Y_a\to\mathcal{G}_a$ defined
in \eqref{tau global} induces a morphism
\begin{equation}\label{tauY}
\tau_Y:\ PY\to\mathcal{P}_a,\ (b,[\varphi_1:\varphi
_2],[\sigma_1:\sigma_2])\mapsto (b,[\varphi_1:\varphi
_2],\tau_1(\sigma_1),\tau_2(\sigma_2)).
\end{equation}
Define the morphism $\pi:Y\to\mathcal{P}_a$ as the composition
\begin{equation}\label{quotient morphism pi}
\pi=\tau_Y\circ\pi_Y:\ Y\to\mathcal{P}_a,\ \ \ \ \ 
(b,\varphi_1,\varphi_2,[\sigma_1:\sigma_2])\mapsto
(b,[\varphi_1:\varphi_2],\tau_1(\sigma_1),\tau_2(\sigma_2)).
\end{equation}
We will now proceed to the study of the fibers of the morphism $\pi$.
\begin{Def}\label{equiv on Y}
Introduce on $Y$ the following equivalence relation:
\begin{equation}\label{equiv reln1}
y=(b,\varphi_1,\varphi_2,[\sigma_1:\sigma_2])\ \sim\ 
(\tilde{b},\tilde{\varphi}_1,\tilde{\varphi}_2,
[\tilde{\sigma}_1:\tilde{\sigma}_2])=\tilde{y}
\end{equation}
if there exists an isomorphism of symplectic monads
$A_y$
and $A_{\tilde{y}}$,
i. e., a commutative diagram with rows $A_y$ and $A_{\tilde{y}}$:
\begin{equation}\label{isom of monads}
\xymatrix{
A_y:~~\op3(-a)\ar@{>->}[r]^-{(\sigma_1,\sigma_2)}\ar[d]
_{h_-}^-{\simeq} & V_2\otimes\op3\oplus N_b
\ar@{>>}[rr]^-{(\sigma_1^{\vee}\circ\varphi_1,\sigma
_2^{\vee}\circ\varphi_2)}\ar[d]_{(g_1,g_2)}^-{\simeq} & & 
\op3(a)\ar[d]_-{h_+}^-{\simeq} \\
A_{\tilde{y}}:~~\op3(-a)\ar@{>->}[r]^-{(\tilde{\sigma}_1,
\tilde{\sigma}_2)} & V_2\otimes\op3\oplus 
N_{\tilde{b}}\ar@{>>}[rr]
^-{(\tilde{\sigma}_1^{\vee}\circ\tilde{\varphi}_1,\tilde{
\sigma}_2^{\vee}\circ\tilde{\varphi}_2)} & & \op3(a).}
\end{equation}
We denote by $[y]=[b,\varphi_1,\varphi_2,[\sigma_1:\sigma_2]]$ 
the equivalence class of a point $y=(b,\varphi_1,\varphi_2,
[\sigma_1:\sigma_2])\in Y$ under this equivalence relation.
\end{Def}
Note that, in diagram \eqref{isom of monads}, one has
\begin{equation}\label{g1 in GL}
g_1\in\mathrm{Isom}(V_2\otimes\op3,V_2\otimes\op3)\cong  
GL(V_2);
\end{equation}
and $g_2\in\mathrm{Isom}(N_b,N_{\tilde{b}})$ which
in view of the stability of $N_b$ implies that
\begin{equation}\label{g2 in k^x}
b=\tilde{b}, \ \ \ \ \ \ \ g_2=\lambda\cdot\mathrm{id}_{N_b},
\ \ \ \ \ \ \lambda\in\mathbf{k}^{\times};
\end{equation}
besides, the isomorphisms $h_-,\ h_+$ are multiplications
by some constants $\mu,\ \nu\in\mathbf{k}^{\times}$,
respectively:
\begin{equation}\label{h+-}
h_-=\mu\cdot\mathrm{id}_{\op3(-a)},\ \ \ \ \ \ 
h_+=\nu\cdot\mathrm{id}_{\op3(a)}.
\end{equation}
Furthermore, in view of \eqref{phi1}, \eqref{g1 in GL}, 
\eqref{g2 in k^x} and the symplecticity of $\varphi_1,\ \varphi_2
$, we have in \eqref{isom of monads}
\begin{equation}\label{lambda i}
\tilde{\varphi}_1=\lambda_1\varphi_1,\ \ \ \  
\tilde{\varphi}_2=\lambda_2\varphi_2, \ \ \ \  
\lambda_1,\lambda_2\in\mathbf{k}^{\times},\ \ \ \ 
g_1^{\vee}\circ\varphi_1\circ g_1=\det(g_1)\varphi_1,\ \ \ \  
g_2^{\vee}\circ\varphi_2\circ g_2=\lambda^2\varphi_2.
\end{equation}
The leftmost square of diagram \eqref{isom of monads} together with \eqref{h+-} impliies:
\begin{equation}\label{left reln}
\tilde{\sigma}_1=\frac{1}{\mu}g_1\circ\sigma_1,\ \ \ \ \ \ 
\tilde{\sigma}_2=\frac{\lambda}{\mu}\sigma_2.
\end{equation}
Respectively, the rightmost square of diagram
\eqref{isom of monads} yields $\nu\sigma_1^{\vee}\circ\varphi_1
=\tilde{\sigma}_1^{\vee}\circ\tilde{\varphi}_1\circ g_1,\ \ 
\nu\sigma_2^{\vee}\circ\varphi_2=\lambda\tilde{\sigma}_2^{\vee}
\circ\tilde{\varphi}_2$. Substituting 
\eqref{h+-}-\eqref{left reln} into the last equalities we obtain 
the relations $\nu=\frac{\lambda_1\det(g_1)}{\mu}$ and $\nu=
\frac{\lambda_2\lambda^2}{\mu}$. Whence $\lambda_1\det(g_1)=
\lambda_2\lambda^2$. This relation shows that the $G$-action 
\eqref{G-action on Ya} on $\mathcal{S}$ lifts to the following 
$G$-action on $PY$:
\begin{equation}\label{G-action on PY}
a_{PY}:\ PY\times G\to PY,\ \
((b,[\varphi_1:\varphi_2],[\sigma_1:\sigma_2]),[g_1:\lambda]
)\mapsto(b,[\frac{\varphi_1}{\det(g_1)}:\frac{\varphi_2}
{\lambda^2}],[g_1\circ\sigma_1:\lambda\sigma_2]).
\end{equation}
Thus, $\calp_a=PY/G$ and the morphism
\begin{equation}\label{tauY again}
\tau_Y:\ PY\to\mathcal{P}_a
\end{equation}
in \eqref{tauY} is the quotient morphism for this action and it 
is a locally trivial principal $G$-bundle. We therefore have a 
commutative diagram 
\begin{equation*}\label{cartesian 3}
\xymatrix{
PY\ar[r]^-{\tau_Y}\ar[d]^{pr_\mathcal{Y}}
 & \mathcal{P}_a\ar[d]^{pr_{\mathcal{G}}}\\
\mathcal{S} \ar[r]^-{\tau} & \mathcal{G}_a,}
\end{equation*} 
where $pr_{\mathcal{G}}$ is a natural projection. Since by
\eqref{prY} the morphism $pr_{\mathcal{Y}}:\ PY\to
\mathcal{S}$ is a locally trivial fibration with fibre 
$\mathbf{F}$, the open subset $U$ of $\calg_a$ and the 
section $U\overset{s}{\hookrightarrow}\mathcal{S}$
in the diagram \eqref{cartesian 2}, after possible shrinking 
$U$, can be lifted to an open section $\mathbf{F}\times 
U\overset{\tilde{s}}{\hookrightarrow}PY$ 
of the projection $\tau_Y:PY\to\mathcal{P}_a$:
\begin{equation*}\label{cartesian 4}
\xymatrix{
& PY\ar[d]^{\tau_Y}\\
\mathbf{F}\times U \ar@{^{(}->}[ur]^-{\tilde{s}}\ar@{^{(}->}
[r]^-{\textrm{open}} & \mathcal{P}_a.}
\end{equation*} 

Since $\mathbf{F}$ is rational by \eqref{fiber T} and $U$ is 
rational, it follows that 
\begin{equation}\label{Pa rational}
\mathcal{P}_a\ \ \textrm{is rational}.
\end{equation}

Next, from \eqref{piY}-\eqref{k*-action on Y},
\eqref{G-action on PY} and \eqref{tauY again} it follows that
the morphism $\pi:Y\to\mathcal{P}_a$ in \eqref{quotient 
morphism pi} is the quotient morphism of the following 
action of the group $\overline{G}:=G\times\mathbf{k}^{\times}$ on 
$Y$, where $G=\widetilde{G}/G'$ was defined in \eqref{group G}:
\begin{equation}\label{overlineG-action}
a_Y:\ Y\times\overline{G}\to Y, \ \ \ \ \ 
((b,\varphi_1,\varphi_2,[\sigma_1:\sigma_2]),([g_1:\lambda],\mu))
\mapsto(b,\frac{\mu\varphi_1}{\det(g_1)},
\frac{\mu\varphi_2}{\lambda^2},[g_1\circ\sigma_1:\lambda
\sigma_2]), \ \ \ \ \ \ \overline{G}=G\times\mathbf{k}^{\times}.
\end{equation}
Moreover,
\begin{equation}\label{pi is princ Gbdl}
\pi:Y\to\mathcal{P}_a=Y/\overline{G} \ \ \ \textrm{is a 
principal}\ \overline{G}\textrm{-bundle},
\end{equation}
and computations \eqref{g1 in GL}-\eqref{G-action on 
PY} show that the equivalence class $[y]$ of any point 
$y\in Y$ is the $\overline{G}$-orbit of $y$:
\begin{equation}
[y]=a_Y(\{y\}\times \overline{G})=\pi^{-1}(\pi(y)),\ \ \ \ 
\ \ y\in Y.
\end{equation}
In other words, $\mathcal{P}_a$ is the set of equivalence 
classes of points of $Y$:
\begin{equation}\label{Pa=}
\mathcal{P}_a=\{[y]~|~y\in Y\}.
\end{equation}

Remark that, by Corollary \ref{Okoneksimpletico}, the 
equality
$[y]=[\tilde{y}]$, i. e. the isomorphism of symplectic monads 
$A_y$ and $A_{\tilde{y}}$ in \eqref{isom of monads} is 
equivalent to the isomorphism of their cohomology rank 2 
bundles as symplectic bundles $(\mathcal{H}^0(A_y),\psi_y)$ 
and $(\mathcal{H}^0(A_{\tilde{y}}),\psi_{\tilde{y}})$, i. e., 
to the commutativity of the diagram
\begin{equation}\label{comm sympl coho}
\xymatrix{
\mathcal{H}^0(A_y)\ar[r]^-{\psi_y}_-{\simeq}\ar[d]^{\simeq}
_f & \mathcal{H}^0(A_y)^{\vee}\\
\mathcal{H}^0(A_{\tilde{y}})\ar[r]^-{\psi_{\tilde{y}}}
_-{\simeq} & \mathcal{H}^0(A_{\tilde{y}})^{\vee}\ar[u]
^{f^{\vee}}_{\simeq}.}
\end{equation}
Here $\psi_y$, respectively, $\psi_{\tilde{y}}$, is a 
symplectic isomorphism induced by the symplectic isomorphism 
of the monad $A_y$ with its dual $A_y^{\vee}$, respectively, 
of $A_{\tilde{y}}$ with $A_{\tilde{y}}^{\vee}$. Thus, 
denoting by $[\mathcal{H}^0(A_y),\psi_y]$ the isomorphism 
class of the pair $(\mathcal{H}^0(A_y),\psi_y)$, we have:
\begin{equation}\label{[y]=}
[y]=[\mathcal{H}^0(A_y),\psi_y]=[\mathcal{H}^0(A_y)].
\end{equation}
This together with \eqref{pi is princ Gbdl}-\eqref{Pa=} shows
that the modular morphism
$$
\Phi_Y:\ Y\to\calb(a^2+1),\ \ \ \ y\mapsto[\mathcal{H}
^0(A_y)]
$$
factors through an injective map $\Theta:\calp_a\to\calb
(a^2+1)$, i. e.
\begin{equation}\label{PhiY=}
\Phi_Y=\Theta\circ\pi.
\end{equation}
Since $Y$ is clearly smooth, the map $\Theta$ is actually a
morphism. This outcomes from the following well known general
result. (For the convenience of the reader we give its proof 
here.)
\begin{Lema}\label{lemma1}
\textit{Let $X,\ Y,\ Z$ be quasiprojective varieties with
$Y$ smooth, and let $a:X\to Y$ and $b:X\to Z$ be morphisms
such that $a$ is surjective and $b$ is constant on the 
fibers of $a$. Then there exists  a morphism $f:Y\to Z$ such 
that $b=f\circ a$.} 
\end{Lema}
\begin{proof}
Consider the morphism $g:X\to Y\times Z,\ x\mapsto(a(x),b(x)
)$, and let $Y\xleftarrow{a'}Y\times Z\xrightarrow{b'}Z$ be 
the projections onto factors  so that $a=a'\circ g$ and 
$b=b'\circ g$. Since $b$ is constant on the fibers of $p$, 
it follows that $\tilde{a}:=a'|_{g(X)}: g(X)\to Y$ is a 
bijection. Therefore, as $Y$ is smooth, $\tilde{a}$ is an 
isomorphism (see, e.g., \cite[Ch.2, Section 4.4, Thm. 
2.16]{S}). The desired morphism $f$ is now the composition 
$f=b'\circ\tilde{a}^{-1}$.
\end{proof}

Now Proposition \ref{Prop 15} together with \eqref{dim Pa},
\eqref{Pa rational}, \eqref{pi is princ Gbdl} and 
\eqref{PhiY=} yields 

\begin{Teo}\label{Thm 18}
There exists an injective morphism $\Theta:\calp_a
\hookrightarrow\calb(a^2+1)$ such that the modular morphism
$\Phi_Y:Y\to\calb(a^2+1)$ factorizes as
\begin{equation}
\Phi_Y:\ Y\xrightarrow{\pi}\calp_a\overset{\Theta}
{\hookrightarrow}\calb(a^2+1),
\end{equation}
where $\pi:Y\to\calp_a$ is a principal $\overline{G}$-bundle with 
the group $\overline{G}$ defined in \eqref{overlineG-action}. 
The variety $\overline{\mathcal{G}(a,1)}$ containing the rational 
variety $\mathcal{G}(a,1)_0=\Theta(\calp_a)$ as a dense subset is 
rational of dimension $4\binom{a+3}{3}-a-1$. 
\end{Teo}

We next obtain the following important formula.
\begin{Lema}\label{h1end}
For every $[\mathcal{E}]\in\mathcal{G}(a,1)_0$ with $a\ge2$, 
it holds 
$$ 
h^1(\lend(\cale))=4\cdot{a+3\choose 3}-a-1 +\varepsilon(a), 
$$
where $\varepsilon(a)=1$ when $a=3$, and $\varepsilon(a)=0$ 
when $a\ne3$.
\end{Lema}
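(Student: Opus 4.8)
The plan is to compute $h^1(\End(E))$ by exploiting the monad $0 \to \op3(-a) \xrightarrow{\sigma} \tilde{E} \xrightarrow{\tau} \op3(a) \to 0$ from Proposition~\ref{bijection}, where $(\tilde{E},\varphi)$ is a rank $4$ symplectic instanton bundle of charge $1$, together with the structural results of Section~\ref{SIB} (Lemma~\ref{directsum}, Corollary~\ref{hi S2E}, Lemma~\ref{h0endE}) which describe $\tilde{E}$ completely. First I would split the monad into its two short exact sequences
$$ 0 \to \op3(-a) \xrightarrow{\sigma} K \to E \to 0, \qquad 0 \to K \to \tilde{E} \xrightarrow{\tau} \op3(a) \to 0, $$
where $K = \ker\tau$, and then compute $h^i(\End(E)) = h^i(E\otimes E^\vee) = h^i(E\otimes E)$ (using $E^\vee\simeq E$) by a two-step dévissage: first relate $E\otimes E$ to $K\otimes E$ and then $K\otimes E$ to $\tilde{E}\otimes E$, tensoring each of the two sequences above by $E$. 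One also needs the sequences twisted by $\op3(\pm a)$, so along the way I will need $h^i(E(a))$ and $h^i(E(-a))$; these follow from the Euler-characteristic / Riemann--Roch computation for $E\in\calb(a^2+1)$ together with vanishing of the higher cohomology obtained from the monad, and $h^0(E(a))$, $h^1(E(a))$ are controlled by sequence~(\ref{split h0}) and the fact that $\h^1_*(E)$ has generators in degrees $-a$ and $-1$ only.

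The heart of the computation is to control $h^i(\tilde{E}\otimes E)$. Here I would tensor the monad-sequences for $E$ by $\tilde{E}$ and use that $\tilde{E}\otimes\tilde{E} = \Lambda^2\tilde{E}\oplus S^2\tilde{E}$, whose cohomology is given by Corollary~\ref{hi S2E} ($h^0(S^2\tilde E)=3$, $h^1(S^2\tilde E)=5$, $h^2(S^2\tilde E)=0$) and by $h^0(\Lambda^2\tilde E)=2$ with $h^i(\Lambda^2\tilde E)=0$ for $i\ge1$ (the latter because $\tilde E$ is a rank $4$ instanton so $\h^1_*$ of $\Lambda^2\tilde E$ vanishes in the relevant range; this can be extracted from sequence~(\ref{sum}) and $h^i(N)=0$). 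I also need $h^i(\tilde{E}(\pm a))$, which follow from (\ref{sum}) and the cohomology of the null correlation sheaf $N(\pm a)$ — again standard, using $h^i(N)=0$ and the triple (\ref{nul corr}) or (\ref{triple for N}). Assembling the long exact sequences, the alternating sums give $\chi(\End(E))$ cleanly, and the vanishing statements kill $h^2$ and $h^3$ except possibly for a single jump; the binomial term $4\binom{a+3}{3}$ will appear as $4\,h^0(\op3(a))$ coming from the two contributions $\Hom(\op3(-a),\tilde E(a))$-type terms and their duals, while the $-a-1$ is a correction from $h^0$ and $h^1$ of the various line-bundle twists.

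The main obstacle — and the reason the correction term $\varepsilon(a)$ appears — will be pinning down $h^2(\End(E))$ (equivalently, by Serre duality and self-duality, $h^1(\End(E)(-4))$), since the naive dévissage only bounds it and leaves an ambiguity precisely when $a=3$: there the twist $\op3(a-4)=\op3(-1)$ interacts with the degree $-1$ generator of $\h^1_*(E)$, producing one extra section somewhere in the diagram chase that does not cancel. So the plan is: (i) compute $\chi(\End(E))$ exactly by Riemann--Roch or by adding up Euler characteristics along the dévissage; (ii) show $h^0(\End(E))=1$ ($E$ is stable by the Proposition, hence simple) and $h^3(\End(E))=h^0(\End(E)(-4))=0$; (iii) show $h^2(\End(E)) = \varepsilon(a)$ by carefully tracking the connecting maps in the twisted sequences, isolating where the degree $-a$ versus degree $-1$ generators of $\h^1_*(E)$ contribute and checking that everything cancels except in the coincidence $a-4=-a+\text{(something)}$ forcing $a=3$; (iv) conclude $h^1(\End(E)) = -\chi(\End(E)) + 1 + \varepsilon(a)$, which rearranges to the stated $4\binom{a+3}{3}-a-1+\varepsilon(a)$. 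Step (iii) is where essentially all the work lies; everything else is bookkeeping with the exact sequences already available in the excerpt.
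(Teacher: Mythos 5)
Your overall d\'evissage could in principle be organized to give the answer, but step (iii) of your plan contains a concrete error that propagates into the final formula: it is not true that $h^2(\End(E))=\varepsilon(a)$. Writing $\End(E)\simeq S^2E\oplus\op3$ and using the monad $0\to\tilde{E}(-a)\to S^2\tilde{E}\oplus\op3\to\tilde{E}(a)\to0$ whose middle cohomology is $S^2E$ (this is the symmetric part of the total complex of $M^\bullet\otimes M^\bullet$, which is how the paper proceeds --- rather more economical than a full $E\otimes E$ d\'evissage), one finds $h^2(\End(E))=h^2(S^2E)=h^3(\tilde{E}(-a))=h^0(\tilde{E}(a-4))$. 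This vanishes for $a=2,3$ but equals $2$ for $a=4$ and grows with $a$; so the value $\varepsilon(a)$ you assign to it is wrong both at $a=3$ (where the true value is $0$, not $1$) and for every $a\ge4$. Consequently the output of your scheme, $h^1(\End(E))=1+\varepsilon(a)-\chi(\End(E))$ with $\chi(\End(E))=4-8(a^2+1)$, evaluates to $78$ at $a=3$ and $133$ at $a=4$, instead of the correct $77$ and $135$.

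The term $\varepsilon(a)$ in fact enters inside the computation of $h^1$ itself, not through $h^2$: breaking the monad for $S^2E$ into $0\to K\to S^2\tilde{E}\oplus\op3\to\tilde{E}(a)\to0$ and $0\to\tilde{E}(-a)\to K\to S^2E\to0$, the second sequence gives $h^1(S^2E)=h^1(K)+h^2(\tilde{E}(-a))$ (using $h^1(\tilde{E}(-a))=h^2(K)=0$), and $h^2(\tilde{E}(-a))=h^1(\tilde{E}(a-4))=h^1(N(a-4))$, which is nonzero (equal to $1$) precisely when $a-4=-1$, i.e.\ $a=3$. So your instinct that the anomaly comes from the twist $a-4$ hitting $-1$ and meeting a degree $-1$ cohomology jump is right in spirit, but you attached it to the wrong cohomology group; as written, steps (iii) and (iv) would both have to be redone, and the nonvanishing $h^2(\End(E))=h^0(\tilde{E}(a-4))$ for $a\ge4$ must then be fed back into the Euler-characteristic bookkeeping.
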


\begin{proof}
Since $\mathcal{E}$ is a self dual rank 2 bundle, we have 
$\lend(\mathcal{E})\simeq S^2\mathcal{E}\oplus\Lambda^2
\mathcal{E} = S^2\mathcal{E}\oplus\op3$, thus 
$h^{1}(\lend(\mathcal{E}))=h^1(S^2\mathcal{E})$. We will 
compute the latter.

By the definition of $\calg(a,1)_0$ (see Proposition 
\ref{descrn of G(a,1)}.(ii), \eqref{def M(a)} and 
\eqref{H0=H0}), $\cale$ is the cohomology of a complex 
$M^{\bullet}$ with terms $M^{-1}=\op3(-a),\ M^0=E\simeq\op3
^{\oplus2}\oplus N,\ M^1=\op3(a)$. Proceed to the double 
complex $M^{\bullet}\otimes M^{\bullet}$ and to its total 
complex $T^{\bullet}$. The symmetric part of  $T^{\bullet}$ is 
the monad $0\to E(-a)\to S^2E\oplus\op3\to E(a)\to 0$, whose 
cohomology sheaf is isomorphic to $S^2\cale$. Therefore this 
monad can be broken into two short exact sequences
$$
0\to K\to S^2E\oplus\op3\to E(a)\to 0 \ \ \ \ \ 
{\rm and}\ \ \ \ \ 0 \to E(-a)\to K\to S^2\cale\to 0. 
$$
Since $h^0(E(-a))=h^0(S^2\cale)=0$, it follows that $h^0(K)=0$; in addition, $h^1(E(a))=h^2(S^2E\oplus\op3)=0$ (use 
Proposition \ref{hi S2E}) implies that $h^2(K)=0$. It then 
follows in view of the splitting $E\simeq\op3^{\oplus2}\oplus 
N$ that 
\begin{equation}\label{h1s2e}
h^1(S^2\mathcal{E})=h^1(K)+h^2(E(-a))=h^1(K)+\varepsilon(a),
\ \ \ \ \ \ \ \ \varepsilon(a):=h^2(N(-a)),
\end{equation}
since $h^1(E(-a))=0$ for $a\ge2$.

To complete our calculation, consider the exact sequence
$$ 
0\to H^0(S^2E\oplus\op3) \to H^0(E(a)) \to H^1(K) \to 
H^1(S^2E\oplus\op3) \to 0. 
$$
Since $h^0(S^2E\oplus\op3)= 4$ and $h^1(S^2E\oplus\op3)= 5$ by 
Proposition \ref{hi S2E}, we conclude that
$$ 
h^1(K) = h^0(E(a)) + 1 = h^0(N(a)) +  V_{2}\otimes 
h^0(\op3(a)) + 1, 
$$
which, together with the equality in equation (\ref{h1s2e}), 
yields the desired formula.
\end{proof}

It is interesting to observe that the right hand side of the 
formula in Lemma \ref{h1end} yields the value of $h^1(\lend(
\cale))$ expected by the deformation theory when $a=2$ and 
$a=3$, respectively 37 and 77; when $a\ge4$, one can check 
that $4\cdot {a+3 \choose 3} - a - 1 > 8(a^2+1)-3$.

Noting that, in view of Theorem \ref{Thm 18}, the dimension of 
$\overline{\mathcal{G}(a,1)}$ equals $h^1(\lend(E))$ for $a=2$ 
and $a\ge4$, as calculated in Lemma \ref{h1end}, and using 
Proposition \ref{descrn of G(a,1)}, we have therefore 
completed the proof of the first main result of this paper. 

\begin{Teo}\label{Thm 20}
For $a=2$ and $a\ge4$, the rank 2 bundles given as cohomology 
of monads of the form 
$$ 
0 \to \op3(-a)\oplus\op3(-1)\to V_{6}\otimes\op3 \to 
\op3(1)\oplus\op3(a) \to 0 $$ fill out a dense subset of a
rational irreducible component of $\mathcal{B}(a^2+1)
$ of dimension
$$ 
4\cdot{{a+3}\choose 3} - a - 1  . 
$$
\end{Teo}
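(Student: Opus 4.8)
The plan is to assemble the structural results obtained above --- Proposition~\ref{structure} (describing $\calp(a,1)$), Theorem~\ref{fibrado} (the bijection $\calg(a,k)\simeq\calp(a,k)$), and Lemma~\ref{h1end} (the formula for $\h^1(\End E)$) --- together with the standard deformation theory of stable sheaves, into the usual recipe for exhibiting a generically nonsingular irreducible component of $\calb(a^2+1)$. Throughout, write $d:=4\binom{a+3}{3}-a-1$.

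First I would recall that every $E\in\calg(a,1)$ is a stable rank $2$ bundle with $c_1=0$ and $c_2=a^2+1$, so that $\calg(a,1)$ is genuinely a subset of $\calb(a^2+1)$. Next I would promote the set-theoretic bijection of Theorem~\ref{fibrado} to a morphism: using the universal null correlation sheaf over $\p5$ and the tautological rank $2$ subbundle on $G(2,\h^0(\op3(a)))$, one carries out over the base $\calp(a,1)$ the relative version of the monad constructions of the previous two sections, producing a flat family of monads of the form \eqref{Monad3} whose cohomology sheaves form a flat family of stable bundles; by the coarse moduli property of $\calb(a^2+1)$ this yields a classifying morphism $\psi\colon\calp(a,1)\to\calb(a^2+1)$. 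By Theorem~\ref{fibrado}, $\psi$ is injective with image exactly $\calg(a,1)$; since an injective morphism has zero-dimensional generic fibre, $\dim\calg(a,1)=\dim\calp(a,1)=d$ (using Proposition~\ref{structure}), and in characteristic zero $\psi$ is moreover birational onto $Z:=\overline{\calg(a,1)}$, which is therefore irreducible of dimension $d$.

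Now fix $E\in\calg(a,1)$. Since $E$ is stable, the Zariski tangent space of $\calb(a^2+1)$ at $[E]$ is $\Ext^1(E,E)=\h^1(\End E)$, and as $a=2$ or $a\ge 4$ we have $\varepsilon(a)=0$, so Lemma~\ref{h1end} gives $\dim T_{[E]}\calb(a^2+1)=\h^1(\End E)=d$. On the other hand $[E]$ lies on the $d$-dimensional irreducible closed subset $Z\subset\calb(a^2+1)$, so $\dim_{[E]}\calb(a^2+1)\ge d$. Sandwiching, $d\le\dim_{[E]}\calb(a^2+1)\le\dim T_{[E]}\calb(a^2+1)=d$, whence $\calb(a^2+1)$ is nonsingular at $[E]$, the unique irreducible component through $[E]$ has dimension $d$, and --- being a $d$-dimensional irreducible closed subset contained in it --- $Z$ equals that component. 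Thus $Z=\overline{\calg(a,1)}$ is an irreducible component of $\calb(a^2+1)$ of dimension $d$, nonsingular along $\calg(a,1)$. Finally, $\psi$ maps $\calp(a,1)$ birationally and quasi-finitely onto $\calg(a,1)$, which is contained in the (open) regular locus of $Z$; by Zariski's Main Theorem $\psi$ is then an open immersion onto its image, so $\calg(a,1)$ is a nonsingular open subset of the component $Z$. This is precisely Main Theorem~\ref{MT1} in the stated form.

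I expect the main obstacle to be in the second paragraph: carefully setting up the relative monad construction over the base $\calp(a,1)$ --- producing an honest flat family, checking that the resulting classifying map $\psi$ is a morphism, that it is injective, and that its image is exactly $\calg(a,1)$ (the last point being essentially the content of Theorem~\ref{fibrado}). Once this family is in place, it is the numerical input of Lemma~\ref{h1end} (valid thanks to $\varepsilon(a)=0$) that makes the tangent-space estimate tight, and the remaining smoothness-and-component argument is entirely routine.
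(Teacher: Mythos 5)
Your proposal is correct and follows essentially the same route as the paper, which simply combines the bijection $\calg(a,1)\simeq\calp(a,1)$ of Theorem~\ref{fibrado}, the dimension count of Proposition~\ref{structure}, and the equality $h^1(\End E)=4\binom{a+3}{3}-a-1$ from Lemma~\ref{h1end} (valid since $\varepsilon(a)=0$) into the standard tangent-space sandwich. You in fact supply more detail than the paper does, notably the relative monad construction giving the classifying morphism $\psi$ and the Zariski Main Theorem step identifying $\calg(a,1)$ with a nonsingular open subset of the component.
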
 

In particular, for the case $a=2$, we conclude that rank 2 
bundles given as cohomology of monads of the form 
(\ref{invariant121}) yield a dense subset of an irreducible 
component of $\mathcal{B}(5)$ with expected dimension 37.

\medskip

\section{Cohomology bundle $\cale$ of the monad of type \eqref{invariant131} and the related reflexive sheaf $\calf$}\label{F related to E}

\medskip

Consider the set
$$ 
\calh=\{ [\cale]\in\calb(5) ~|~ \cale~ 
\text{is cohomology of a monad of type}\ \eqref{invariant131}\}. 
$$
It is known that $\calh\ne\emptyset$ - see \cite[Table 5.3, $c_2=5$, 
Case (2).ii)]{hartshorne1991}. Note that the set $\calh$ is a 
constructible subset of $\calb(5)$, as well as $\calg(2,1)$ (see 
Remark after Proposition \ref{Prop 7}). The aim of this and the 
subsequent sections is to prove
\begin{Teo}\label{Cal H}
The set $\calh$ satisfies the condition
$\dim(\calh\smallsetminus(\calg(2,1)\cap\calh)) \le 36$. Its
closure in $\calb(5)$ does not constitute a component of
$\calb(5)$. 
\end{Teo}
In this section we will relate the vector bundle $[\cale]\in\calh
\setminus(\calg(2,1)\cap\calh)$ to a rank 2 reflexive sheaf $\calf$ 
with Chern classes $c_1(\calf)=0$, $c_2(\calf)=2$ and 
$c_3(\calf)=2k$, $0\le k\le 6$, 
which appears as a middle cohomology of a left-exact complex 
$K^{\bullet}$ (see \eqref{complex K}) induced by the monad of type 
\eqref{invariant131} defining $\cale$. This relation will be 
established in Proposition \ref{two steps}. We will then use it in 
Section \ref{properties of F} to prove Theorem \ref{Cal H}.

\vspace{2mm}
Let $[\cale]\in\calh\setminus(\calg(2,1)\cap\calh)$ be the cohomology 
bundle of the monad of the form \eqref{invariant131}:
\begin{equation}\label{Monad M}
\begin{split}
& M^{\bullet}:\ 0\to M^{-1}\xrightarrow{\alpha}M^0\xrightarrow{\beta}
M^{1}\to0,\\
& M^{-1}=\op3(-2)\oplus V_{2}\otimes\op3(-1),\ 
M^0=\op3(-1)\oplus V_{6}\otimes\op3\oplus\op3(1),\\ 
& M^{1}=V'_{2}\otimes\op3(1)\oplus\op3(2).
\end{split}
\end{equation}
Since the bundle $V_{2}\otimes\op3(-1)$ is a uniquely 
defined subbundle of the bundle $M^{-1}$ (respectively, 
$V'_{2}\otimes\op3(1)$ is a uniquely defined quotient bundle of 
$M^{1}$), we obtain a commutative diagram in which
$\alpha_0$ and $\beta_0$ are the induced morphisms:
\begin{equation}\label{i1}
\xymatrix{
V_2\otimes\op3(-1)\ar@{>->}[d]\ar@{>->}[dr]^-{\alpha_0} & & 
\op3(2)
\ar@{>->}[d] \\  
M^{-1}\ar@{>>}[d]\ar@{>->}[r]^-{\alpha} & 
M^0\ar@{>>}[r]^-{\beta}
\ar@{>>}[dr]^-{\beta_0} & 
M^{1}\ar@{>>}[d] \\
\op3(-2) & & V'_2\otimes\op3(1).} 
\end{equation}
Here the induced monad
\begin{equation}\label{i=2 new}
0\to V_{2}\otimes\op3(-1)\xrightarrow{\alpha_0}M^0\xrightarrow{\beta_0}
V'_{2}\otimes\op3(1)\to0
\end{equation}
has the rank 4 cohomology bundle 
\begin{equation}\label{E=coho bdl}
E=\frac{\ker\beta_0}{\mathrm{im}~\alpha_0}.
\end{equation}
Mimicking now the argument with diagram \eqref{i}, we obtain 
that there exist a subbunle morphism $\sigma:\op3(-2)\to E$
and an epimorphism $\tau:E\to\op3(2)$ which yield the monad
the the cohomology bundle $\cale$:
\begin{equation}\label{monad4}
0\to\op3(-2)\overset{\sigma}{\to}E\overset{\tau}{\to}\op3(2)
\to0,\ \ \ \ \ \ \cale=\ker\tau/\mathrm{im}~\sigma.
\end{equation}
Since there is a uniquely defined (up to a scalar multiple) quotient 
morphism $M^0\twoheadrightarrow \op3(-1)$, we have a well-defined 
morphism
\begin{equation}\label{tilde alpha}
\tilde{\alpha}:\ V_2\otimes\op3(-1)\overset{\alpha_0}
{\hookrightarrow}M^0\twoheadrightarrow \op3(-1)
\end{equation}
and, dually, a well-defined morphism
\begin{equation}\label{tilde beta}
\tilde{\beta}:\ \op3(1){\hookrightarrow}M^0\overset{\beta_0}
{\twoheadrightarrow}V'_2\otimes\op3(1).
\end{equation}
Assume that both $\tilde{\alpha}$ and $\tilde{\beta}$ 
are nonzero morphisms. Then a standard diagram chasing shows
that, in the monad \eqref{i=2 new}, one can split out a direct
summand $\op3(-1)$ from $V_{2}\otimes\op3(-1)$ and $M^0$, 
respectively, split out a direct summand $\op3(1)$ from $M^0$ 
and $V'_{2}\otimes\op3(1)$, without changing its cohomology 
bundle $E$. Thus, the monad \eqref{i=2 new}
reduces to a monad
\begin{equation}\label{i=2 new split}
0\to\op3(-1)\overset{\alpha'}{\to}V_{6}\otimes\op3
\overset{\beta'}{\to}\op3(1)\to0,\ \ \ \ \ \ 
E=\frac{\ker\beta'}{\mathrm{im}~\alpha'}.
\end{equation}
Now by the remark after 
Lemma \ref{directsum}, $E$ is a rank 4 instanton bundle, so 
that, by \eqref{monad4} and Lemma \ref{equivalence2}, $\cale$
is the cohomology bundle of the monad \eqref{Monad2} for $a=2
$ and $k=1$. This means that $\cale\in\calg(2,1)\cap\calh$,
contrary to the assumption on $\cale$.  

We thus may assume that either (a) $\tilde{\alpha}=0,\ 
\tilde{\beta}\ne0$, or (b) $\tilde{\alpha}=\tilde{\beta}=0$. 
(We omit the case $\tilde{\alpha}\ne0,\ \tilde{\beta}=0$, 
since it is completely similar to the case (a).)

\vspace{3mm}
(a) Case $\tilde{\alpha}=0,\ \tilde{\beta}\ne0$. We are going
to show that this case is impossible. 

First, note that, since $\tilde{\beta}\ne0$, we may as above 
split out a direct summand $\op3(1)$ from the middle term and 
the righthand term of the monad \eqref{i=2 new}, without 
changing its cohomology bundle $E$. Thus, this monad reduces 
to a monad
\begin{equation}\label{i=2 new split}
0\to V_2\otimes\op3(-1)\overset{\alpha'}{\to}\op3(-1)\oplus 
V_6\otimes\op3\overset{\beta'}{\to}\op3(1)\to0,\ \ \ \ \ \ 
E=\frac{\ker\beta'}{\mathrm{im}~\alpha'}.
\end{equation}
Next, the condition $\tilde{\alpha}=0$ means that the 
subbundle morphism $\alpha'$ in \eqref{i=2 new} factors 
through a subbundle morphism $\alpha''$ in the commutative
diagram
\begin{equation}\label{diag (a)}
\xymatrix{
& V_2\otimes\op3(-1)\ar@{>->}[r]^-{\alpha''}\ar@{=}[d]& 
V_6\otimes\op3 \ar@{>>}[r]\ar@{>->}[d] & F_4\ar@{>->}[d]^
-{\lambda} & \\
& V_2\otimes\op3(-1)\ar@{>->}[r]^-{\alpha'} &\op3(-1)\oplus 
V_6\otimes\op3\ar@{>>}[r]\ar@{>>}[d] &F_5\ar@{>>}[d]^-{\mu}& \\
& & \op3(-1)\ar@{=}[r] & \op3(-1), & } 
\end{equation}
where $F_4:=\mathrm{coker}~\alpha''$ and $F_5:=\mathrm{coker}
~\alpha'$ are vector bundles of rank 4 and 5, respectively.
From this diagram it follows immediately that $\op3(-1)$ 
splits out as a direct summand of $F_5$:
\begin{equation}\label{F5 direct sum}
F_5\cong \op3(-1)\oplus F_4.
\end{equation}
The monad \eqref{i=2 new split} and the diagram 
\eqref{diag (a)} yield a commutative diagram
\begin{equation}\label{diag (a1)}
\xymatrix{
& F_3\ar@{>->}[r]\ar@{>->}[d]& 
F_4\ar@{>>}[r]^-{\eta\circ\lambda}\ar@{>->}[d]^-{\lambda} & 
A\ar@{>->}[d]& \\
& E\ar@{>->}[r]^-{\nu}\ar@{>>}[d]^-{\mu\circ\nu} & F_5 
\ar@{>>}[r]^-{\eta}\ar@{>>}[d]^-{\mu} & 
\op3(1)\ar@{>>}[d]& \\
 & B\ar@{>->}[r] & 
\op3(-1)\ar@{>>}[r]^-{\bar{\eta}} & C, &} 
\end{equation}
where $F_3:=\ker(\eta\circ\lambda),\ A:=F_4/F_3,\ B:=E/F_3,\ 
C:=\op3(1)/A$. Here $A\ne0$, since otherwise $C\simeq\op3(1)
$, and then $\bar{\eta}$ is not surjective, contrary to
\eqref{diag (a1)}. Hence, $C$ is a torsion sheaf, and $A$, $B$
and $F_3$ are torsion free sheaves of rank 1, 1 and 4,
respectively. Therefore, the diagram \eqref{diag (a1)} 
implies that $c_1(F_4)-c_1(E)=2c_1(\op3(1))$. On the other 
hand, in view of \eqref{F5 direct sum} we have a well-defined
injective morphism $\rho:\ E\xrightarrow{\nu}F_5\xrightarrow{
pr_2}F_4$ such that, by the Snake Lemma, $Q:=\mathrm{coker}
\rho\cong A/B$ is a torsion sheaf. In addition, by the above 
equality, $c_1(Q)=2c_1(\op3(1))\ne0$, i. e., $Q\ne0$. However,
\eqref{diag (a1)} and the Snake Lemma yield a commutative 
diagram
\begin{equation}\label{diag (a2)}
\xymatrix{
& E\ar@{>->}[r]^-{\rho}\ar@{=}[d] & F_4\ar@{>>}[r] 
\ar@{>->}[d]^-{i} & Q\ar@{>->}[d]^-{\bar{i}} & \\
& E\ar@{>->}[r]^-{\nu} &\op3(-1)\oplus F_4\ar@{>>}[r]^-{\eta}
\ar@{>>}[d] & \op3(1)\ar@{>>}[d] & \\
& & \op3(-1)\ar@{=}[r] & \op3(1)/\bar{i}(Q), & } 
\end{equation}
where $i$ is the inclusion of the direct summand and $\bar{i}$
is the induced morphism. But the torsion sheaf $Q$ is not a 
subsheaf of $\op3(1)$, and we obtain a contradiction, as 
claimed. 

Summarizing the above arguments, we see that the bundle 
$[\cale]\in\calh$ is the cohomology $\calh^0(M^{\bullet})$ of a 
monad $M^{\bullet}$ of the form \eqref{invariant131} satisfying 
the condition (a): $(\tilde{\alpha},\tilde{\beta})\ne(0,0)$, then 
$M^{\bullet}$ is reducible to a monad of the form 
\eqref{invariant121}, i. e. $[\cale]\in\calh\cap\calg(2,1)$. 
Thus, denoting
\begin{equation}\label{def H0}
\calh_0:=\{[\cale]\in\calh\ |\ \cale=\calh^0(M^{\bullet
}), \textrm{where}\ M^{\bullet}\ \textrm{satisfies the condition (b):
}\ \tilde{\alpha}=\tilde{\beta}=0\},
\end{equation}
we obtain
\begin{equation}\label{in H0}
\calh\smallsetminus(\calh\cap\calg(2,1))\subset\calh_0.
\end{equation}
We thus proceed to the study of the case $\tilde{\alpha}=\tilde{\beta}
=0$.

\vspace{3mm}
(b) Case $\tilde{\alpha}=\tilde{\beta}=0$.

First, consider the commutative diagram
\begin{equation}\label{diag (b1)}
\xymatrix{
& \op3(1)\ar@{=}[r]\ar@{>->}[d]^-{j_1}& \op3(1)\ar@{>->}[d]^-{j_0} 
& & \\
& V_6\otimes\op3\oplus\op3(1)\ar@{>->}[r]^-{i_0}\ar@{>>}[d]
^-{h_1} & M^0\ar@{>>}[r]^-{g_0}\ar@{>>}[d]^-{h_0} & \op3(1)\ar@{=}[d] 
& \\
& V_6\otimes\op3\ar@{>->}[r]^-i & V_6\otimes\op3
\oplus\op3(-1)\ar@{>>}[r]^-{g} & \op3(1)&} 
\end{equation}
and the exact triples following from \eqref{i=2 new} and
\eqref{E=coho bdl}
\begin{equation}\label{tripl 1}
0\to V_2\otimes\op3(-1)\xrightarrow{\alpha_0}M^0\xrightarrow{c_0}C_0
\to0,
\end{equation}
\begin{equation}\label{tripl 2}
0\to E\xrightarrow{d_0}C_0\xrightarrow{e_0}V'_2\otimes\op3(1)\to0,\ \ 
\ \ \ \ C_0:=\mathrm{coker}~\alpha_0,\ \ \ \ \ \ \beta_0=e_0\circ c_0.
\end{equation}
The condition $\tilde{\alpha}=0$ implies that there exists a 
subbundle morphism $0\to V_2\otimes\op3(-1)\xrightarrow{
\alpha_1}V_6\otimes\op3\oplus\op3(1)$ such that
\begin{equation}\label{j0}
\alpha_0=i_0\circ\alpha_1.
\end{equation}
Setting $C:=\mathrm{coker}(h_0\circ\alpha_0),$ $C_1:=\mathrm{
coker}~\alpha_1$, $\alpha_2:=h_1\circ\alpha_1$, $C_2:=\mathrm{
coker}~\alpha_2$, we obtain from 
\eqref{diag (b1)}-\eqref{tripl 1} and \eqref{j0} an induced 
commutative diagram
\begin{equation}\label{diag (b2)}
\xymatrix{
\op3(1)\ar@{=}[r]\ar@{>->}[d]^-{\bar{j}_1}& \op3(1)\ar@{>->}[d]
^-{\bar{j}_0} & \\
C_1\ar@{>->}[r]^-{\bar{i}_0}\ar@{>>}[d]^-{\bar{h}_1} & C_0\ar@{>>}[r]
^-{\bar{g}_0}\ar@{>>}[d]^-{\bar{h}_0} & \op3(-1)\ar@{=}[d]\\
C_2\ar@{>->}[r]^-{\bar{i}} & C\ar@{>>}[r]^-{\bar{g}} & \op3(-1)} 
\end{equation}
and an exact triple
\begin{equation}\label{tripl 3}
0\to V_2\otimes\op3(-1)\xrightarrow{\alpha_2}V_6\otimes\op3
\xrightarrow{c_2}C_2\to0.
\end{equation}
From the condition $\tilde{\beta}=0$ and diagram chasing it 
follows that there exists an injective morphism $j:\ \op3
(1)\to E$ such that $\bar{j}_0=d_0\circ j$. From this relation and 
\eqref{tripl 2}, \eqref{diag (b2)} and \eqref{tripl 3} by diagram 
chasing we obtain the folowing data:\\ 
1) an exact triple
\begin{equation}\label{triple E3}
0\to\op3(1)\xrightarrow{j}E\xrightarrow{h}E_3\to0,\ \ \ \ \ \ 
E_3:=\mathrm{coker}j,
\end{equation}
2) a commutative diagram
\begin{equation}\label{diag (b3)}
\xymatrix{
& \calf\ar@{>->}[r]\ar@{>->}[d]& E_3\ar@{>>}[r]^-{\varepsilon}
\ar@{>->}[d]^-{d} & \op3(-1)\ar@{=}[d] & \\
& C_2\ar@{>->}[r]^-{\bar{i}}\ar@{>>}[d]^-{\bar{e}} & C 
\ar@{>>}[r]^-{\bar{g}}\ar@{>>}[d]^-{e} & \op3(-1) & \\
& V'_2\otimes\op3(1)\ar@{=}[r] & V'_2\otimes\op3(1), & &} 
\end{equation}
where $d$ and $e$ are the induced morphisms, $\varepsilon:=
d\circ\bar{g}$, $\bar{e}:=e\circ\bar{i}$,\\
3) a sheaf
\begin{equation}\label{E2}
\calf:=\ker~\varepsilon,
\end{equation}
and a left-exact complex
\begin{equation}\label{complex K}
\begin{split}
& K^{\bullet}:\ 0\to K^{-1}\xrightarrow{\alpha_2}K^0\xrightarrow{
\beta_2}K^{1}\to0,\ \ \ \beta_2:=\bar{e}\circ c_2,\\
& K^{-1}=V_2\otimes\op3(-1),\ \ K^0=V_6\otimes\op3,\ \ 
K^{1}=V'_2\otimes\op3(1),
\end{split}
\end{equation}
such that
\begin{equation}\label{H^i(K)}
\calh^0(K^{\bullet})=\calf,\ \ \ \ \ \ \ \calh^1(K^{\bullet})=
\mathrm{coker}~\varepsilon.
\end{equation}

From \eqref{monad4}, \eqref{complex K} and the vanishing of
$\Hom(\op3(1),\op3(-2))$ follows the commutative diagram
\begin{equation}\label{diag c1}
\xymatrix{
&\op3(1)\ar@{>->}[d]^j\ar@{=}[r]&\op3(1)\ar@{>->}[d]^-{j'} &\\
\op3(-2) \ar@{>->}[r]^-{\sigma} \ar@{=}[d]& E\ar@{>>}[r]  
\ar@{>>}[d]^-h & \mathrm{coker}~\sigma\ar@{>>}[d] & \\
\op3(-2)\ar@{>->}[r]^-{h\circ\sigma} & E_3 \ar@{>>}[r] & \call &\\} 
\end{equation}
where $\call:=\mathrm{coker}(h\circ\sigma)$ and $j'$ is an induced 
morphism which is nonzero, hence injective, since 
$\mathrm{coker}~\sigma$ is locally free by the exact triple
$0\to\cale\to\mathrm{coker}~\sigma\to\op3(2)\to0$ following 
from \eqref{monad4}. Since $\cale$ is stable by assumption, 
so that $h^0(\cale(-1))=0$ (see \cite{okonek}), the last 
triple and \eqref{diag c1} yield a commutative diagram
\begin{equation}\label{diag c2}
\xymatrix{
& & \op3(1)\ar@{>->}[d]^{j'} \ar@{=}[r] & 
\op3(1)\ar@{>->}[d]^-{\bar{\beta}} & \\
& \cale\ar@{>->}[r]\ar@{=}[d] & \mathrm{coker}~\sigma
\ar@{>>}[r] \ar@{>>}[d] & \op3(2)\ar@{>>}[d] & \\
&\cale\ar@{>->}[r]& \call\ar@{>>}[r]&\calo_{\mathbb{P}^2}(2),&} 
\end{equation}
where $\p2=\mathrm{Supp}(\coker\bar{\beta})$ is a projective 
plane in $\p3$. Note that, in this diagram, $\bar{\beta}$ is the
composition $\bar{\beta}:\op3(1)\xrightarrow{\mathrm{can}}M^0
\xrightarrow{\beta}M^1$, and $\mathrm{im}~\bar{\beta}\hookrightarrow
\op3(2)$ since $\tilde{\beta}=0$. Thus, $\p2$ is uniquely defined by
the morphism $\beta$ in the monad $M^{\bullet}$. In a similar way,
since $\tilde{\alpha}=0$, the morphism $\alpha$ in $M^{\bullet}$ 
uniquely defines a morphism $\bar{\alpha}:\op3(-2)\to\op3(-1)$, hence
a projective plane $\mathbb{P}^2_0=\mathrm{Supp}(\coker\bar{\alpha})$. 
For these two planes we will use the notation
\begin{equation}\label{notation for P2}
\p2=\p2(M^{\bullet},\beta),\ \mathbb{P}^2_0=\p2(M^{\bullet},\alpha).
\end{equation}

Consider the lower horizontal triple in \eqref{diag c2}:
\begin{equation}\label{cal Q}
0\to\cale\xrightarrow{\theta}\call\xrightarrow{\gamma}\calo_{
\mathbb{P}^2}(2)\to0,\ \ \ \ \ \ \ \ \p2=\p2(M^{\bullet},\beta).
\end{equation}
\noindent
\begin{Lema}\label{sheaf Q}
The sheaf $\call$ in \eqref{cal Q} is a stable reflexive rank 2 
sheaf on $\p3$, $[\call]\in\calr(1,4,6)$.
\end{Lema}
\begin{proof}
First, show that the triple \eqref{cal Q} doesn't split. Indeed,
otherwise, the lower horizontal triple in \eqref{diag c1} extends to 
a commutative push-out diagram
\begin{equation}\label{diag c3}
\xymatrix{
\op3(-2)\ar@{>->}[r]\ar@{=}[d] & E_3\ar@{>>}[r] & \cale\oplus 
\calo_{\p2}(2)\\
\op3(-2)\ar@{>->}[r] & \op3(-2)\oplus\calo_{\p2}(2) 
\ar@{>>}[r] \ar[u] & \calo_{\mathbb{P}^2}(2),\ar[u]} 
\end{equation}
where the lower triple splits since $\Ext^1(\calo_{\mathbb{P}^2}(2),
\op3(-2))=0$. This yields a nonzero morphism $\delta:\ 
\calo_{\mathbb{P}^2}(2)\to E_3$ which being composed with the 
morphism $\varepsilon$ in \eqref{E2} is the zero morphism 
$\op2(2)\to\op3(-1)$. Hence, in \eqref{E2}, $\delta$ factors through 
a nonzero morphism
\begin{equation}\label{delta'}
\delta':\ \calo_{\mathbb{P}^2}(2)\to \calf.
\end{equation}
On the other hand, \eqref{complex K} and \eqref{H^i(K)} yield an exact
triple $0\to\op3(-2)\to\ker~\beta_2\to \calf\to0$ which, together with
\eqref{delta'}, extends to a push-out diagram similar to \eqref{diag 
c3}:
\begin{equation}\label{diag c4}
\xymatrix{
\op3(-2)\ar@{>->}[r]\ar@{=}[d]&\ker~\beta_2\ar@{>>}[r]& \calf\\
\op3(-2)\ar@{>->}[r] & \op3(-2)\oplus\calo_{\mathbb{P}^2}(2) \ar@{>>}
[r]\ar[u]^-{\delta''}& \calo_{\mathbb{P}^2}(2),\ar[u]^-{\delta'}} 
\end{equation}
where $\delta''|_{\calo_{\mathbb{P}^2}(2)}$ is nonzero. However, this
is impossible since $\ker~\beta_2$ by definition is torsion free as a 
subsheaf of a locally free sheaf $V_6\otimes\op3$. 

Next, since $\cale\cong\cale^{\vee}$ is locally free and $\cale 
xt^1(\calo_{\mathbb{P}^2}(2),\op3)=\calo_{\mathbb{P}^2}(-1)$, then, 
applying the functor $\cale xt^{\bullet}(-,\op3)$ to the triple 
\eqref{cal Q} we have an exact sequence
\begin{equation}\label{dual Q}
0\to\call^{\vee}\xrightarrow{\theta^{\vee}}\cale\to\calo_{\mathbb{P}^2
}(-1)\xrightarrow{\varphi}{}^1\call\to0,\ \ \ \ \ \ \ \ \ \ \ \ \ 
{}^1\call:=\cale xt^1(\call,\op3).
\end{equation}
Let $d=\dim({}^1\call)$. Consider the three possible cases: (a) 
$d=2$, (b) $d=1$, and (c) $d=0$. We will show that the cases (a) and 
(b) lead to a contradition.\\
(a) $d=2$. In this case $\dim\mathrm{Sing}~\call=2$, i. e. the torsion 
subsheaf $\mathcal{T}ors(\call)$ of $\call$ has dimension 2. This 
necessarily implies that the composition $\mathcal{T}ors(\call)
\hookrightarrow\call\overset{\gamma}{\twoheadrightarrow}\calo_{
\mathbb{P}^2}(2)$ is an isomorphism giving the splitting of the 
triple \eqref{cal Q}, contrary to the above.\\
(b) $d=1$. In this case ${}^1\call=\calo_Z(-1)$ for $Z$ a subscheme 
of $\mathbb{P}^2$, of dimension $\dim Z=1$. Hence $\ker\varphi
\hookrightarrow\calo_{\mathbb{P}^2}(-1-k)$ for some $k\ge2$. By 
\eqref{dual Q} the sheaf $\ker\varphi$ is the quotient of $\cale$, 
it follows that $h^0(\cale_{\p2}(-1-k))\ne0,\ k\ge1$, and 
so  $h^0(\cale_{\p2}(-2))\ne0$. On the other hand, 
since $\cale $ is the cohomology of (\ref{Monad M}), by 
\cite[Table 5.3, case 5(2.ii)]{hartshorne1991} it  has the 
spectrum $\mathrm{Sp}(\cale)=(-1,0,0,0,1)$, and then it follows 
that 
\begin{equation}\label{h1=0,1}
h^1(\cale(-3))=0,\ \ \ h^1(\cale(-2))=1.
\end{equation}
Thus, by the first equality in \eqref{h1=0,1}, the inequality 
$h^0(\cale_{\p2}(-2))\ne0$ contradicts to the cohomology 
sequence of the exact triple 
\begin{equation}\label{restr P2}
0\to\cale(-3)\to\cale(-2)\to\cale_{\p2}(-2)\to0
\end{equation}
as $h^0(\cale(-2))=0$ by the stability of $E$. Note also that we 
have proved here the equality
\begin{equation}\label{h0(E(-2)|P2) vanishes)}
H^0(\cale_{\p2}(-2))=0,\ \ \ \ \ \ \ \ \ \ \ \forall\ 
\p2\subset\p3.
\end{equation}
(c) $d=1$. In this case ${}^1\call=\calo_Z(-1)$ for $Z$ a 
subscheme of $\mathbb{P}^2$ of dimension $\dim Z=0$, and the 
sequence $0\to\call^{\vee}\xrightarrow{\theta^{\vee}}\cale\to
\cali_{Z,\p2}(-1)\to0$, and $Z=(s)_0$, $0\ne s\in H^0(\cale(-1)|
_{\p2})$. Since $\dim Z=0$, it follows that $\lext^1(\cali_{Z,\p2}
(-1),\op3)=\lext^1(\op2(-1),\op3)=\op2(2)$. Thus, applying the 
functor $\lext^{\bullet}(-,\op3)$ to the last triple, since  
$\cale\cong\cale^{\vee}$ we obtain an exact triple
$0\to\cale\xrightarrow{\theta^{\vee\vee}}\call^{\vee\vee}
\xrightarrow{\gamma}\calo_{\mathbb{P}^2}(2)\to0$. Comparing this 
triple with \eqref{cal Q} and taking into account that, by 
construction, the composition $\cale\xrightarrow{\theta}\call
\xrightarrow{\mathrm{can}}\call^{\vee\vee}$ coincides with
$\theta^{\vee\vee}$ we obtain that $\call\xrightarrow{\mathrm{can
}}\call^{\vee\vee}$ is an isomorphism, i. e. $\call$ is reflexive. 

Next, as $c_t(\cale)=1+5t^2$, formulas for Chern classes of 
$\call$ follow from \eqref{cal Q}. In particular, $\call^{\vee}
\cong\call(-1)$ has $c_1(\call(-1))=-1$, and since $h^0(\cale)=0
$, it follows that 
\begin{equation}\label{h0(calg(-1))=0}
h^0(\call(-1))=0.
\end{equation} 
Thus, $\call$ is stable by \cite[Lemma 3.1]
{Hartreflexive}. Lemma is proved.
\end{proof}

We now proceed to the more close study of the sheaf $\calf$. Consider 
the upper horizontal triple of the diagram \eqref{diag (b3)} which 
extends to an exact sequence:
\begin{equation}\label{exact E2}
0\to \calf\to 
E_3\xrightarrow{\varepsilon}\op3(-1)\to\calo_{\bar{Y}}(-1)\to0,
\ \ \ \ \ \ \ \ \ \ \bar{Y}\subset\p3.
\end{equation}
\begin{Lema}\label{sheaf E2}
The sheaf $\calf$ defined in \eqref{E2} is a reflexive rank 2 sheaf on 
$\p3$ fitting in an exact triple
\begin{equation}\label{E2 and Q}
0\to \calf\xrightarrow{\zeta}\call\to\cali_{\bar{Y},\mathbb{P}^2_0}(-1)
\to0,
\end{equation}
and in its dual
\begin{equation}\label{E2 and Q dual}
0\to \call(-1)\to\calf\xrightarrow{\rho}\cali_{\bar{Z},\mathbb{P}^2_0}
(2)\to0,
\end{equation}
where $\mathbb{P}^2_0=\p2(M^{\bullet},\alpha)$, $\bar{Y},\ \bar{Z}
\subset\mathbb{P}^2_0$, $\dim\bar{Y}\le0$, $\dim\bar{Z}\le0$, and 
\begin{equation}\label{lY+lZ=6}
   \ell(\bar{Y})+\ell(\bar{Z})=6. 
\end{equation}
Chern classes of $\calf$ are $c_1(\calf)=0$, $c_2(\calf)=2$, 
$0\le c_3(\calf)=2\ell(\bar{Y})\le12$.  
\end{Lema}
\begin{proof}
We first show that $\rk \calf=2$. Indeed, if $\varepsilon$
in \eqref{exact E2} is the zero morphism, then the diagram 
\eqref{diag (b3)} and the Snake Lemma yield an epimorphism 
$V'_2\otimes\op3(1)\twoheadrightarrow\op3(-1)$ which is impossible.
Hence, $\varepsilon\ne0$ and \eqref{E2} implies that $\rk \calf=2$ and,
moreover, that $\bar{Y}\subsetneqq\p3$, i. e., $\bar{Y}$ is a proper 
subscheme of $\p3$. Note also that, by \eqref{i=2 new} and 
\eqref{E=coho bdl}, $c_1(E)=0$, hence $c_1(E_3)=-1$ in view of 
\eqref{triple E3}. Thus, \eqref{exact E2} implies that 
$c_1(\calf)=c_1(\calo_{\bar{Y}}(-1))\ge0$.

Next, consider the lower exact triple in \eqref{diag c1}:
\begin{equation}\label{E2,Q}
0\to\op3(-2)\xrightarrow{h\circ\sigma}E_3\to\call\to0.
\end{equation}
If the composition $f:=\varepsilon\circ h\circ\sigma$ is zero, then
\eqref{exact E2} and \eqref{E2,Q} imply that there exist injective 
morphisms $\op3(-2)\overset{f_1}\rightarrowtail \calf$ and 
$\mathrm{coker}(f_1)\overset{f_2}\rightarrowtail\call$. Since $\rk 
\calf=2$, $c_1(\calf)\ge0$ and $\call$ is reflexive by Lemma 
\ref{sheaf Q}, it follows that $\mathrm{coker}(f_1)$ is a rank 1 
torsion free sheaf with $c_1(\mathrm{coker}(f_1))\ge2$. Thus, the
injectivity of $f_2$ shows that $h^0(\call(-2))\ne0$, contrary to the
stability of $\call$ (see Lemma \ref{sheaf Q}). It follows that 
$f\ne0$, so that \eqref{exact E2} and \eqref{E2,Q} extend to a 
comutative diagram
\begin{equation}\label{diag c5}
\xymatrix{
& \op3(-2)\ar@{=}[r]\ar@{>->}[d]^-{h\circ\sigma}& \op3(-2)\ar@{>->}[d]
^-{f} & & \\
\calf\ar@{=}[d]\ar@{>->}[r]&E_3\ar[r]^-{\varepsilon}\ar@{>>}[d]&
\op3(-1)\ar@{>>}[r]^-{\delta}\ar@{>>}[d] & \calo_{\bar{Y}}(-1)
\ar@{=}[d]\\
\calf\ar@{>->}[r] & \call\ar[r]^-{\bar{\varepsilon}} & 
\calo_{\mathbb{P}^2_0}(-1)\ar@{>>}[r]^-{\bar{\delta}}
& \calo_{\bar{Y}}(-1),}
\end{equation}
where $\mathbb{P}^2_0$ is some projective plane in $\p3$. If 
$\bar{\delta}$ is an isomorphism, then $\mathrm{coker}(\varepsilon)
\simeq\calo_{\mathbb{P}^2_0}(-1)$, so that the diagram 
\eqref{diag (b3)} and the Snake Lemma yield an epimorphism 
$V'_2\otimes\op3(1)\twoheadrightarrow\calo_{\mathbb{P}^2_0}(-1)$
which is impossible. 
Hence, $\bar{Y}\subsetneqq\mathbb{P}^2_0$, i. e., $\bar{Y}$ is a 
proper subscheme of $\mathbb{P}^2_0$, $\dim\bar{Y}\le1$, and 
\eqref{diag c5} yields an exact triple \eqref{E2 and Q}.

Show that the case $\dim\bar{Y}=1$ is impossible. Indeed, in this case 
$\bar{Y}$ contains a divisor $D\subset\mathbb{P}^2_0$ of degree 
$k\ge1$ as a subscheme, and this yields an epimorphim $\calo_{\bar{Y}}
(-1)\overset{b}{\twoheadrightarrow}\calo_D(-1)$. On the other hand, 
the middle horizontal exact sequence in \eqref{diag c5}, together with 
diagram \eqref{diag (b3)} and the Snake Lemma, yield an epimorphism 
$V'_2\otimes\op3(1)\twoheadrightarrow\calo_{\bar{Y}}(-1)$. This 
epimorphism composed with the above epimorphism $b$ gives an 
epimorphism $V'_2\otimes\op3(1)\twoheadrightarrow\calo_D(-1)$ which is 
impossible, since $h^0(\calo_D(-2))=0$, as follows from the 
cohomology of the exact triple $0\to\calo_{\mathbb{P}^2_0}(-2-k)\to
\calo_{\mathbb{P}^2_0}(-k)\to\calo_D(-2)\to0$. 

Hence, $\dim\bar{Y}\le0$ and therefore, denoting ${}^i\cali:=\cale 
xt^i(\cali_{\bar{Y},\mathbb{P}^2_0}(-1),\op3),\ i\ge1$, we obtain 
${}^1\cali=\op2(2)$, $\dim{}^2\cali\le0$, ${}^3\cali=0$. Besides, set 
${}^i\calf:=\cale xt^i(\calf,\op3)$, ${}^i\call:=\cale xt^i(\call,
\op3)$, $i\ge1$, and remark that, for the reflexive sheaf $\call$, 
$\dim{}^1\call=0$, ${}^i\call=0,\ i=2,3$ (see \cite[proof of Thm. 
2.5]{Hartreflexive}). 
Now, applying to \eqref{E2 and Q} the functor $\cale xt^{\bullet}
(-,\op3)$ and using the above relations we obtain the equalities
${}^i\calf=0,\ i=2,3$, and an exact sequence $0\to\call^{\vee}
\xrightarrow{\zeta^{\vee}}\calf^{\vee}\to\op2(2)\xrightarrow{\mu}{}^1
\call\to{}^1\calf\to{}^2\cali\to0$, wherefrom $\dim{}^1\calf\le0$ and  
$\ker\mu\simeq\cali_{Z,\mathbb{P}^2_0}(2)$ for some subscheme $Z$ of 
$\mathbb{P}^2_0$, of dimension $\dim Z\le0$. We thus obtain an exact 
triple $0\to\call^{\vee}\xrightarrow{\zeta^{\vee}}\calf^{\vee}\to
\cali_{Z,\mathbb{P}^2_0}(2)\to0$ and the relation $\cale xt^1(\cali_{Y,
\mathbb{P}^2_0}(2),\op3)=\op2(-1)$. 
Next, applying to the last triple the 
functor $\cale xt^{\bullet}(-,\op3)$ yields an exact sequence
$0\to \calf^{\vee\vee}\xrightarrow{\zeta^{\vee\vee}}\call^{\vee\vee}\to
\op2(-1)\xrightarrow{\nu}\cale xt^1(\calf^{\vee},\op3)$. By \cite[Cor. 
1.2]{Hartreflexive} $\calf^{\vee}$ is a reflexive rank 2 sheaf, hence
$\dim\cale xt^1(\calf^{\vee},\op3)\le0$ by \cite[Rem. 2.7.1]
{Hartreflexive}, and therefore $\ker\nu\simeq\cali_{W,\mathbb{P}^2_0}
(-1)$ for some subscheme $W$ of $\mathbb{P}^2_0$, of dimension $\dim 
W\le0$. Thus the last sequence leads to an exact triple $0\to 
\calf^{\vee\vee}\xrightarrow{\zeta^{\vee\vee}}\call^{\vee\vee}\to\cali
_{W,\mathbb{P}^2_0}(-1)\to0$ which together with \eqref{E2 and Q} fits 
in a commutative diagram
$$
\xymatrix{
\calf^{\vee\vee}\ar@{>->}[r]^-{\zeta^{\vee\vee}}&\call^{\vee\vee}\ar
@{>>}[r]\ar@{=}[d]^-{\mathrm{can}}&\cali_{W,\mathbb{P}^2_0}(-1)\\
\calf\ar@{>->}[r]^{\zeta}\ar@{>->}[u]^-{\mathrm{can}} & 
\call\ar@{>>}[r] & \cali_{\bar{Y},\mathbb{P}^2_0}(-1).\ar[u]^-{c}}
$$
Besides, the above stated relations ${}^i\calf=0,\ i=2,3$, $\dim{}^1
\calf\le0$ show that the sheaf $\calf$ is locally free outside the set 
of dimension $\le0$, and this shows that the sheaf $\kappa=\mathrm{
coker}(\calf\xrightarrow{\mathrm{can}}\calf^{\vee\vee})$ has dimension
$\le0$ and by the Snake Lemma $\kappa$ is a subsheaf of $\ker c$. 
However, the sheaf has no subsheaves of dimension 0. Hence, $\kappa
=0$ and $\calf\xrightarrow{\mathrm{can}}\calf^{\vee\vee}$ is an 
isomorphism, i. e. $\calf$ is reflexive. A standard computation with the
triple \eqref{E2 and Q} yields the values of Chern classes of $\calf$,
The triple \eqref{E2 and Q dual} and the equality \eqref{lY+lZ=6} are 
obtained by applying to \eqref{E2 and Q} the functor $\cale 
xt^{\bullet}(-,\op3)$ and using formulas for Chern classes of $\calf$ 
and $\call$. The inequality $0\le c_3(\calf)\le12$ follows from 
\eqref{lY+lZ=6}.
\end{proof}

\begin{Lema}\label{P20= P2}
The projective planes $\p2$ and $\mathbb{P}^2_0$ defined in 
\eqref{notation for P2} coincide.
\end{Lema}
\begin{proof}
The middle horizontal triple $0\to\cale\to\mathrm{coker}~
\sigma\to\op3(2)\to0$ in \eqref{diag c2} as an extension is defined
by a nonzero element in $\Ext^1(\cale,\op2(2))\simeq H^1(\cale(-2))$.
Since $h^1(\cale(-2))=1$ by \eqref{h1=0,1}, it follows that the 
sheaf $\mathrm{coker}~\sigma$ is defined by $\cale$ uniquely up to an
isomorphism. Since $h^0(\call(-1))=0$ as $\call$ is stable by Lemma
\ref{sheaf Q}, the twisted by $\op3(-1)$ middle vertical triple $0\to
\op3\to\mathrm{coker}\sigma(-1)\to\call(-1)\to0$ in \eqref{diag c2}
shows that $h^0(\mathrm{coker}\sigma(-1))=1$. Hence, $\call=\call(M^{
\bullet})$ 
is uniquely up to an isomorphism defined by $\mathrm{coker}\sigma$ 
(and therefore by $\cale$) as 
\begin{equation}\label{G by E}
\call(M^{\bullet})=(\mathrm{coker}\sigma(-1)/\op3)(1).\ \ \ \ \ \ 
\end{equation}
Then the lower horizontal triple in \eqref{diag c2} shows that the 
plane $\p2=\p2(M^{\bullet},\beta)$ is determined uniquely by $\cale$ as
\begin{equation}\label{P20 by E}
\p2(M^{\bullet},\beta)=\mathrm{Supp}(\call(M^{\bullet})/\cale).
\end{equation}
Next, it follows from \eqref{notation for P2} that
\begin{equation}\label{P20=...}
\p2(M^{\bullet},\alpha)=\p2(M^{\bullet\vee},\beta^{\vee}),
\ \ \ \text{resp.,}\ \ \ \p2(M^{\bullet},\beta)=\p2(M^{\bullet
\vee},\alpha^{\vee}),
\end{equation}
where $M^{\bullet\vee}:
\ 0\to (M^1)^{\vee}\xrightarrow{\beta^{\vee}}(M^0)^{\vee}\xrightarrow{
\alpha^{\vee}}(M^{-1})^{\vee}\to0$ is the monad dual to $M^{\bullet}$. 
The monad $M^{\bullet\vee}$ defines the monad dual to \eqref{monad4}: 
$0\to\op3(-2)\overset{\tau^{\vee}}{\to}E\overset{\sigma^{\vee}}{\to}
\op3(2)\to0$ with $\cale^{\vee}=\ker(\sigma^{\vee})/\mathrm{im}(\tau^{
\vee})$, and the argument dual to the above yields the formulas dual to
\eqref{G by E} and \eqref{P20 by E}:
$\p2(M^{\bullet\vee},\beta^{\vee})=\mathrm{Supp}(\call(M^{\bullet\vee})
/\cale^{\vee})$,\ $\call(M^{\bullet\vee})=(\mathrm{coker}(\tau^{\vee})
(-1)/\op3)(1)$.
Since $\cale^{\vee}\simeq\cale$ these formulas mean in view of 
\eqref{P20=...} that the plane $\mathbb{P}^2_0=\p2(M^{\bullet},\alpha)
$ is uniquely defined by $\cale$ via the same construction as above, 
hence it coincides with $\p2=\p2(M^{\bullet},\beta)$.
\end{proof}

\vspace{2mm}
Let ${\calf}\in\calr(0,2,2k)$ be the reflexive sheaf defined in 
\eqref{E2}, where $0\le k\le6$ by Lemma \ref{sheaf E2}, i. e., 
\begin{equation}\label{R(0,2)}
[\calf]\in\underset{0\le k\le6}{\bigsqcup}\calr_k,\ \ \ \ \ \ \ \ \ \ 
\ \calr_k:=\calr(0,2,2k).
\end{equation}
Formulas \eqref{def H0}, \eqref{in H0} and Lemmas \ref{sheaf Q},
\ref{sheaf E2} and \ref{P20= P2} yield
\begin{Prop}\label{two steps}
There is an inclusion
\begin{equation}\label{cup Hk}
\calh\smallsetminus(\calh\cap\calg(2,1))\subset\underset{0\le 
k\le6}{\bigsqcup}\calh_k,\ \ \ \ \ \ \ \ \ \ where
\end{equation} 
\begin{equation}\label{def Hk}
\begin{split}
& \calh_k=\{[\cale]\in\calb(5) \ |\ 
\cale\ \mathrm{is\ obtained\ from}\ \calf, \mathrm{where}\ 
[\calf]\in\calr_k,\\
& \mathrm{by\ the\ two\ subsequent\ elementary\ transformations\  
\eqref{two transfns}\ below}\},
\end{split}
\end{equation} 
\begin{equation}\label{two transfns}
\begin{split}
& 0\to \call(-1)\to\calf\xrightarrow{\rho}\cali_{\bar{Z},\p2}
(2)\to0,\ \ \ \ \ \ \ (step\ 1)\\
& 0\to\cale\to\call\xrightarrow{\gamma}\op2(2)\to0,
\ \ \ \ \ \ \ \ \ \ \ \ \ \ \ (step\ 2)
\end{split}
\end{equation}
where $\p2$ is some plane in $\p3$, $\bar{Z}\subset\p2$, 
$\dim\bar{Z}\le0$, $\ell(\bar{Z})=6-k$, and $\call$ is a stable 
reflexive sheaf from $\calr(1,4,6)$.  
\end{Prop}

\medskip

\section{Geometric properties of sheaves $\calf$ and moduli of 
cohomology bundles $\cale$ of monads (\ref{invariant131})}
\label{properties of F}

\medskip

In this section we explore in detail the geometry of the reflexive 
sheaves $\calf$ described in Lemma \ref{sheaf E2}. The main result of
this study will be the upper estimates for the dimensions of the moduli
space of sheaves $\calf$ and sheaves $\call$ obtained from $\calf$ by
the elementary transformation \eqref{E2 and Q dual}. These estimates 
are obtained in Propositions \ref{Prop F unstable} and \ref{Prop F 
stable}  below. This will eventually lead to the proof of Theorem 
\ref{Cal H}.

Denote
$$
\calr_k^u:=\{[\calf]\in\calr_k\ |\ \calf\ \textrm{is unstable}\},
\ \ \ \ \ \ 
\calr_k^s:=\{[\calf]\in\calr_k\ |\ \calf\ \textrm{is stable}\},
$$
$$
\calh_k^u:=\{[\cale]\in\calh_k\ |\ \cale\ \textrm{is obtained from}\ 
\calf\ \textrm{in}\ \eqref{def Hk}, \textrm{where}\ [\calf]\in\calr_k^u
\},
$$
$$
\calh_k^s:=\{[\cale]\in\calh_k\ |\ \cale\ \textrm{is obtained from}\ 
\calf\ \textrm{in}\ \eqref{def Hk}, \textrm{where}\ [\calf]\in\calr_k^s
\},
$$
where $0\le k\le6$. Thus, $\calr_k=\calr_k^u\sqcup\calr_k^s$ and 
\eqref{in H0} and \eqref{cup Hk} yield:
\begin{equation}\label{cup Hk us}
\calh\smallsetminus(\calh\cap\calg(2,1))\subset\underset{0\le 
k\le6}{\bigsqcup}(\calh_k^u\sqcup\calh_k^s).
\end{equation} 
The estimate for the dimension of $\calh\smallsetminus(\calh\cap\calg(
2,1))$ will eventually follow from the computations of dimensions of
$\calh_k^u$ and $\calh_k^s$ which we will give below. For this, we 
start with an explicit description of the spaces $\calr_k^u$ and 
$\calr_k^s$.
\begin{Prop}\label{Prop F unstable}
(i) $\calr_k^u\ne\emptyset$ only for $0\le k\le3$, and any sheaf 
$\calf$ from $\calr_k^u$ fits in an exact triple
\begin{equation}\label{tripl for F}
0\to\op3\xrightarrow{s}\calf\xrightarrow{u}\cali_{C,\p3}\to0,
\end{equation}
where $C=\mathrm{Sing}(\calf/\op3)$ is a l.c.i. curve of degree 2 in 
$\p3$, $\chi(\calo_C)=4-\frac{1}{2}c_3(\calf)=4-k$.\\
(ii) If $C$ is reduced, then either $c_3(\calf)=4$ and $C$ is a 
disjoint union $l_1\sqcup l_2$ of two projective lines in $\p3$, or
$c_3(\calf)=6$, then $C$ is a plane conic in $\p3$. \\
(iii) If $C$ is nonreduced then $C$ is the scheme structure 
of multiplicity two on a projective line $l$ in $\p3$ defined by an 
exact sequence
\begin{equation}\label{Ferrand k}
0\to\cali_{C,\p3}\to\cali_{l,\p3}\to\calo_l(m)\to0,\ \ \ \ \ \ \ 
-1\le m=2-k\le2.
\end{equation}
(iv) The moduli spaces $\calr_k^u$ are varieties of dimensions
\begin{equation}\label{dim Rk}
\dim\calr_0^u=\dim\calr_3^u=14,\ \ \ \ \dim\calr_1^u=\dim\calr_2^u=13,
\end{equation}
and they are fine.
\end{Prop}
\begin{proof}
(i)-(iii). By Lemma \ref{sheaf E2}, we have $c_1(\calf)=0$, $c_2(\calf)
=2$. Since $\calf$ is unstable, it follows from \cite[Lemma 3.1]
{Hartreflexive} that $H^0(\calf)\ne0$. Besides, from
\eqref{h0(calg(-1))=0} and the triple \eqref{E2 and Q} twisted by 
$\op3(-1)$ we obtain $H^0(\calf(-1))=0$. Take a section $0\ne s\in 
H^0(\calf)$ and define a subscheme $C$ in $\p3$ by the ideal sheaf 
$\cali_{C,\p3}=\mathrm{im}(u:\calf\xrightarrow[\simeq]{\mathrm{can}}
\calf^{\vee}\xrightarrow{s^{\vee}}\op3)$. (The canonical isomorphism 
$\mathrm{can}:\ \calf\xrightarrow{\simeq}\calf^{\vee}$ follows since 
$c_1(\calf)=0.$) From the equality $H^0(\calf(-1))=0$, by 
\cite[Thm 4.1]{Hartreflexive} we obtain that: \\
(a) $C$ is a Cohen-Macaulay curve in $\p3$ satisfying the triple 
\eqref{tripl for F}, so that $\deg C=c_2(\calf)=2$, and\\
(b) the triple \eqref{tripl for F} is exact, and the equality $\chi(
\calo_C)=4-k$ follows from this triple and \cite[
Thm. 2.3]{Hartreflexive}; moreover, \eqref{tripl for F} defines an 
extension
\begin{equation}\label{extension1}
\xi\in\Ext^1(\cali_{C,\p3},\op3)\simeq H^0(\cale xt^1(\cali_{C,\p3},
\op3))\simeq H^0(\cale xt^2(\calo_C,\op3)).
\end{equation} 
(Here we use standard isomorphisms relatig global $\Ext$-groups and
$\cale xt$-sheaves - see \cite[Sec. 4]{Hartreflexive}.) If $C$ is a 
reduced curve then, since $\deg C=2$, $C$ is either a disjoint union 
$l_1\sqcup l_2$ of lines, or a conic. If $C$ is nonreduced, then $C$ 
is the scheme structure of multiplicity two on a projective line $l$ 
(in the sense of \cite[Definition on p. 58]{Ch}). Moreover, since $C$ 
is Cohen-Macaulay, the sheaf $\cali_{l,\p3}/\cali_{C,\p3}$ has no 
0-dimensional torsion. Hence, by \cite[Claim on p. 59]{Ch}, the exact
triple \eqref{Ferrand k} follows and, moreover, $C$ is a locally 
complete intersection. The triples \eqref{Ferrand k} and 
\eqref{tripl for F} yield the equality $m=2-\frac{1}{2}c_3(\calf)$=2-k.
Furthermore, \eqref{Ferrand k} and the isomorphism
\begin{equation}\label{conormal}
\cali_{l,\p3}|_l\simeq N_{l/\p3}^{\vee}\simeq\calo_l(-1)^{\oplus2}
\end{equation}
imply $m\ge-1$. Besides, $2-m=k=\frac{1}{2}c_3(\calf)\ge0$, as $\calf$ 
is reflexive. Thus, $-1\le m\le2$ and therefore $0\le k\le3$.\\ 
(iv) Consider the varieties $\calc_k=\{C\ |\ C\ \textrm{is a l.c.i. 
curve of degree 2 in}\ \p3, \chi(\calo_C)=4-k\}$, $0\le k\le3$. From 
(i)-(iii) and \cite[Remark 1.3]{Ch} it follows that $\calc_k$ are
rational varieties of dimensions
\begin{equation}\label{dim Ck}
\dim\calc_0=11,\ \ \ \ \dim\calc_1=9,\ \ \ \ \dim\calc_2=\dim\calc_3=8.
\end{equation} 
Note that \eqref{Ferrand k} yields an exact triple $0\to\calo_l(2-k)\to
\calo_C\to\calo_l\to0$,\ $k=\frac{1}{2}c_3(\calf)$. 
Applying to it the functor $\cale xt^2(\calo_l,
\op3)$ and using the relations $\cale xt^2(\calo_l,\op3)\simeq\det(N
_{l/\p3})\simeq\calo_l(2)$, $\cale xt^i(\calo_l,\op3)=0,\ i=1,3,$ (see 
\cite[pp. 49-50]{okonek}) we obtain an exact triple $0\to\calo_l(2)\to
\cale xt^2(\calo_C,\op3)\xrightarrow{\epsilon}\calo_l(k)\to0$ which, 
together with \eqref{extension1}, yields
\begin{equation}\label{dim Ext1}
\begin{split}
&\dim\Hom(\cali_{C,\p3},\op3)=1,\ \Ext^{\ge2}(\cali_{C,\p3},\op3)=0,\\
& \dim\Ext^1(\cali_{C,\p3},\op3)=h^0(\cale xt^2(\calo_C,\op3)=k+4,
\end{split}
\end{equation}
Now, by (i)-(iii), for $0\le k\le3$, the spaces $\calr_k^u$ are 
described as:
$\calr_k^u=\{([\calf],\langle\xi\rangle)\ |\ [\calf]\in\calr_k,\ 
\calf\ \textrm{fits in}\ \eqref{tripl for F},\ \langle\xi\rangle\in
\mathbb{P}(\Ext^1(\cali_{C,\p3},\op3))\}=\{(C,\langle\xi\rangle)\ |\ 
C\in\calc_k,\ \langle\xi\rangle\in\mathbb{P}(\Ext^1(\cali_{C,\p3},\op3
))\}$.
This, together with \eqref{dim Ext1}, shows that $\calr_k$ is a 
projective fibration with fibre $\mathbb{P}^{k+3}$ over $\calc_k$,
and \eqref{dim Ck} yields \eqref{dim Rk}. Note that there exist 
universal flat families $\Gamma\subset\boldsymbol{\calc}_k$ of curves 
$C$, and in view of \eqref{dim Ext1} and \cite[Thm. 1.4]{L} the 
sheaves $\lext^i_{p_{\calc_k}}(\cali_{\Gamma,\boldsymbol{\calc}_k},
\calo_{\boldsymbol{\calc}_k})$ commute with the base change. Hence, by
\cite[Prop. 4.2]{L} there exist universal sheaves $\boldsymbol{\calf}$ 
on $\boldsymbol{\calr}_k^u$, i. e., $\calr_k^u$ are fine moduli spaces.
\end{proof}

\begin{Prop}\label{Prop F stable}
Suppose that $[\calf]\in\calr_k^s$. Then the following statements 
hold.\\ 
(i) $\calr_k^s\ne\emptyset$ only for $0\le k\le2$.\\
(ii) $\dim\calr_k^s=13,\ k=0,1,2$.\\
(iii) For $0\le k\le2$ and any $[\calf]\in\calr_k^s$, 
$\dim\Ext^1(\calf,\calf)=13,\ \ 
\Ext^2(\calf,\calf)=0$.\\
(iv) For any $\p2\subset\p3$, $h^0(\calf_{\p2}(2))=10$, $ h^1(\calf
_{\p2}(2))=0$.
\end{Prop}
\begin{proof}
Statements (i)-(iii) are proved in \cite[Sec. 2]{Ch}. The equalities in
statement
(iv) follow from the exact triple $0\to\calf(1)\to\calf(2)\to\calf
_{\p2}(2)\to0$ and \cite[Tables 2.8.1 and 2.12.2]{Ch} for $k=2,4$ and, 
respectively, from \cite[\S9]{Hart1} for $k=0$.
\end{proof}

We next proceed to a detailed description of the relation between the 
spaces $\calh_k^u$ and $\calr_k^u$ for $0\le k\le3$ and the spaces 
$\calh_k^s$ and $\calr_k^s$ for $0\le k\le2$ given by steps 1 and 2 of 
formula \eqref{two transfns}. Denote
$$
{}^2S_{\calf}^u:=\{\p2\in\check{\mathbb{P}}^3\ |\ \dim(C\cap\p2)=1,\ 
C\subset\p2,\ \calo_C=\op3/(\calf/\op3)\},\ \ [\calf]\in\calr_3^u,\ \ 
\ \ \
$$
$$
{}^2S_{\calf}^u:=\emptyset,\ \ [\calf]\in\calr_k^u,\ \ \ k\le2,
\ \ \ \ \ \ \ \ \ \ \ \ \ \ \ \ \ \ \ \ \ \ \ \ \ \ \ \ \ \ 
\ \ \ \ \ \ \ \ \ \ \ \ \ \ \ \ \ \ \ \ \ \ \ \ \ \ \ \ \ \ \ \ \ \ \ \ \ \ \ 
$$
$$
{}^1S_{\calf}^u:=\{\p2\in\check{\mathbb{P}}^3\ |\ \dim(C\cap\p2)=1,\ 
\calo_C=\op3/(\calf/\op3),\ C\not\subset\p2\},\ \ 
[\calf]\in\calr_k^u,\ \ \ \ \
$$
$$
{}^2S_{\calf}^s={}^1S_{\calf}^s:=\emptyset,\ \ \ \text{if}\ 
[\calf]\in\calr_k^s,
\ \ \ \ \ \ \ \ \ \ \ \ \ \ \ \ \ \ \ \ \ \ \ \ \ \ \ \ \ \ 
\ \ \ \ \ \ \ \ \ \ \ \ \ \ \ \ \ \ \ \ \ \ \ \ \ \ \ \ \ \ \ \ \ \ \ \ \ \ \ 
$$
$$
{}^0S_{\calf}^u:=\{\p2\in\check{\mathbb{P}}^3\smallsetminus({}^1S
_{\calf}^u\cup{}^2S_{\calf}^u)\ |\ \mathrm{Sing}\calf\cap\p2
\ne\emptyset\},\ \ [\calf]\in\calr_k^u,\ \ \ \ \ \ \ \ \ \ \ \ \ 
\ \ \ \ \ \ \ \ \ \ \ \ \ \  
$$
$$
{}^0S_{\calf}^s:=\{\p2\in\check{\mathbb{P}}^3\smallsetminus({}^1S
_{\calf}^s\cup{}^2S_{\calf}^s)\ |\ \mathrm{Sing}\calf\cap\p2\ne
\emptyset\},\ \ [\calf]\in\calr_k^s,\ \ \
\ \ \ \ \ \ \ \ \ \ \ \ \ \ \ \ \ \ \ \ \ \ \ \ 
$$
$$
{}^{-1}S_{\calf}^u:=\check{\mathbb{P}}^3\smallsetminus({}^0
S_{\calf}^u
\cup{}^1S_{\calf}^u\cup{}^2S_{\calf}^u),\ \ \ \ 
\ \ \ \ \ \ \ \ \ \ \ \ \ \ \ \ \ \ \ \ \ \ \ \ \ \ \ \ \ \ \
\ \ \ \ \ \ \ \ \ \ \ \ \ \ \ \ \ \ \ \ \ \ \ \ \ \ \ \ \ \ \ \ 
$$
$$
{}^{-1}S_{\calf}^s:=\check{\mathbb{P}}^3\smallsetminus({}^0
S_{\calf}^s\cup{}^1S_{\calf}^s\cup{}^2S_{\calf}^s),\ \ \ \ 
\ \ \ \ \ \ \ \ \ \ \ \ \ \ \ \ \ \ \ \ \ \ \ \ \ \ \ \ \ \ \
\ \ \ \ \ \ \ \ \ \ \ \ \ \ \ \ \ \ \ \ \ \ \ \ \ \ \ \ \ \ \ \ 
$$
$$
\cald_k^u:=\calr_k^u\times\check{\mathbb{P}}^3,\ \ \ \ \ 
{}^i\cald_k^u:=\{([\calf],\p2)\in\cald_k^u\ |\ \p2\in{}^iS_{
\calf}^u\},\ \ \ \ \ -1\le i\le2,\ \ \ \ \ \ \ \ \ \ \ \ \ \ \ 
$$
$$
\cald_k^s:=\calr_k^s\times\check{\mathbb{P}}^3,\ \ \ \ \ 
{}^i\cald_k^s:=\{([\calf],\p2)\in\cald_k^s\ |\ \p2\in{}^i
S_{\calf}^s\},\ \ \ \ \ -1\le i\le2,\ \ \ \ \ \ \ \ \ \ \ \ \ \ \
$$
$$
\cald_k:=\cald_k^u\sqcup\cald_k^s=\underset{-1\le i\le2}{\sqcup}
{}^i\cald_k,\ \ \mathrm{where}\ \ {}^i\cald_k:={}^i\cald_k^u
\sqcup{}^i\cald_k^s,\ \ \  -1\le i\le2. \ \ \ \ \ \ \ \ \ \ \ 
\ \ \ 
$$
Clearly, ${}^i\cald_k^u$ (respectively, ${}^i\cald_k^s$) are 
locally closed in $\cald_k^u$ (respectively, $\cald_k^s$) and
\begin{equation}\label{Duk,Dsk}
\cald_k^u=\underset{-1\le i\le2}{\bigsqcup}{}^i\cald_k^u,\ \ \ \ 
\ \ \ \cald_k^s=\underset{-1\le i\le2}{\bigsqcup}{}^i\cald_k^s,\ 
\ \  \ \ \ \ \ \ \ \ \ \ \  \ \ \ \ \ \ \ \ \ \ \  \ \ \ \ \ \ \ 
\ \ \ \ \ \ \ 
\end{equation}
\begin{equation}\label{dim Diuk,Disk}
\dim{}^i\cald_k^u\le\dim\calr_k^u+2-i,\ \ \ 
\dim{}^i\cald_k^s\le\dim\calr_k^s+2-i,\ \ \ -1\le i\le2. 
\end{equation}
Next, denote
\begin{equation}\label{Def Pi}
\begin{split}
& \Ph(\calf,\p2):=\{\langle\rho\rangle\in\mathbb{P}(\Hom(\calf,
\op2(2)))|\mathrm{im}(\rho:\calf\to\op2(2))=\cali_{Z,\p2}(2)\\ 
& \mathrm{for\ a\ subscheme}\ Z\subset\p2,\ \dim Z\le0,\ 
\ell(Z)=6-k\},\ \ \  ([\calf],\p2)\in\cald_k, 
\end{split}\ \ \
\end{equation}

\vspace{1mm}
\begin{equation}\label{Def K,Q}
\begin{split}
& K_{([\calf],\p2)}:=\{\langle\rho\rangle\in\Ph(\calf,\p2)\ |\ 
[\call_{\rho}:=(\ker\rho)(1)]\in\calr(1,4,6)\ \mathrm{is\ stable}\},\\
& ([\calf],\p2)\in\cald_k,\calq_k:=\{([\calf],\p2,\langle\rho\rangle)\ 
|\ ([\calf],\p2)\in\cald_k,\ \langle\rho\rangle\in K_{([\calf],
\p2)}\},\\
& \calq_k\xrightarrow{p_{1k}}\cald_k\ \text{is the forgetful map},\ \ 
p_{1k}^{-1}([\calf],\p2)=K_{([\calf],\p2)},
\end{split}
\end{equation}

\vspace{1mm}
\begin{equation}\label{Def Sigma,T}
\begin{split}
& \Sigma_{([\calf],\p2,\langle\rho\rangle)}:=\{\langle\gamma\rangle\in
\mathbb{P}(\Hom(\call_{\rho},\op2(2)))\ |\ \gamma:\ \call_{\rho}\to
\op2(2)\ \text{is an} \\ 
& \text{epimorphism and}\ \ker\gamma\ \text{is locally free}\},\ 
\ \ \ \ \ ([\calf],\p2,\langle\rho\rangle)\in\calq_k,\\  
& T_k:=\{([\calf],\p2,\langle\rho\rangle,\langle\gamma\rangle)\ |\ 
([\calf],\p2,\langle\rho\rangle)\in\calq_k,\ \langle\gamma\rangle\in
\Sigma_{([\calf],\p2,\langle\rho\rangle)}\},\\
& T_k\xrightarrow{p_{2k}}\calq_k\ \text{is the forgetful map},\ \ 
p_{2k}^{-1}([\calf],\p2,\langle\rho\rangle)=\Sigma_{([\calf],\p2,
\langle\rho\rangle)},
\end{split}
\end{equation}

\vspace{1mm}
\begin{equation}\label{Def Qk,Tk}
\begin{split}
& \calq_k^u:=p_{1k}^{-1}(\cald_k^u\cap p_{1k}(\calq_k)),\ \ \  
T_k^u:=p_{2k}^{-1}(\calq_k^u\cap p_{2k}(T_k)),\\
& \calq_k^s:=p_{1k}^{-1}(\cald_k^s\cap p_{1k}(\calq_k)),\ \ \  
T_k^s:=p_{2k}^{-1}(\calq_k^s\cap p_{2k}(T_k)).
\end{split}
\end{equation}
(Here $0\le k\le3$ and  $0\le k\le2$ in unstable case and stable case,
respectively.) Since $\cald_k=\cald_k^u\sqcup\cald_k^s$, it follows 
that
\begin{equation}\label{Qk,Tk}
\calq_k=\calq_k^u\sqcup\calq_k^s,\ \ \ \ \ \ T_k=T_k^u\sqcup T_k^s,\ \ \ \ \ \ \ \ 0\le k\le3.
\end{equation}
(For consistency, in \eqref{Qk,Tk} and below we set $\calq_3^s=T_3^s=
\emptyset$.)
Since the stability of the sheaf $\call_{\rho}$ is an open property in 
flat families \cite[Prop. 2.3.1]{HL} it follows that 
\begin{equation}\label{open subset K}
p_{1k}^{-1}([\calf],\p2)=K_{([\calf],\p2)}
\xymatrix{\ar@{^{(}->}[r]^{\textrm{open}} &}
\Ph(\calf,\p2),\ \ \ 
\ \ \ \ \ \ \ \ ([\calf],\p2)\in\cald_k.
\end{equation}
Take any point $([\calf],\p2,\langle\rho\rangle,\langle\gamma\rangle)$ 
Since by definition $[\call_{\rho}]\in\calr(1,4,6)$ is stable and 
$\cale=\ker\gamma$ is a vector bundle, we obtain from the second 
triple \eqref{two transfns} that $[\cale]\in\calr(0,5,0)$ is also 
stable, i. e. $[\cale]\in\calb(5)$. Thus, we obtain a natural map
\begin{equation}\label{map f_k}
f_k:\ T_k\to\calb(5),\ \ \  ([\calf],\p2,\langle\rho\rangle,\langle
\gamma\rangle)\mapsto[\ker\gamma]
\end{equation}
and by Proposition \ref{two steps}, $\calh_k^u\subset f_k(T_k^u),\ 
\calh_k^s\subset f_k(T_k^s)$. This, together with \eqref{cup Hk us} and
and the second formula \eqref{Qk,Tk} yields
\begin{equation}\label{H in}
\calh\smallsetminus(\calh\cap\calg(2,1))\subset\underset{0\le 
k\le3}{\bigsqcup}f_k(T_k).
\end{equation}
It will follow from computations below that $T_k$ are disjoint unions 
of schemes and $f_k$ are morphisms for each of these schemes and all 
admissible values of $k$.

\begin{Lema}\label{aux}
Let $([\calf],\p2)\in\cald_k$ and $\Ph(\calf,\p2)\ne\emptyset$. Then:\\
(i) there is an open embedding $j:\ \Ph(\calf,\p2)\xymatrix{\ar@
{^{(}->}[r]^{\mathrm{open}} &}\mathbb{P}(H^0((\calf_{\p2})^{\vee\vee}(2
)))$ and for any $\langle\rho\rangle\in\Ph(\calf,\p2)$ there exists a 
subscheme $W(\rho)$ of $\p2$, $\dim W(\rho)\le0$, and an exact triple
\begin{equation}\label{triple W}
0\to\calf_{\p2}(2)\xrightarrow{\mathrm{can}}(\calf_{\p2})^{\vee\vee}
(2)\to\calo_{W(\rho)}\to0,\ \ \ \ \ \ \ \ \ \ell_{W(\rho)}=k;
\end{equation}
(ii) if $\Sigma_{([\calf],\p2,\langle\rho\rangle)}\ne\emptyset$ for 
$([\calf],\p2,\langle\rho\rangle)\in\calq_k$, then there is an open 
embedding $\Sigma_{([\calf],\p2,\langle\rho\rangle)}\xymatrix{
\ar@{^{(}->}[r]^{\mathrm{open}}&}\mathbb{P}(\Hom(\call,\op2(2)))\simeq
\mathbb{P}^{10}$;\\
(iii) if $([\calf],\p2)\in{}^{-1}\cald_k$, then 
$k=0$, $h^0(\calf_{\p2}^{\vee\vee}(2))=h^0(\calf_{\p2}(2))=10$
;\\
(iv) if ${}^0\cald_k\ne\emptyset$ and $([\calf],\p2)\in{}^0\cald_k$, 
then $1\le k\le2$, $h^0(\calf_{\p2}(2))=10$
and 
\begin{equation}\label{h0 for k=1,2,3}
h^0((\calf_{\p2})^{\vee\vee}(2))=10+k 
;
\end{equation}
(v) if ${}^1\cald_k\cup{}^2\cald_k\ne\emptyset$ and $([\calf],\p2)\in
{}^1\cald_k\cup{}^2\cald_k$, then the equalities 
\eqref{h0 for k=1,2,3} hold for $k=1,2,3$ and
\begin{equation}\label{h0 for k=0}
h^0((\calf_{\p2})^{\vee\vee}(2))=h^0((\calf_{\p2})(2))=11,\ \ \  
\text{if}\ \ \ k=0.\\
\end{equation} 
\end{Lema}
\begin{proof}
(i) Take any $\langle\rho\rangle\in\Ph(\calf,\p2)$. By the 
definition of $\Ph(\calf,\p2)$ we may consider $\rho$ as a composition 
$\rho:\calf\xrightarrow{\otimes\op2}\calf_{\p2}\xrightarrow{\bar{\rho}}
\cali_{Z,\p2}(2)$ with $\dim Z\le0$, $\ell_Z=6-k$. As $\calf$ is 
reflexive, $\calf_{\p2}$ has no torsion as a $\op2$-sheaf 
\cite[\S1]{Hartreflexive}. Hence, $\ker\bar{\rho}$ is a rank 1 torsion 
free $\op2$-sheaf. Since $c_1(\calf_{\p2})=0$, $c_2(\calf_{\p2})=2$, 
$\ker\bar{\rho}\simeq\cali_{W,\p2}(-2)$, where $\dim W\le0$, and there 
is an exact triple
\begin{equation}\label{tr with W}
0\to\cali_{W,\p2}\xrightarrow{\theta_{\rho}}\calf_{\p2}(2)\xrightarrow{
\bar{\rho}}\cali_{Z,\p2}(4)\to0.\ \ \ \ \ 
\end{equation}
The monomorphism $\theta=\theta_{\rho}$ in this triple extends to a
commutative square
\begin{equation}\label{diag 01}
\xymatrix{
\cali_{W,\p2}\ar@{>->}[r]^-{\theta_{\rho}}\ar@{>->}[d]_-{\mathrm{can}} 
& \calf_{\p2}(2)\ar@{>->}[d]^-{\mathrm{can}} &\\
\op2\ar@{>->}[r]^-{\theta_{\rho}^{\vee\vee}} & 
(\calf_{\p2})^{\vee\vee}(2),}
\end{equation} 
and we obtain a morphism $j:\Ph(\calf,\p2)\to\mathbb{P}(H^0((\calf_{\p2
})^{\vee\vee}(2))),\ \langle\rho\rangle\mapsto\theta_{\rho}^{\vee\vee}
$. To construct the inverse to $j$ morphism $\psi:(\mathrm{im}j)\to\Ph(
\calf,\p2)$, take any $(\tilde{\theta}:\op2\to(\calf_{\p2})^{\vee\vee}
(2))\in\mathrm{im}j$. The morphism $\theta:\cali_{W,\p2}\to\calf_{\p2}
(2)$ such that $\tilde{\theta}=\theta^{\vee\vee}$ is recovered 
from $\tilde{\theta}$ as $\tilde{\theta}|_{\cali_{W,\p2}}$, where 
$\cali_{W,\p2}=\mathrm{can}^{-1}(\tilde{\theta}(\op2)\cap\mathrm{can}
(\calf_{\p2}(2)))$. Then $\bar{\theta}$ defines via $\theta$ a 
morphism $\bar{\rho}$ as the quotient morphism $\calf_{\p2}(2)\to
\mathrm{\coker}\theta\simeq\cali_{Z,\p2}(4)$, and we set $\psi(\langle
\bar{\theta}\rangle):=\langle\bar{\rho}\circ(-\otimes\op2)\rangle$. 
The openness of $j$ follows from the openness of the condition
$\rho:\calf\to\op2(2)$ to be surjective.

Next, remark that, in \eqref{diag 01}, the $\op2$-sheaf $\mathrm{
\coker}\theta\simeq\cali_{Z,\p2}(4)$ has no torsion, hence there is no
nonzero morphism $\calo_W=\op2/\cali_{W,\p2}\to\mathrm{\coker}\theta$,
since $\dim W\le0$. Thus, \eqref{diag 01} and the Snake Lemma yield an 
exact triple \eqref{triple W} with $W(\rho)=W$.

\vspace{2mm}
(ii) The injection $\Sigma_{([\calf],\p2,\langle\rho\rangle)}
\hookrightarrow\mathbb{P}(\Hom(\call_{\rho},\op2(2)))\simeq\mathbb{P}
^{10}$ is an open embedding since the condition that $\gamma:\call:=
\call_{\rho}\to\op2(2)$ is an epimorphism and $\ker\gamma$ is 
locally free is open on $\langle\gamma\rangle\in\mathbb{P}(\Hom(\call,
\op2(2)))$. We thus have to show that $\dim\Hom(\call,\op2(2))=11$. 
Consider the epimorphism $\bar{\gamma}=\gamma|_{\p2}:\call_{\p2}
\twoheadrightarrow\op2(2)$. Since by definition $[\call]\in\calr(1,4,6)
$, it follows that $\ker\bar{\gamma}\simeq\cali_{Y,\p2}(-1)$ for some 
subscheme $Y$ of $\p2$, $\dim Y=0$, $\ell_Y=6$. This yields an exact 
triple $0\to\cali_{Y,\p2}(-1)\to\call_{\p2}\xrightarrow{\bar{\gamma}}
\op2(2)\to0$. Applying to it the functor $\lext^{\bullet}_{\op2}(-,\op2
(2))$ we obtain an exact triple $0\to\op2\to\lhom(\call_{\p2},\op2(2))
\to\op2(3)\to0$ which implies $\dim\Hom(\call,\op2(2))=\dim\Hom(\call
_{\p2},\op2(2))=h^0(\lhom(\call_{\p2},\op2(2)))=11$.

\vspace{2mm}
(iii) Since $([\calf],\p2)\in{}^{-1}\cald_k$, 
$(\calf_{\p2})^{\vee\vee}\simeq\calf_{\p2}$ is locally free 
$\op2$-sheaf, and \eqref{triple W} implies $k=0$. Now, if $\calf$ is unstable, then applying to \eqref{tripl for F} the functor $-\otimes
\op2(2)$ we have an exact triple
\begin{equation}\label{otimes op2(2)}
0\to\op2\to\calf_{\p2}\to\cali_{Y,\p2}(2)\to0,\ \ \ \ \  \dim Y=0,\ 
\ \ \ \ \ \ell_Y=\deg C=2, 
\end{equation}
and this triple yields the desired values of $h^i((\calf_{\p2})^{\vee
\vee}(2))=h^i(\calf_{\p2}(2))$. If $\calf$ is stable, then these 
values are given by Proposition \ref{Prop F stable}.(iv).

\vspace{2mm}
(iv) Since $([\calf],\p2)\in{}^0\cald_k\cup{}^2\cald_k\ne\emptyset$, 
the morphism $\mathrm{can}$ in \eqref{triple W} is not an isomorphism, 
hence $k=\ell_{W(\rho)}\ge1$. On the other hand, $k\le3$ by 
Propositions \ref{Prop F stable}(i) and \ref{Prop F unstable}(i). As 
above, if $\calf$ is unstable, the triple \eqref{otimes op2(2)} is 
true, which yields the equalities $h^0(\calf_{\p2}(2))=10$, $h^1(\calf
_{\p2}(2))=0$. Respectively, if $\calf$ is stable, these equalities 
follow from Proposition \ref{Prop F stable}.(iv). Whence, by 
\eqref{triple W}, we have \eqref{h0 for k=1,2,3}. 

We only have to show that, in case $\calf$ is unstable, $k\le2$. 
By the definition of the sets ${}^i\cald_k^u,\ i=0,1$, the condition 
$([\calf],\p2)\in{}^0\cald_k^u$ implies that $\p2\not\in{}^1S_{\calf}
^u$. This means that the exact triple \eqref{otimes op2(2)} is true, 
with $\dim Y=0,\ \ell_Y=\deg C=2$. Dualizing this $\op2$-triple we 
easily obtain an inequality $h^0(\lext^1(\calf_{\p2},\op2))\le h^0(
\lext^2(\calo_Y,\op2))=\ell_Y=2$ and an exact tripe $0\to\op2\to(\calf
_{\p2})^{\vee}\to\cali_{Z,\p2}(2)\to0$ for some scheme $Z\subset\p2$ 
with $\dim Z\le0$, $\ell_Z=2-h^0(\lext^1(\calf_{\p2},\op2))$. This 
triple, together with the triple \eqref{otimes op2(2)} and the 
isomorphism $(\calf_{\p2})^{\vee}\simeq(\calf_{\p2})^{\vee\vee}$, 
yields an exact triple $0\to\calf_{\p2}(2)\xrightarrow{\mathrm{can}}(
\calf_{\p2})^{\vee\vee}(2)\to K\to0$, where $K$ is an artinian sheaf 
of length $h^0(K)=h^0(\lext^1(\calf_{\p2},\op2))\le2$. Comparing this 
triple with \eqref{triple W} we obtain $K\simeq\calo_{W(\rho)}$ and
$k=\ell_{W(\rho)}\le2$.

\vspace{2mm}
(v) From the condition $([\calf],\p2)\in{}^1\cald_k\ne\emptyset$ and 
Proposition \ref{Prop F unstable} it follows that 
$\p2\cap\mathrm{Sing}\calf=l$ is a
line, if $k=0,1,2$; respectively, $\p2\cap\mathrm{Sing}\calf=C$ 
is a conic, if $k=3$. Thus, applying to the triples \eqref{tripl for 
F} and \eqref{Ferrand k} the functor $-\otimes\op2(2)$ and using the 
resolution $0\to\op3(-1)\to\op3\to\op2\to0$, we obtain the following 
exact triples, where $\dim W=0$ and if $k=0$ or 1, then $W\subset l$:
\begin{equation}\label{I restr}
\begin{split}
& 0\to\op2(2)\to\calf_{\p2}(2)\to\cali_{C,\p3}(2)|_{\p2}\to0,\\
& 0\to\calo_l(3-k)\to\cali_{C,\p3}(2)|_{\p2}\to\cali_{W,\p2}(1)
\to0,\ \ell_W=3-k,\ k\le2,\\
& 0\to\calo_l\to\cali_{C,\p3}(2)|_{\p2}\to\op2(1)\to0,\ k=3,\ \ 
C\not\subset\p2,\ \ \ \p2\cap C=l,\\
& 0\to\calo_C(1)\to\cali_{C,\p3}(2)|_{\p2}\to\op2\to0,\ k=3,\ \ \ 
C\subset\p2.
\end{split}
\end{equation}
Since $\calf$ is locally free for $k=0$, $h^i((\calf_{\p2})
^{\vee\vee}(2))=h^i(\calf_{\p2}(2))$, from \eqref{I restr} we obtain 
\eqref{h0 for k=0}. Respectively, for $k=1,2,3$, \eqref{I restr} and
\eqref{triple W} imply \eqref{h0 for k=1,2,3}.
\end{proof}

For $0\le k\le3$, let $B\subset\p3\times\check{\mathbb{P}}^3$ be the 
graph of incidence,$\calo_B(2)=\op3(2)\boxtimes\calo_{\check{\mathbb{P
}}^3}|_B$, and let $pr_0:\boldsymbol{\cald}_k^u\to\cald_k^u$, $pr_1:
\boldsymbol{\cald}_k^u\to\boldsymbol{\calr}_k^u$, $\boldsymbol{\cald}
_k^u\to\p3\times\check{\mathbb{P}}^3$ be the projections. For each 
$m\ge0$ consider the set 
\begin{equation}\label{Def Yu}
Y=Y_{k,m}^u:=\{([\calf],\p2)\in\cald_k^u\ |\ \dim\Hom(\calf,\op2(2))
=m\},\ \ \ \ m\ge0,
\end{equation}
and set $\mathbf{Y}=pr_0^{-1}(Y)$, $q_i=pr_i|_{\mathbf{Y}}$, $i=0,1,2
$, $L=\lext_{q_0}(q_1^*\boldsymbol{\calf},q_2^*\calo_B(2))$, where
$\boldsymbol{\calf}$ is the universal sheaf on $\boldsymbol{\calr}_k^u
$ 
which exists by Proposition \ref{Prop F unstable}.(iv), $\caly=
\mathbf{Y}\times_Y\mathbf{P}(L^{\vee})$, and let $\mathbf{P}(L^{\vee})
\xleftarrow{\lambda}\caly\xrightarrow{\pi}\mathbf{Y}$ and $\caly
\xrightarrow{\mu}\mathbf{P}(L^{\vee})\xrightarrow{\nu}Y$ be the 
projections. By \cite [Satz 3]{BPS}, $Y=Y_{k,m}^u$ is locally closed 
in $\cald_k^u$ and the sheaf $L$ is a rank $m$ locally free sheaf on 
$Y$ which commutes with the base change, i. e., for $y=([\calf],\p2)\in 
Y$, one has $L|_y=\Hom(\calf,\op2(2))$. On $\caly$ there is a 
universal morphism $\boldsymbol{\rho}:\ (q_1\circ\pi)^*\boldsymbol{
\calf}\to(q_2\circ\pi)^*\calo_B(2)\otimes\mu^*\calo_{\mathbf{P}(L^{
\vee})}(1)$. Consider the set
\begin{equation}\label{Def Xukm}
X=X_{k,m}^u:=\{([\calf],\p2)\in Y_{k,m}^u\ |\ K_{([\calf],\p2)}\ne
\emptyset\}.
\end{equation}
From this definition it follows that the sheaf $\mathrm{im}
\boldsymbol{\rho}$ is flat over $\mathbf{P}(L^{\vee})$ at any point 
$x\in\nu^{-1}(X)$. This implies that $X$ is an open (possibly, empty) 
subset of $Y$, hence it is locally closed in $\cald_k^u$. Therefore, 
since in view of Proposition \ref{Prop F unstable}.(iv) $\cald_k^u$ 
are varieties, the set $\Phi_k^u=\{m\in\mathbb{Z}_{\ge0}\ |\ X_{k,m}^u
\ne\emptyset\}$ is finite. By the definitions 
\eqref{Def Pi}, \eqref{Def K,Q}, \eqref{Def Qk,Tk} and 
\eqref{Def Xukm} we have
\begin{equation}\label{Def Xku}
X_k^u:=\underset{m\in\Phi_k^u}{\bigsqcup}X_{k,m}^u=p_{1k}(\calq_k^u),
\ \ \ \ \ \ \ \ \ \ \calq_k^u=p_{1k}^{-1}(X_k^u).
\end{equation}
Denoting ${}^iX_k^u={}^i\cald_k^u\cap X_k^u$, ${}^i\calq_k^u=p_{1k}
^{-1}({}^iX_k^u)$, $-1\le i\le2$, we find from the first equality 
\eqref{Duk,Dsk} that
\begin{equation}\label{Def Quk}
\calq_k^u=\underset{-1\le i\le2}{\bigsqcup}{}^i\calq_k^u.
\end{equation}
The inclusion \eqref{open subset K} and Lemma \ref{aux}.(i) yield that
the projection $p_{1k}:\ {}^i\calq_k^u\to{}^iX_k^u$ decomposes as 
\begin{equation}\label{descrn of Qiuk}
p_{1k}:\ {}^i\calq_k^u\xymatrix{\ar@{^{(}->}[r]^{\mathrm{open}} &}
{}^i\tilde{\calq}_k^u\xrightarrow{\tilde{p}_{1k}}{}^iX_k^u,\ \ \ \ \ \ 
\ \ \ \ -1\le i\le2,\ \ \ 0\le k\le3,
\end{equation} 
where ${}^i\tilde{\calq}_k^u\xrightarrow{\tilde{p}_{1k}}{}^iX_k^u$ is 
the projective fibration with fibre $\mathbb{P}(H^0((\calf_{\p2})^{\vee
\vee}(2)))$ over an arbitrary point $([\calf],\p2)\in{}^iX_k^u$. Here
by \eqref{Def Xku} each ${}^iX_k^u$ is a disjoint union of schemes,
This shows that each ${}^i\calq_k^u$ is a disjoint union of schemes.
Since ${}^iX_k^u{}\subset\cald_k^u$, it follows from
\eqref{dim Diuk,Disk} and Lemma \ref{aux}.(iii)-(v) that
\begin{equation}\label{dim Qiuk}
\dim{}^i\calq_k^u\le\dim{}^i\calr_k^u+11-i+k,\ \ \ \ \ \ \ \ \ \ -1\le 
i\le2,\ \ \ 0\le k\le3.
\end{equation}
Thus, in view of \eqref{dim Rk}, we obtain $\dim{}^i\calq_k^u\le26$ for
all possible $i,k$, hence \eqref{Def Quk} yields
\begin{equation}\label{dim Quk}
\dim\calq_k^u\le26,\ \ \ \ \ \ 0\le k\le3.
\end{equation}

To obtain a similar estimate for dimensions of $\calq_k^s$, we define 
similarly to \eqref{Def Yu} the locally closed subsets $Y_{k,m}^s:=
\{([\calf],\p2)\in\cald_k^s\ |\ \dim\Hom(\calf,\op2(2))=m\},\ m\ge0,$ 
of $\cald_k^s$. Next, note that apriori there is no universal sheaf 
$\boldsymbol{\calf}$ on $\boldsymbol{\calr}_k^s$. However, by 
Proposition \ref{Prop F stable}, $\Ext^2(\calf,\calf)=0$ for any 
$[\calf]\in\calr_k^s$, $0\le k\le2$. This means that the deformation 
theory for $\calr_k^s$ is unobstructed, so there exists an open cover 
$\calr_k^s=\underset{j\in J}{\bigcup}\calu_j$ and universal sheaves 
$\boldsymbol{\calf}_j$ on $\boldsymbol{\calu}_j$ (see, e. g., 
\cite[Appendice A1-A2]{BrH}, \cite[Ch. 6]{F}). The existence of these
local universal sheaves is enough to show that the sets $X_{k,m}^s$ 
defined similarly to \eqref{Def Xukm} as $X_{k,m}^s:=\{([\calf],
\p2)\in Y_{k,m}^s\ |\ K_{([\calf],\p2)}\ne\emptyset\}$, are locally 
closed subsets of $\cald_k^s$. We then have, similarly to
\eqref{Def Xku}-\eqref{Def Quk}, a finite dijoint unions of schemes
$X_k^s:=\underset{m\in\Phi_k^s}{\bigsqcup}X_{k,m}^s=p_{1k}(\calq_k^s)$
and relations $\calq_k^s=p_{1k}^{-1}(X_k^s)$. Denoting  ${}^iX_k^s=
{}^i\cald_k^s\cap X_k^s$, ${}^i\calq_k^s=p_{1k}^{-1}({}^iX_k^u)$, 
$-1\le i\le2$, and mimicking the argument in 
\eqref{Def Quk}-\eqref{dim Qiuk} with $u$ substituted by $s$, we 
obtain that ${}^i\calq_k^s$,
respectively, $\calq_k^s$ are disjoint unions of schemes satisfying 
the inequalities $\dim{}^i\calq_k^s\le\dim{}^i\calr_k^s+11-i+k$, 
$-1\le i\le2$, $0\le k\le2$. These formulas and Proposition \ref{Prop 
F stable}.(ii) imply the inequalities $\dim\calq_k^s\le26,\ 0\le 
k\le2$, which, together with \eqref{dim Quk} and \eqref{Qk,Tk} yield
\begin{equation}\label{dim Qk}
\dim\calq_k\le26,\ \ \ \ \ \ 0\le k\le3.
\end{equation}
(Remind that, as in \eqref{Qk,Tk}, we set $\calq_3^s=T_3^s=\emptyset$.)
Now from Lemma \ref{aux}.(ii) and the last formula in the display 
\eqref{Def Sigma,T},
similarly to \eqref{descrn of Qiuk}, we obtain that $T_k$ are disjoint
unions of schemes and projections $p_{2k}:\ T_k\to\calq_k$ are 
morphisms which decompose as $p_{2k}:\ T_k\xymatrix{\ar@{^{(}->}[r]^{
\mathrm{open}} &}\tilde{T}_k\xrightarrow{\tilde{p}_{2k}}\calq_k$, 
where $\tilde{T}_k\xrightarrow{\tilde{p}_{2k}}{}^i\calq_k$ is the 
projective fibration with fibre $\mathbb{P}(H^0((\calf_{\p2})^{\vee
\vee}(2)))\simeq\mathbb{P}^{10}$ over an arbitrary point $([\calf],\p2,
\langle\rho\rangle)\in\calq_k$, $0\le k\le3$. This together with 
\eqref{dim Qk} yields
\begin{equation}\label{dim Tk}
\dim T_k\le36,\ \ \ \ \ \ 0\le k\le3.
\end{equation}

\vspace{2mm}
\textit{Proof of Theorem \ref{Cal H}.} It is clear from the above that
the maps $f_k:\ T_k\to\calb(5)$ defined in \eqref{map f_k} are 
morphisms. The inequality $\dim(\calh\smallsetminus(\calg(2,1)
\cap\calh))\le36$ now follows from \eqref{H in} and 
\eqref{dim Tk}. However, by \cite[Remark 3.4.1]{Hartreflexive},
any irreducible component of $\calb(5)$ has dimension at least
37. Hence, Theorem \ref{Cal H} follows.
~\hfill$\Box$

\section{Components of $\mathcal{B}(5)$}\label{B(5)}

We finally have at hand all the results needed to complete 
the proof of our second main result, namely, the 
characterization of the irreducible components of 
$\mathcal{B}(5)$ given by Main Theorem \ref{MT2}. This 
entire section  will be devoted to this goal. 

\textit{Proof of Main Theorem \ref{MT2}.}
The first ingredient of the proof is the fact, proved by 
Hartshorne and Rao, that every bundle in $\mathcal{B}(5)$ is 
cohomology of one of the monads 
\eqref{invariant011}-\eqref{invariant131}, cf. \cite[Table 5.3, 
case 5.(1)-(4)]{hartshorne1991}.

Recall that for each stable rank 2 bundle $E$ on $\p3$ with 
vanishing first Chern class, the number 
$\alpha(E):=h^1(E(-2))~\mathrm{mod}~2$ is called the 
Atiyah--Rees $\alpha$-invariant of $E$, see \cite[Definition 
on p. 237]{Hart1}. Hartshorne showed \cite[Corollary 
2.4]{Hart1} that this number is invariant on the connected 
components of the moduli space of stable vector bundles on 
$\PP$. One can easily check that the cohomologies of monads of 
the form (\ref{invariant011}) and (\ref{invariant021}) have 
$\alpha$-invariant equal to 0, while the cohomologies of the 
other three types of monads have $\alpha$-invariant equal to 
1. 

Rao showed in \cite{Rao1987} that the family of bundles 
obtained as cohomology of monads of the form 
(\ref{invariant021}) is irreducible, of dimension 36, and it 
lies in a unique component of $\calb(5)$. Since instanton 
bundles of charge 5, i. e. the cohomologies of monads of the 
form (\ref{invariant011}), yield an irreducible family of 
dimension 37, it follows that the set
\begin{equation}\label{inst comp}
\cali := \{ [E]\in\calb(5) ~|~ \alpha(E)=0 \}
\end{equation}
forms a single irreducible component of $\mathcal{B}(5)$, of 
dimension 37, whose generic point corresponds to an 
instanton bundle. In addition, every $[E]\in\cali$ satisfies 
$\h^1(\lend(E))=37$; this was originaly proved by Katsylo 
and Ottaviani for instanton bundles \cite{KO}, and by Rao
for the cohomologies of monads of the form 
(\ref{invariant021}) \cite[Section 3]{Rao1987}. Therefore, 
we also conclude that $\cali$ is nonsingular. This completes 
the proof of the first part of the Main Theorem.

Our next step is to analyse those bundles with Atiyah--Rees 
invariant equal to 1.

Hartshorne proved in \cite[Theorem 9.9]{Hartreflexive} that 
the family $\calk$ of stable rank 2 bundles $E$ with $c_1(E)=0$ 
and $c_2(E)=5$ whose spectrum is $(-2,-1,0,1,2)$ is an 
irreducible, nonsigular family of dimension 40, and from the
definition of spectrum one has
\begin{equation}\label{h1=3}
h^1(\cale(-2))=3,\ \ \ \ \ \ \ \ \ [\cale]\in\calk.
\end{equation}
The bundles from $\calk$ are precisely those given as 
cohomologies of monads of the form (\ref{invariant111}), cf. 
\cite[Table 5.3, case 5.(4)]{hartshorne1991}, which is a 
particular case of a class of monads studied by Ein in 
\cite{Ein}. It is shown in \cite{Ein} that the closure 
$\overline{\calk}$ of $\calk$ in $\calb(5)$ is an irreducible 
component of $\mathcal{B}(5)$ of dimension 40.
  
We proved in Main Theorem \ref{MT1}, case $a=2$, that the 
bundles arising as cohomology of monads of the form 
(\ref{invariant121}) form a dense subset $\calg(2,1)$ of a 
rational irreducible component of dimension 37. Consider the set
$\calh$ of bundles arising as cohomology of monads of the form 
(\ref{invariant131}). Since the bundles from $\calg(2,1)\cup\calh
$ have the spectrum $(-1,0,0,0,1)$ by \cite[Table 5.3, case 
5.(2)]{hartshorne1991}, by we have (cf. \eqref{h1=0,1})
\begin{equation}\label{h1=1}
h^1(\cale(-2))=1,\ \ \ \ \ \ \ \ \ \ [\cale]\in\calg(2,1)\cup
\calh,
\end{equation}
so that $\alpha(\cale)=1$, and therefore, in view of \eqref{inst 
comp}, $\calh\cap\cali=\emptyset$. Since, by Theorem 
\ref{Cal H}, $\calh$ does not constitute a component in $\calb(5)
$, it then follows from the above that $\calh\subset\overline{
\calg(2,1)}\cup\overline{\calk}$.
\begin{Prop}
$\mathcal{H}\subset\overline{\mathcal{G}(2,1)}$ and 
$\overline{\calk}=\calk$.
\end{Prop}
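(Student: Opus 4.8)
The plan is to exhibit each bundle in $\mathcal{H}$ as a flat limit of bundles belonging to $\mathcal{G}(2,1)$. Let $\mathcal{M}$ be the variety parametrising monads of the form (\ref{invariant13}) whose cohomology is a stable rank $2$ bundle; since the conditions that $\alpha$ be a subbundle inclusion, that $\beta$ be surjective, and that the cohomology be stable are all open, $\mathcal{M}$ is an open subvariety of the affine scheme cut out by the quadratic equation $\beta\alpha=0$ in the relevant space of matrices of homogeneous forms, and it carries a classifying morphism $c\colon\mathcal{M}\to\calb(5)$. Let $\mathcal{M}'\subseteq\mathcal{M}$ be the open subset where the induced morphism $\tilde{\alpha}$ of (\ref{tilde alpha}) is non-zero. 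By the first half of the proof of Proposition \ref{M2}, $c(\mathcal{M}')=\mathcal{G}(2,1)$, whereas $\mathcal{H}\subseteq c(\mathcal{M}\setminus\mathcal{M}')$, because a bundle whose minimal Horrocks monad has the shape (\ref{invariant13}) has $\tilde{\alpha}=0$ for that monad. Thus it suffices to prove that $\mathcal{M}\setminus\mathcal{M}'$ is contained in the closure of $\mathcal{M}'$: then, $c$ being a morphism, $\mathcal{H}\subseteq c(\mathcal{M}\setminus\mathcal{M}')\subseteq c(\overline{\mathcal{M}'})\subseteq\overline{c(\mathcal{M}')}=\overline{\mathcal{G}(2,1)}$.

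Two ingredients feed into this. First, $\mathcal{M}'\neq\emptyset$: starting from any $E'\in\mathcal{G}(2,1)$ with its minimal monad $0\to\op3(-2)\oplus\op3(-1)\to 6\cdot\op3\to\op3(1)\oplus\op3(2)\to0$, the direct sum of this monad with the two trivial monads $0\to\op3(-1)\xrightarrow{\ \mathbf{1}\ }\op3(-1)\to0\to0$ and $0\to0\to\op3(1)\xrightarrow{\ \mathbf{1}\ }\op3(1)\to0$ is a monad of the form (\ref{invariant13}) with the same cohomology $E'$, and whose induced morphism $\tilde{\alpha}$ is the identity on the added copy of $\op3(-1)$, hence non-zero; so this monad lies in $\mathcal{M}'$. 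Second — and this is the crucial point — I would show that every $M_0\in\mathcal{M}\setminus\mathcal{M}'$ admits a one-parameter deformation inside $\mathcal{M}$ leaving $\mathcal{M}\setminus\mathcal{M}'$: one solves the linearised monad equation $\beta\phi+\psi\alpha=0$ for a first-order deformation $(\phi,\psi)$ of $(\alpha,\beta)$ whose $2\cdot\op3(-1)\to\op3(-1)$ block is non-zero, the relevant obstruction being an $\Ext$-type group that vanishes by the cohomological vanishings collected in Lemma \ref{okonek} and in Section 3 for the summands appearing in (\ref{invariant13}); thus $\mathcal{M}$ is smooth at $M_0$ and the first-order deformation integrates to an arc $T\subseteq\mathcal{M}$ through $M_0$ with $T\setminus\{M_0\}\subseteq\mathcal{M}'$.

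With such an arc in hand, $c|_T\colon T\to\calb(5)$ sends $T\setminus\{M_0\}$ into $\mathcal{G}(2,1)$ — by the analysis of the case $\tilde{\alpha}\neq0$ in the proof of Proposition \ref{M2}, which reduces the monad to an anti-self-dual monad of the form (\ref{Monad3}) with a rank $4$ instanton bundle of charge $1$, using Corollary \ref{Okoneksimpletico}, Lemma \ref{equivalence2} and Proposition \ref{bijection} — while it sends $M_0$ to $c(M_0)$; hence $c(M_0)\in\overline{\mathcal{G}(2,1)}$, and letting $M_0$ run over $\mathcal{M}\setminus\mathcal{M}'$ yields $\mathcal{H}\subseteq\overline{\mathcal{G}(2,1)}$. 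The main obstacle is the deformation-theoretic step of the previous paragraph: one must verify that the monad variety $\mathcal{M}$ is smooth along $\mathcal{M}\setminus\mathcal{M}'$ and that there is a first-order deformation there with non-vanishing $\tilde{\alpha}$-block — equivalently, that the ``new'' bundles in $\mathcal{H}$ genuinely deform to bundles whose Horrocks monad is smaller. This is a concrete cohomological computation with the maps of the monad (\ref{i=2}), and it is precisely where the vanishing theorems of Sections 2 and 3 do the work.
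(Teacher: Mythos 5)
Your strategy is genuinely different from the paper's, and its load-bearing step is left unproved. Everything reduces, as you say yourself, to showing that $\mathcal{M}\setminus\mathcal{M}'$ lies in the closure of $\mathcal{M}'$, i.e.\ that a monad of type (\ref{invariant13}) with $\tilde{\alpha}=0$ deforms inside the monad variety to one with $\tilde{\alpha}\neq0$. You assert that the relevant obstruction group vanishes ``by the cohomological vanishings collected in Lemma \ref{okonek} and in Section 3,'' but those vanishings do not apply here: for the monad (\ref{invariant13}) one has $A=\op3(-2)\oplus2\cdot\op3(-1)$ and $B=\op3(-1)\oplus6\cdot\op3\oplus\op3(1)$, so $\Hom(B,A)\supseteq\Hom(\op3(-1),2\cdot\op3(-1))\neq0$ and the hypotheses of Lemma \ref{okonek} fail for exactly this monad type. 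In particular neither the smoothness of $\mathcal{M}$ along $\{\tilde{\alpha}=0\}$ nor the existence of a first-order deformation with non-vanishing $\tilde{\alpha}$-block that integrates (the monad equation $\beta\alpha=0$ is quadratic, so integrability is not automatic) is established; a priori a whole component of $\mathcal{M}$ could sit inside $\{\tilde{\alpha}=0\}$. This is a genuine gap, not a routine verification.

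For comparison, the paper avoids any explicit deformation. It first bounds $\dim\bigl(\calh\setminus(\calg(2,1)\cap\calh)\bigr)\le36$ (Proposition \ref{M2}); since every irreducible component of $\calb(5)$ has dimension at least $8c_2-3=37$ and, by Hartshorne--Rao, every bundle in $\calb(5)$ comes from one of the five listed monads, $\calh$ cannot contribute a new component and must lie in the closure of one of the known families. The Atiyah--Rees invariant ($\alpha(E)=1$ on $\calh$) excludes the instanton component, and semicontinuity of $h^1(E(-2))$ excludes the Ein component, since $h^1(E(-2))\ge3$ on $\overline{\cale}$ while the display of (\ref{invariant13}) gives $h^1(E(-2))=1$. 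That leaves $\overline{\calg(2,1)}$. If you want to keep your direct approach, you must actually carry out the tangent-space and integrability computation for the monad variety at a point with $\tilde{\alpha}=0$; as written, the argument does not close.
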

\begin{proof}
We only have to show that $(\calg(2,1)\cup\calh)\cap\overline{
\calk}=\emptyset$. Suppose by contradiction that there exists a 
vector bundle $[\cale]\in(\calg(2,1)\cup\calh)\cap\overline{
\calk}$. By \eqref{h1=3} and the inferior semi-continuity of the 
dimension of the cohomology groups of coherent sheaves, one has 
that $h^1(\cale(-2))\geq 3$, contrary to \eqref{h1=1}.
\end{proof}
This last proposition finally concludes the proof of Main 
Theorem \ref{MT2}. ~\hfill$\Box$

We summarize all the information in the Main Theorem \ref{MT2}, 
and the discrete invariants of stable rank 2 bundles with 
$c_1=0$ and $c_2=5$ in the following table. 

\begin{table}[h]
\centering
\caption{Irreducible components of $\calb(5)$} 
\begin{tabular}{|c|c|c|c|c|}
\hline
\textbf{Component}                                           
& \textbf{Dimension}  & \textbf{Monads} & 
\textbf{Spectra}              & 
$\mathbf{\alpha}$-\textbf{invariant} \\ \hline
\multirow{2}{*}{\textbf{Instanton}}                          
& \multirow{2}{*}{37} & (\ref{invariant011}) & 
(0,0,0,0,0)                   & \multirow{2}{*}{0}   \\ 
\cline{3-4}
                                                             
&                     & (\ref{invariant021}) & 
(-1,-1,0,1,1)                 &                      \\ 
\hline
\textbf{Ein}                                                 
& 40                  & (\ref{invariant111}) & 
(-2,-1,0,1,2)                 & 1                    \\ 
\hline
\multirow{2}{*}{\textbf{\begin{tabular}[c]{@{}c@{}}Modified\\
Instanton\end{tabular}}} & \multirow{2}{*}{37} & 
(\ref{invariant121}) & \multirow{2}{*}{(-1,0,0,0,1)} & 
\multirow{2}{*}{1}   \\ \cline{3-3}
                                                             
&                     & (\ref{invariant131})   
&                               &                      \\ 
\hline
\end{tabular}
\end{table}

\textbf{Remark.} Inspired by the techniques introduced in the present 
paper, the authors of \cite{Tikhomirov3} construct another infinite 
series of irreducible components of $\mathcal{B}(0,n)$ whose special 
point corresponds to a bundle obtained as the cohomology of a monad 
similar to the one in display (24), just substituting a direct sum of 
two rank 2 instantons bundles for the rank 4 instanton bundle of 
charge 1 in middle term.

\end{document}